
\documentclass[11pt]{amsart}
\usepackage{url,enumitem, amsopn}
\usepackage{xcolor}
\usepackage{cleveref}

\newcommand{\cal}{\mathcal}

\newcommand{\cv}{\mbox{\rm conv}}
\newcommand{\reals}{\mbox{$\mathbb R$}}
\newcommand{\comps}{\mbox{$\mathbb C$}}
\newcommand{\rats}{\mbox{$\mathbb Q$}}
\newcommand{\nats}{\mbox{$\mathbb N$}}
\newcommand{\ints}{\mbox{$\mathbb Z$}}

\newcommand{\pyr}{{\mathop{\mathrm{Pyr}}\nolimits}}

\newcommand{\m}{{\mathop{\mathbf{m}}\nolimits}}

\newcommand{\1}{{\mathop{\mathbf{1}}\nolimits}}
\newcommand{\id}{{\mathop{\mathrm{ID}}\nolimits}}

\newcommand{\comment}[1]{}



\setlength{\textwidth}{6.5in}  
\setlength{\textheight}{8.6in} 
\setlength{\oddsidemargin}{0in} 
\setlength{\evensidemargin}{0in} 
\setlength{\headheight}{0in}
\setlength{\headsep}{10pt}
\setlength{\topmargin} {0.5in} 
\setlength{\itemsep}{0in}
\setlength{\footskip}{0.4in}

\pagestyle{plain}
\usepackage{amsmath, amsfonts, amssymb, latexsym, stmaryrd}
\usepackage{graphicx} 


\def\squarebox#1{\hbox to #1{\hfill\vbox to #1{\vfill}}}
\def\qed{\hspace*{\fill}
        \vbox{\hrule\hbox{\vrule\squarebox{.667em}\vrule}\hrule}\smallskip}

\theoremstyle{plain}
\newtheorem{lemma}{Lemma}[section]
\newtheorem{theorem}[lemma]{Theorem}
\newtheorem{corollary}[lemma]{Corollary}
\newtheorem{proposition}[lemma]{Proposition}
\theoremstyle{definition}
\newtheorem{claim}[lemma]{Claim}
\newtheorem{observation}[lemma]{Observation}
\newtheorem{definition}[lemma]{Definition}

\newtheorem{question}[lemma]{Question}
\newtheorem{example}[lemma]{Example}

\newtheorem*{rmk*}{Remark}
\newtheorem*{rmks*}{Remarks}
\newtheorem*{conventions*}{Conventions}
\newtheorem*{convention*}{Convention}
\newtheorem*{example*}{Example}

\def\squareforqed{\hbox{\rlap{$\sqcap$}$\sqcup$}}
\def\qed{\ifmmode\squareforqed\else{\unskip\nobreak\hfil
\penalty50\hskip1em\null\nobreak\hfil\squareforqed
\parfillskip=0pt\finalhyphendemerits=0\endgraf}\fi}

\newlength{\tablength}
\newlength{\spacelength}
\settowidth{\tablength}{\mbox{\ \ \ \ \ \ \ \ }}
\settowidth{\spacelength}{\mbox{\ }}

\newcommand{\tabstar}{\hspace*{\tablength}}
\newcommand{\spacestar}{\hspace*{\spacelength}}
\def\obeytabs{\catcode`\^^I=\active}
{\obeytabs\global\let^^I=\tabstar}
{\obeyspaces\global\let =\spacestar}
\newenvironment{display}{\begingroup\obeylines\obeyspaces\obeytabs}{\endgroup}
\newenvironment{prog}{\begin{display}\parskip0pt\sf}{\end{display}}

\author{Geir Agnarsson}
\address{Department of Mathematical Sciences
  \\ George Mason University \\ Fairfax, VA  22030}
\email{geir@math.gmu.edu}

\author{Jim Lawrence}
\address{Department of Mathematical Sciences
  \\ George Mason University \\ Fairfax, VA  22030}
\email{lawrence@gmu.edu}

\title{Minkowski ideals and rings}

\subjclass[2010]{13B25, 13C05, 52B11}
\keywords{Polytopes, Minkowski ring, Minkowski ideal.}

\date{\today}

\begin{document}

\begin{abstract}
  \emph{Minkowski rings} are certain rings of simple functions on
  the Euclidean space $W = {\reals}^d$
  with multiplicative structure derived from Minkowski addition of convex
  polytopes.  When the ring is (finitely) generated by a set ${\cal{P}}$
  of indicator functions of $n$ polytopes then the ring can be presented
  as ${\comps}[x_1,\ldots,x_n]/I$ when viewed
  as a ${\comps}$-algebra, where $I$ is the ideal describing all the relations
  implied by identities among Minkowski sums of elements of ${\cal{P}}$.
  We discuss in detail
  the $1$-dimensional case, the $d$-dimensional box case and the affine
  Coxeter arrangement in ${\reals}^2$ where the convex sets are formed
  by closed half-planes with bounding lines making the regular triangular
  grid in ${\reals}^2$.
  We also consider, for a given polytope $P$, the Minkowski ring
  $M^\pm_F(P)$ of the collection ${\cal{F}}(P)$
  of the nonempty faces of $P$ and their multiplicative inverses.
  Finally we prove some general properties of identities
  in the Minkowski ring of ${\cal{F}}(P)$; in particular, we show that
  Minkowski rings behave well under Cartesian product, namely that
  $M^\pm_F(P\times Q)
  \cong M^{\pm}_F(P)\otimes M^{\pm}_F(Q)$
  as ${\comps}$-algebras where $P$ and $Q$ are polytopes.
\end{abstract}

\maketitle

\section{Introduction}
\label{sec:intro}

Minkowski rings capture the algebraic relations that convex polytopes and
their Minkowski sums have. The polytopes are here assumed to be embedded
in a given Euclidean space $W = {\reals}^d$. Recall that for nonempty
convex polytopes $P$ and $Q$ of $W$ their \emph{Minkowski sum}
is the set
$P + Q = \{\tilde{p} + \tilde{q} : \tilde{p}\in P, \tilde{q}\in Q\}$ which
is also a convex polytope of $W$. If ${\cal{S}}$ is the additive
group of functions on $W$ generated by the indicator functions
$[P]$ of convex polytopes $P$, then a  well-defined product $\cdot$
on ${\cal{S}}$ can be obtained by letting $[P]\cdot[Q] = [P+Q]$ for
every pair of nonempty convex polytopes $P$ and $Q$. In this way ${\cal{S}}$
becomes a commutative ring and its multiplicative identity is
${\1}_{\tilde{0}} = [\tilde{0}]$, the indicator function of the origin
$\tilde{0}\in W$. The structure of ${\cal{S}}$ as a ring has been
studied by several authors, e.~g.,
\cite{Groemer, McMullen, Lawrence, Jay-Klaus, Morelli} and the term
\emph{Minkowski ring} for such a commutative ring was coined
in~\cite{Lawrence, Jay-Klaus}. For general properties of Minkowski rings,
their relations to similar Adams-Minkowski rings, $\lambda$-rings that
stem from $K$-theory and relevant references, we refer to the
introduction of the recent paper of~\cite{A-Lawrence}.

In this paper we study some explicit Minkowski rings of some common
classes of convex polytopes, especially those classes that are closed
under taking Minkowski sums. This paper can be viewed as a self-contained
continuation and an extended addendum to~\cite{A-Lawrence}. 

The rest of the paper is organized as follows:

In Section~\ref{sec:defs-obs}
we review some useful properties of Minkowski rings
in general. In particular we review properties of Minkowski ideals of both
polynomial rings $R = {\comps}[x_1,\ldots,x_n]$ and Laurent polynomial
rings $R^{\pm} =  {\comps}[x_1,\ldots,x_n]^{\pm}
= {\comps}[x_1^{\pm 1},\ldots,x_n^{\pm 1}]$.

In Section~\ref{sec:simple} we compute explicitly the Minkowski ring
of a single convex polytope contained in $W=\reals^d$
and, when $d=1$, the Minkowski ring of a closed interval and its endvertices.
 This latter case will demonstrate that the structure
of the Minkowski ring determined by a polytope and its faces
is not entirely determined by the combinatorics of the polytope.
We conclude this section with a corollary describing the Minkowski ring
of $d$-dimensional boxes whose vertices have integer coordinates. 

In Section~\ref{sec:Coxeter} we discuss in detail the Minkowski ring
of convex polygons in the Euclidean plane ${\reals}^2$ that
are bounded by lines that are horizontal or slanted with slopes of
$\pm\sqrt{3}$ that form the regular triangular grid of ${\reals}^2$.
Such a discrete collection of lines can be viewed as a
Coxeter arrangement of hyperplanes in the plane.
It turns out that the Minkowski ring for
these bounded closed sets in ${\reals}^2$ is finitely generated, where
all the relations stem from products of binomials, differences of monomials;
see Theorem~\ref{thm:A}. Further, we study one type of the occurring
relations and determine exactly when
such a relation is a defining relation for the Minkowski ring,
see Corollary~\ref{cor:minimal} and Proposition~\ref{prp:indicator-iff}.

In Section~\ref{sec:Cartesian} we show that the Minkowski ring construction
of polytopes and their faces
behaves well under Cartesian products. Explicitly, we
show that the Minkowski ring of the Cartesian product of two polytopes
and all of its faces, when viewed as an algebra over the complex number
field ${\comps}$, is the tensor product of the Minkowski rings of each
of the polytopes in the Cartesian product over ${\comps}$, as stated
in the main result of the section, Theorem~\ref{thm:Cart-isom}.

Finally, in Section~\ref{sec:summary} we recap our main results
and present a few questions that are relevant to our results, for further
study.

\section{Definitions and observations}
\label{sec:defs-obs}

Any set of real valued functions on the real Euclidean space $W = {\reals}^d$
generates an additive group in the usual pointwise way by letting
$(f+g)(\tilde{x}) = f(\tilde{x}) + g(\tilde{x})$ for any $\tilde{x}\in W$.
In particular, if one starts with a set of characteristic functions,
or indicator functions, of a collection of subsets of $W$, then we obtain
a corresponding subgroup of the group ${\cal{S}}$ of all functionals on $W$.

{\sc Convention:} For a set $S\subseteq W$ its characteristic function,
or indicator function, $W\rightarrow \comps$ is denoted by ${\1}_S$ or $[S]$.

Note that the collection ${\cal{P}}(W)$ of closed convex polyhedra of $W$ is
closed under taking Minkowski sums. Hence, if $M({\cal{P}}(W))$ denotes
the additive subgroup generated by indicator functions of all the sets in
${\cal{P}}(W)$ then for any $A,B\in {\cal{P}}(W)$ the indicator
function $[A+B]$ of the Minkowski sum of $A$ and $B$ is also contained
in $M({\cal{P}}(W))$. Defining a multiplication by
$[A]\cdot [B] := [A+B]$ when $A, B \in \cal{P}$ and extending linearly
makes $M({\cal{P}}(W))$ a commutative ring
we call the {\em Minkowski ring} of all closed convex polyhedra of $W$.
It is a  ${\ints}$-algebra. The Minkowski ring $M({\cal{P}}(W))$
is large, to say the least, as it is generated by all closed convex polyhedra 
of $W$. As such one has a presentation
\[
M({\cal{P}}(W))
\cong {\ints}[x_P : P\subseteq W\mbox{ is a closed convex polyhedron}]/I
\]
as $\ints$-algebras, where $I$ is an ideal of
${\ints}[x_P : P\subseteq W\mbox{ is a closed convex polyhedron}]$ capturing all
$\ints$-algebraic relations among the sets.
Finitely generated ${\ints}$-sub-algebras of 
$M({\cal{P}}(W))$ have some convenient algebraic properties like
Noetherianity and such. Hence, it makes sense to consider smaller
subrings of $M({\cal{P}}(W))$, in particular
(a) those generated by indicator functions of collections of certain
types of polytopes that are closed under taking Minkowski sums, and
(b) those that are finitely generated.
\begin{definition}
\label{def:Minks-poly}
For any set ${\cal{D}}$ of closed convex polyhedra of $W$ let $M({\cal{D}})$
be the Minkowski ring generated by all the indicator functions of
 sets from ${\cal{D}}$.  The indicator function of any nonempty convex polytope
has an inverse in $M(\cal P(W))$, and, when $\cal D$ 
consists of nonempty convex polytopes,
$M^{\pm}({\cal{D}})$ will denote the Minkowski ring generated by all the
indicator functions of
sets from ${\cal{D}}$ and their multiplicative inverses.
In particular,
$M({\cal{P}}(W))$ is the Minkowski ring of all closed convex
polyhedra in $W$.

For a finite collection $\{P_1,\ldots,P_n\}$ of convex
polytopes of $W$ we write $M(P_1,\ldots,P_n)$ for the ring
$M(\{P_1,\ldots,P_n\})$ and $M^{\pm}(P_1,\ldots,P_n)$ for the ring
$M^{\pm}(\{P_1,\ldots,P_n\})$.
\end{definition}
\begin{example}
\label{exa:closed}
Note that
if $A = \{(x,y)\in {\reals}^2 : x > 0 \mbox{ and } y\geq 1/x\}$
and $B = \{(x,0)\in {\reals}^2 : x\in {\reals}\}$; the x-axis,
then both $A$ and $B$ are closed sets but their Minkowski sum
$A+ B$ is not closed in ${\reals}^2$. Hence, we cannot equip
a ring structure to the collection of all closed sets of $W$ in
the same way that is done for ${\cal{P}}(W)$.

Also observe that convexity plays an important role in the structure
of Minkowski rings.  The Minkowski sum generalizes to a sum on
arbitrary sets in a vector space by the same formal definition, $A+B = \{a+b : a\in A, b\in B\}.$
Although the indicator function $F$ of the union
of two disjoint compact intervals of real numbers is in the Minkowski
ring of convex polytopes, its square in the Minkowki ring is not equal
to the indicator function of the (generalized) sum of this set with
itself (as is the case for indicator functions of convex polytopes).
\end{example}
Note that we clearly have that
\[
M(P_1,\ldots,P_n) = {\ints}[[P_1],\ldots,[P_n]] =
{\ints}[{\1}_{P_1},\ldots,{\1}_{P_n}].
\]
as ${\ints}$-algebras. As we have a natural $\ints$-algebra surjection 
$\phi : {\ints}[x_1,\ldots,x_n]\twoheadrightarrow M(P_1,\ldots,P_n)$ given
$\phi(x_i) = {\1}_{P_i}$ 
for each $i$ and extended naturally, we obtain
$M(P_1,\ldots,P_n)\cong {\ints}[x_1,\ldots,x_n]/I$ for an ideal
$I = \ker(\phi) \subseteq {\ints}[x_1,\ldots,x_n]$ and so,
by tensoring with $\comps$, we obtain 
\[
M(P_1,\ldots,P_n)\otimes_{\ints}{\comps}
\cong {\comps}[x_1,\ldots,x_n]/{\overline{I}}
\]
where $\overline{I} = I\otimes_{\ints}{\comps}$ is the corresponding extended
ideal. Because of this, we will view $M(P_1,\ldots,P_n)$ as a
$\comps$-algebra
and the ideal $I = \ker(\phi)$ as its extended ideal $\overline{I}$
and as a vector space
over $\comps$ rather than just a $\ints$-module. The prime ideal structure
of such rings was studied in~\cite{Jay-Klaus}.

When working with the Laurent polynomial ring
$R^{\pm}= \comps[x_1,\ldots,x_n]^{\pm} = \comps[x_1^{\pm 1},\ldots,x_n^{\pm 1}]$
in the variables $x_1,\ldots,x_n$ and their multiplicative inverses
$x_1^{-1},\ldots,x_n^{-1}$, we obtain the corresponding isomorphism
$M^{\pm}(P_1,\ldots,P_n)\cong {\comps}[x_1,\ldots,x_n]^{\pm}/I'$ where
$I'$ is obtained from $I$ by removing all generators consisting of monomials
in $x_1,\ldots,x_n$. When there is no danger of ambiguity we will denote
the ideal $I'$ by $I$ as for the polynomial ring. It should be clear
by the context whether we are dealing with ideals of the polynomial ring $R$
or the Laurent polynomial ring $R^{\pm}$.

{\sc Convention:} From now on we will view our Minkowski rings
$M(P_1,\ldots,P_n)$ and their corresponding Laurent rings
$M^{\pm}(P_1,\ldots,P_n)$ as algebras of the complex number field
${\comps}$.
\begin{definition}
\label{def:Minks-ideal}
An ideal $I$ of $R = \comps[x_1,\ldots,x_n]$ (resp.
$R^{\pm} = {\comps}[x_1,\ldots,x_n]^{\pm}$) is a {\em Minkowski ideal}
if there are polytopes $P_1,\ldots,P_n$ such that $I = \ker(\phi)$ 
where $\phi : \comps[x_1,\ldots,x_n] \twoheadrightarrow M(P_1,\ldots,P_n)$
(resp.~$\phi : \comps[x_1,\ldots,x_n]^{\pm}
\twoheadrightarrow M^{\pm}(P_1,\ldots,P_n)$)
is the aforementioned subjective $\comps$-algebra homomorphism onto
the Minkowski ring $M(P_1,\ldots,P_n)$ (resp. $M^{\pm}(P_1,\ldots,P_n)$).
\end{definition}

For a polynomial
$f = f(\tilde{x}) = f(x_1,\ldots,x_d)\in R = \comps[x_1,\ldots,x_d]$
let $f^{(i)}(\tilde{x}) = f(x_1^i,\ldots,x_d^i)$. Call an ideal $I$ of $R$
{\em power closed} if $f\in I$ implies $f^{(i)}\in I$ for each $i\in\nats$
(see~\cite{A-Lawrence}.) For any nonzero $k\in\reals$
the map $\lambda_K : W \rightarrow W$ defined
by $\lambda_k(C) = kC$ induces an isomorphism
$\tilde{\lambda}_k : M({\cal{P}}(W))\rightarrow M({\cal{P}}(W))$.
When $k\in\nats$
the restriction of $\lambda_k$ to 
any finitely generated $\comps$-algebra $M(P_1,\ldots,P_n)$ 
induces a well defined $\comps$-algebra homomorphism
$M(P_1,\ldots,P_n)\rightarrow M(P_1,\ldots,P_n)$. Viewing
$M(P_1,\ldots,P_n) \cong R/I$ we therefore have the following observation:
\begin{observation}
\label{obs:power-closed}
Any Minkowski ideal of $R=\comps[x_1,\ldots,x_n]$ is power closed.
\end{observation}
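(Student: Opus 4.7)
The plan is to realize the power-substitution operation $f \mapsto f^{(i)}$ on $R = \comps[x_1,\ldots,x_n]$ as the lift of an honest $\comps$-algebra endomorphism of the Minkowski ring $M(P_1,\ldots,P_n)$, namely the one induced by the dilation $\lambda_i$. Once we have a commuting square, closure of the ideal follows from one line of diagram chase.

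First, I would verify that for each positive integer $i\in\nats$, dilation by $i$ really does restrict to an endomorphism of the finitely generated subring $M(P_1,\ldots,P_n)$ (and not merely of the ambient ring $M({\cal P}(W))$). This is the point where the hypothesis $i\in\nats$, as opposed to $i\in\ints$ or $i\in\reals$, enters: for each generator $P_j$, the dilation $iP_j$ equals the $i$-fold Minkowski sum $P_j + \cdots + P_j$, so
\[
\tilde{\lambda}_i({\1}_{P_j}) \;=\; {\1}_{iP_j} \;=\; {\1}_{P_j}\cdot{\1}_{P_j}\cdots {\1}_{P_j} \;=\; {\1}_{P_j}^{\,i},
\]
which certainly lies in $M(P_1,\ldots,P_n)$. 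Thus $\tilde{\lambda}_i$ gives a well-defined $\comps$-algebra endomorphism of $M(P_1,\ldots,P_n)$.

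Next, let $\sigma_i : R \to R$ be the $\comps$-algebra endomorphism of the polynomial ring determined by $\sigma_i(x_j) = x_j^{\,i}$, so that $\sigma_i(f) = f^{(i)}$ by definition. I would then check the commutative square
\[
\phi\circ\sigma_i \;=\; \tilde{\lambda}_i\circ\phi
\]
on the generators: both sides send $x_j$ to ${\1}_{P_j}^{\,i} = {\1}_{iP_j}$, using the computation above for the right-hand side and the definition of $\phi$ for the left. Since all four maps are $\comps$-algebra homomorphisms and agreement on generators of $R$ implies agreement everywhere, the square commutes on all of $R$.

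The conclusion is immediate: for any $f\in I = \ker\phi$,
\[
\phi\bigl(f^{(i)}\bigr) \;=\; \phi\bigl(\sigma_i(f)\bigr) \;=\; \tilde{\lambda}_i\bigl(\phi(f)\bigr) \;=\; \tilde{\lambda}_i(0) \;=\; 0,
\]
so $f^{(i)}\in I$, as desired. There is no real obstacle here; the only subtlety worth flagging is the need to explicitly check that $\tilde{\lambda}_i$ restricts to the finitely generated subring, since the statement in the paper defining $\tilde{\lambda}_k$ is phrased for the full Minkowski ring $M({\cal P}(W))$.
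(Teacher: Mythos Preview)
Your proof is correct and follows essentially the same approach as the paper: the paper's argument is the paragraph immediately preceding the observation, which asserts that the dilation $\tilde{\lambda}_k$ restricts to an endomorphism of $M(P_1,\ldots,P_n)$ when $k\in\nats$ and then views $M(P_1,\ldots,P_n)\cong R/I$. You have simply spelled out the commuting-square argument and the key identity $\tilde{\lambda}_i({\1}_{P_j}) = {\1}_{P_j}^{\,i}$ that the paper leaves implicit.
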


Similarly, for the Laurent polynomial ring
$R^{\pm}= \comps[x_1,\ldots,x_d]^{\pm} = \comps[x_1^{\pm 1},\ldots,x_d^{\pm 1}]$
in the variables $x_1,\ldots,x_n$ and their multiplicative inverses
$x_1^{-1},\ldots,x_n^{-1}$ we have the following~\cite{A-Lawrence}.
\begin{observation}
\label{obs:power-closed-Laurant}
Any Minkowski ideal of $R^{\pm} =\comps[x_1,\ldots,x_n]^{\pm}$ is power closed.
In fact, if $I\subseteq R^{\pm}$ is a Minkowski ideal, then $f\in I$
implies $f^{(i)}\in I$ for each $i\in\ints$. 
\end{observation}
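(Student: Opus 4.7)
My plan is to mirror the proof of Observation~\ref{obs:power-closed} for nonnegative integer scalings, and to handle negative integers by exhibiting an inversion automorphism of the target ring.

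For $i\in\nats$, the scaling $\lambda_i$ extends to the $\comps$-algebra automorphism $\tilde{\lambda}_i$ of $M({\cal{P}}(W))$, and this in fact restricts to an endomorphism of $M^{\pm}(P_1,\ldots,P_n)$, since $\tilde{\lambda}_i([P_j]) = [iP_j] = [P_j]^i$ and $\tilde{\lambda}_i([P_j]^{-1}) = [P_j]^{-i}$ both lie in $M^{\pm}(P_1,\ldots,P_n)$. Letting $\tau_i : R^{\pm} \to R^{\pm}$ denote the substitution $x_j \mapsto x_j^i$, checking on generators yields $\tilde{\lambda}_i \circ \phi = \phi \circ \tau_i$, so any $f \in \ker\phi = I$ satisfies $\phi(\tau_i(f)) = 0$, that is, $f^{(i)} \in I$. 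The case $i=0$ is handled by the same commutative-square argument with $\tilde{\lambda}_0$, which is the Euler-characteristic ring homomorphism $[P]\mapsto 1$ on nonempty polytopes, forcing $f(1,\ldots,1)=0$ and so $f^{(0)} \in I$.

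For $i<0$, the direct scaling approach breaks down, since $\tilde{\lambda}_{-1}([P_j])=[-P_j]$ is typically not a Laurent polynomial in the $[P_l]$'s and thus does not lie in $M^{\pm}(P_1,\ldots,P_n)$. Instead I would produce a $\comps$-algebra automorphism $\alpha$ of $M^{\pm}(P_1,\ldots,P_n)$ satisfying $\alpha([P_j]) = [P_j]^{-1}$ (and hence $\alpha([P_j]^{-1}) = [P_j]$). Granted such an $\alpha$, the $R^{\pm}$-automorphism $\sigma : x_j \mapsto x_j^{-1}$ obeys $\alpha \circ \phi = \phi \circ \sigma$, so $\sigma(\ker\phi)=\ker\phi$, which is exactly $f \in I \Rightarrow f^{(-1)} \in I$. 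Composing with the $\tau_i$ for $i\in\nats$ then covers every $i\in\ints$.

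The main obstacle is the well-definedness of $\alpha$: one must verify that every $\comps$-linear identity among Laurent monomials in the $[P_j]$'s remains valid after swapping each $[P_j]$ with $[P_j]^{-1}$. Using the Euler-type identity $[P]^{-1}=\sum_{F\text{ face of }P}(-1)^{\dim F}[-F]$, any such identity can be unpacked into inclusion-exclusion relations among face indicators, and these relations are preserved under the face-by-face reflection built into the inversion formula. This is the key technical step supplied by~\cite{A-Lawrence}, which we invoke here.
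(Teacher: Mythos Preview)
The paper itself offers no proof of this observation; it simply states it with a citation to~\cite{A-Lawrence}, prefaced by the word ``Similarly'' to suggest the scaling argument from Observation~\ref{obs:power-closed} carries over. Your proposal is therefore more detailed than what the paper provides, and it is structurally sound: the positive-$i$ case is handled exactly as the paper does for the polynomial ring, the $i=0$ case via the Euler homomorphism is correct, and you rightly observe that $\tilde\lambda_{-1}$ does \emph{not} restrict to $M^{\pm}(P_1,\ldots,P_n)$ in general (indeed, already for a single interval $P_1=[0,1]$ one checks $[-P_1]\notin M^{\pm}(P_1)$), so a different mechanism is needed for negative $i$.

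One remark on the negative case: the well-definedness of your $\alpha$ is logically equivalent to the assertion $\sigma(I)\subseteq I$, which is exactly the statement for $i=-1$. So your reduction is really ``negative $i$ follows from $i=-1$ together with the positive case,'' and then $i=-1$ is deferred to~\cite{A-Lawrence}. That is legitimate and mirrors the paper's own deferral. The heuristic you sketch in the final paragraph---unpacking via the Euler identity $[P]^{-1}=\sum_F(-1)^{\dim F}[-F]$ and appealing to a ``face-by-face reflection''---is too vague to stand as a proof, but you correctly flag it as the technical content of the citation. The clean way to phrase what~\cite{A-Lawrence} supplies is that the Euler--Schanuel (duality) map $D:[P]\mapsto(-1)^{\dim P}[\mathring P]$ is a ring endomorphism of $M({\cal P}(W))$; composing with $\tilde\lambda_{-1}$ gives a global ring endomorphism sending every $[P]$ to $[P]^{-1}$, which then automatically restricts to $M^{\pm}(P_1,\ldots,P_n)$ and furnishes your $\alpha$.
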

In~\cite[Theorem 5.11]{A-Lawrence} power closed principal ideals
$R=\comps[x_1,\ldots,x_n]$ and $R^{\pm} =\comps[x_1,\ldots,x_n]^{\pm}$
are characterized. From this one can see that for $n=1$ not all power closed
ideals are Minkowski ideals (see~\cite{A-Lawrence}.)

In this paper we are particularly interested in the Minkowski rings of
the collections of closed sets
that are formed by a given collection of polytopes
$P_1,\ldots,P_n$ and all their faces, the arbitrary Minkowski sums of which
 forming a nice closed class of polytopes.

 \section{A few simple cases}
\label{sec:simple}

In this section 
we dispatch some explicit simple cases.
We first consider the Minkowski ring $M(S)$ for a single 
arbitrary closed convex polyhedron $S\subseteq W = {\reals}^d$,
in particular that
when $S = P$, a convex polytope in $W$. Secondly we consider the Minkowski
ring generated by a compact interval of $\reals$ together
with its endpoints.

\subsection{The principal case}
Here we consider the ``principal case'': the Minkowski ring $M(S)$ where
$S$ is a closed convex set. We first look at the case
$S=\emptyset$ and then the case $S\neq\emptyset$ when $d=1$, which will yield
the structure of $M(S)$ for $\emptyset\neq S\subseteq W$ in general.
\begin{example}
\label{exa:s-empty}  
If $S=\emptyset$, then for the surjection
$\phi : {\comps}[x]\twoheadrightarrow M(S) = M(\emptyset)$
given by $\phi(x) = {\1}_{\emptyset} = 0$, we have for $f\in \comps[x]$ that
$\phi(f) = 0$ iff $f(0){\1}_{\{0\}} = f(0){\1}_{M(\emptyset)} = 0$,
that is $f(0) = 0$ in $\comps$. This means exactly that $f\in (x)$,
and hence $\ker(\phi) = (x)$, a monomial ideal generated by $x$.
\end{example}
\begin{example}
  \label{exa:S-zero-elt}
Similarly, if $S = \{0\} = {\1}_{M(\{0\})}$, then for the surjection
$\phi : {\comps}[x]\twoheadrightarrow M(S) = M(\{0\})$
given by $\phi(x) = {\1}_{\{0\}}$, we have for $f\in \comps[x]$ that
$\phi(f) = 0$ iff $f(1){\1}_{\{0\}} = f(1){\1}_{M(\emptyset)} = 0$,
that is $f(1) = 0$ in $\comps$. This means exactly that $f\in (x-1)$,
and hence $\ker(\phi) = (x-1)$, a binomial ideal generated by $x-1$.
\end{example}
We now consider the case when $d=1$. Here a closed
convex set $S$ is a closed nonempty interval of $W = \reals$
and therefore $S$ has one of the following forms:
$S = [a,b]$, where $a\leq b$ are real numbers,
$S = [a,\infty[$,
$S = ]-\infty,b]$, or
$S = ]-\infty, \infty[$. 

Consider the case $S = [a,b]$ where $0<a<b$. The surjection 
$\phi : {\comps}[x]\twoheadrightarrow M(S) = M([a,b])$
given by $\phi(x) = {\1}_{[a,b]}$ satisfies $\phi(x^k) = {\1}_{[ka,kb]}$ for
each integer $k\geq 0$. If there is a nonzero
$f = \sum_{i=0}^Na_ix^i \in \comps[x]$ with $\phi(f) = 0$, then
there is one of least degree $N$. Since 
$\phi(f) = \sum_{i=0}^Na_i{\1}_{[ia,ib]} = 0$ in $M([a,b])$, then
if $\max(Na,(N-1)b) < t < Nb$ we obtain
\[
0 = \left(\sum_{i=0}^Na_i{\1}_{[ia,ib]}\right)(t) = a_N,
\]
contradicting the minimality of the degree of $f$. Hence $f=0$ must
hold and we have $\ker(\phi) = \{0\}$ in this case.
All these cases for which either $a$ or $b$ is nonzero are handled
similarly and for each of these cases we have $\ker(\phi) = \{0\}$.

For the remaining cases, $S$ is a closed interval such that
$tS = S$ for any real $t>0$ and $S\neq \{0\}$, equivalently $2S = S$ and
$S\neq\{0\}$. In this 
case we have for $f\in\comps$ that $\phi(f) = 0$ iff $f(0) = f(1)  = 0$
and so $f\in x(x-1)$. Hence $\ker(\phi) = (x(x-1))$; a principal ideal
generated by a product of a monomial and binomial.

Finally, consider $S\subseteq W = {\reals}^d$. We can use the above
to treat this case in the same way by restricting our attention
to $S\cap L(\tilde{a})$ where $L(\tilde{a}) = \{ t\tilde{a} : t\in\reals\}$
is the one dimensional vector space generated by $\tilde{a}$. Note
that (a) for any sets $A,B\subseteq W$ we have $t(A\cap B) = tA\cap tB$
and that (b) $\{t\in\reals : t\tilde{a} \in S\cap L(\tilde{a})\}$ is a closed
interval of $\reals$. Using these snippets we summarize this in the following.
\begin{observation}
\label{obs:principal}
Let $S\subseteq W = {\reals}^d$ be a closed set. For the surjection
$\phi : {\comps}[x]\twoheadrightarrow M(S)\subseteq M({\cal{P}}(W))$
$\phi(x) = S$ we have $I = \ker(\phi)$ is as follows:

If $S = \emptyset$ then $I = (x)$.

If $S = \{0\}$, then $I = (x-1)$.

If $S \not\in\{\emptyset, \{0\}\}$ and $S = 2S$, in particular for $S = W$,
then $I = (x(x-1))$.

If $S \not\in\{\emptyset, \{0\}\}$ and $S\neq 2S$, in particular for a bounded
convex polytope $S = P$, then $I = (0)$.
\end{observation}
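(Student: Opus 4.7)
The plan is to dispose of the four cases in turn, handling the first three directly and reducing the fourth to the one-dimensional analysis already carried out in the excerpt. Throughout, I would use that for convex $S$ the $i$-fold Minkowski sum $S+\cdots+S$ equals the scalar multiple $iS=\{is:s\in S\}$, so $\phi(x^i)=[iS]$ under the convention $0\cdot S=\{0\}$.

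The cases $S=\emptyset$ and $S=\{0\}$ are Examples~\ref{exa:s-empty} and~\ref{exa:S-zero-elt}: in the first $\phi(f)=f(0)[\{0\}]$, so $\ker\phi=(x)$, and in the second $\phi(f)=f(1)[\{0\}]$, so $\ker\phi=(x-1)$. For the third case, with $S\notin\{\emptyset,\{0\}\}$ and $S=2S$, I would first establish that for closed convex $S$ the equation $S=2S$ is equivalent to $S$ being a closed convex cone with apex at the origin: one direction is immediate, and the other uses that $S=2S$ together with closedness forces $0\in S$ (via the iterates $s\mapsto s/2$), hence $\{2^k s:k\in\ints\}\subseteq S$ for any $s\in S$, and convexity interpolates to $ts\in S$ for all $t\ge 0$. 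With $S$ a cone we have $iS=S$ for every $i\ge 1$, so $\phi(f)=f(0)[\{0\}]+(f(1)-f(0))[S]$; the linear independence of $[\{0\}]$ and $[S]$ (valid because $S\ne\{0\}$) then gives $\ker\phi=(x(x-1))$.

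The fourth case, $S\notin\{\emptyset,\{0\}\}$ with $S\ne 2S$, is the main one. The plan is the reduction already flagged in the excerpt: given a hypothetical $f=\sum a_i x^i$ with $\phi(f)=0$, restrict to the line $L(\tilde a)=\{t\tilde a:t\in\reals\}$ for a suitable $\tilde a\ne 0$. The identity $(iS)\cap L(\tilde a)=i(S\cap L(\tilde a))$ (valid for convex $S$) lets the restricted relation be read as $\sum a_i[iI]=0$ in the one-dimensional Minkowski ring, where $I=\{t\in\reals:t\tilde a\in S\}$ is a closed interval. Provided $I$ is nonempty, distinct from $\{0\}$, and satisfies $I\ne 2I$, the one-dimensional analysis from the excerpt forces every $a_i=0$.

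The main obstacle is producing such a direction $\tilde a$ from the hypothesis $S\ne 2S$, and I would split on whether $0\in S$. If $0\notin S$, any $\tilde a\in S$ works: $1\in I$ but $0\notin I$, so the closed interval $I\subseteq(0,\infty)$ has positive infimum and hence $2I\ne I$. If $0\in S$, note that for convex $S$ containing $0$ one has $S\subseteq 2S$ (since $s/2\in S$ by convexity with $0$, whence $s=2(s/2)\in 2S$); the hypothesis $S\ne 2S$ then yields $s'\in 2S\setminus S$, and writing $s'=2s$ with $s\in S\setminus\{0\}$ produces a point with $2s\notin S$. Taking $\tilde a:=s$, convexity gives $[0,1]\subseteq I$ while $2\notin I$, so the nonnegative portion of $I$ is a bounded interval with upper endpoint in $[1,2)$, again forcing $2I\ne I$. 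In either subcase the one-dimensional analysis closes the argument and yields $\ker\phi=(0)$.
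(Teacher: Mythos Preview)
Your proof is correct and follows the paper's approach: the first two cases are the cited examples, and the key fourth case is reduced to the one-dimensional analysis by restricting to a line $L(\tilde a)$ through the origin via $i(S\cap L(\tilde a))=(iS)\cap L(\tilde a)$. You are in fact more careful than the paper, which simply asserts the reduction without specifying $\tilde a$; your explicit choice of direction (splitting on whether $0\in S$) to guarantee that the slice $I$ satisfies $I\ne 2I$, together with your direct treatment of the cone case $S=2S$ in arbitrary dimension, fills in details the paper leaves implicit.
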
 

Allowing each of the generators $x_i$ to possess a multiplicative inverse,
we obtain the following corollary for the Laurent polynomial ring
${\comps}[x]^{\pm} = {\comps}[x,x^{-1}]$.
\begin{corollary}
\label{cor:principal-Laurent}
Let $S\subseteq W = {\reals}^d$ be a closed set. For the surjection
$\phi : {\comps}[x]^{\pm}\twoheadrightarrow
M^{\pm}(S)\subseteq M^{\pm}({\cal{P}}(W))$
$\phi(x) = S$ we have $I = \ker(\phi)$ is as follows:

If $S = \emptyset$ then $I = R^{\pm}$.

If $S = \{0\}$ or if $S \not\in\{\emptyset, \{0\}\}$ and $S = 2S$,
in particular for $S = W$, then $I = (x-1)$.

If $S \not\in\{\emptyset, \{0\}\}$ and $S\neq 2S$, in particular for a bounded
convex polytope $S = P$, then $I = (0)$.
\end{corollary}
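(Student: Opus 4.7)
The plan is to reduce Corollary~\ref{cor:principal-Laurent} to Observation~\ref{obs:principal} by invoking the principle recorded earlier in Section~\ref{sec:defs-obs}: when one passes from $R=\comps[x]$ to the Laurent ring $R^{\pm}=\comps[x,x^{-1}]$, the Minkowski ideal $I'\subseteq R^{\pm}$ is obtained from the polynomial Minkowski ideal $I\subseteq R$ by deleting every generator that is a pure monomial in $x$. Before doing the case analysis, I would note that for any nonempty closed convex polyhedron $S$ the indicator function $[S]$ is a unit of $M(\cal{P}(W))$, so the surjection $\phi\colon R^{\pm}\twoheadrightarrow M^{\pm}(S)$ with $\phi(x)=[S]$ is well defined whenever $S\neq\emptyset$.

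Next I would walk through the four cases in parallel with Observation~\ref{obs:principal}. When $S=\emptyset$, we have $\phi(x)=0$, while $x$ is required to be a unit of $R^{\pm}$; this forces the quotient to collapse, so equivalently, starting from $I=(x)$ in $R$ and inverting $x$ yields $I=R^{\pm}$. When $S=\{0\}$, Observation~\ref{obs:principal} gives $I=(x-1)$ in $R$, and since $x-1$ is not a monomial it survives the passage, giving $I=(x-1)$ in $R^{\pm}$. When $S\notin\{\emptyset,\{0\}\}$ and $S=2S$, Observation~\ref{obs:principal} gives $I=(x(x-1))$; but $x$ is a unit of $R^{\pm}$, so $(x(x-1))=(x-1)$ in $R^{\pm}$, producing the stated ideal. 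Finally, when $S\notin\{\emptyset,\{0\}\}$ and $S\neq 2S$, we already have $I=(0)$ in $R$, hence also in $R^{\pm}$ since the zero ideal has no monomial generators to drop.

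The only subtle point is the edge case $S=\emptyset$, where the Laurent surjection cannot literally exist in the usual sense (since $\phi(x)=0$ is not a unit); this is precisely what forces $I$ to jump up to the entire ring $R^{\pm}$ rather than merely shrink as in the $S=2S$ case. I do not anticipate any genuine obstacle here: no new geometric input is needed, and the argument is a purely algebraic translation of Observation~\ref{obs:principal} via the monomial-generator bookkeeping explained just before Definition~\ref{def:Minks-ideal}.
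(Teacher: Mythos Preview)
Your proposal is correct and follows exactly the route the paper intends: the paper offers no separate argument beyond the sentence ``Allowing each of the generators $x_i$ to possess a multiplicative inverse, we obtain the following corollary,'' and you have simply spelled out the case-by-case passage from Observation~\ref{obs:principal} to the Laurent setting. Your handling of the $S=2S$ case (noting that $(x(x-1))=(x-1)$ once $x$ is a unit) and the degenerate $S=\emptyset$ case is accurate and in fact more explicit than what the paper provides.
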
 

\subsection{A one dimensional case}

In this subsection we consider the Minkowski subring of
$M({\cal{P}}(\reals))$
generated by a bounded closed convex set of $\reals$, i.e.~a closed interval
$[\alpha,\beta]$
with real endpoints $\alpha,\beta\in\reals$. We will
consider the ring
$R = R_{\alpha,\beta} := M(\{\alpha\},\{\beta\},[\alpha,\beta])$ and
the surjection
$\phi : {\comps}[x,y,z] \twoheadrightarrow R_{\alpha,\beta}$ given by
$\phi(x) = {\1}_{\{\alpha\}}$, $\phi(y) = {\1}_{\{\beta\}}$ and
$\phi(z) = {\1}_{[\alpha,\beta]}$.

When $\alpha = 0$ and $\beta = 1$ we obtain one example of such a
Minkowski ring is $R_{0,1} = M(\{1\}, [0,1])$, the subring of
$M({\cal{P}}(\reals))$ generated by the closed interval
$[0,1]$ and its subfaces $\{0\}$, yielding the unit of the ring, and
$\{1\}$.

In general for $R_{\alpha,\beta} = M(\{\alpha\}, \{\beta\}, [\alpha,\beta])$
we see that the surjection $\phi$ satisfies
$\phi((z - x)(z - y)) = 0$ by direct computation, or
as indicated by~\cite[Lemma 2, p.~10]{Lawrence} which is
a special case of relation from~\cite[Theorem 5, p.~11]{Lawrence}
that always holds in the Minkowski ring.
Namely, $\prod_{v\in P}([P] - [v]) = 0$ always holds
in the Minkowski ring generated by a polytope $P$ and its vertices
$v$. Hence, we always have $(z-x)(z-y)\in\ker(\phi)$ for
any $\alpha,\beta\in {\reals}$. Whether or not
$\ker(\phi) = ((z-x)(z-y))$ depends on $\alpha$ and $\beta$.
Since $z^2 = (x+y)z -xy$ modulo
$\ker(\phi)$, we can view this relation as a reduction in
a Gr\"{o}bner Basis for the ideal $\ker(\phi)$ w.r.t.~the lexicographical
term order (LEX) where $x\prec y\prec z$ (or equivalently the
degree lexicographical order (DEGLEX)). Hence, any element
in ${\comps}[x,y,z]/\ker(\phi)\cong R_{\alpha,\beta}$
can be represented as $f(x,y)z + g(x,y)$ for some polynomials
$f$ and $g$ in two variables over ${\comps}$,
where each can, in some cases, be reduced further
as we will discuss here below.

\subsubsection{(A) $0 = \alpha < \beta$:}
\label{ssec:0b}
In this case $\phi(x) = {\1}_{\{0\}}$; the unit, and so 
$R_{0,\beta} = M(\{\beta\},[0,\beta])$ is a homomorphic image of
${\comps}[y,z]$ and
$(z-x)(z-y) = (z-1)(z-y)\in\ker(\phi)\subseteq {\comps}[y,z]$.
As a result any reduced element in ${\comps}[y,z]/\ker(\phi)\cong R_{0,\beta}$
has a representation as $f(y)z + g(y)$ where $f$ and $g$ are polynomials
in one variable over ${\comps}$. Assume that $f(y)z + g(y)\in \ker(\phi)$
where $f(t) = \sum_{i=1}^ka_it^i$ and $g(t) = \sum_{i=1}^hb_it^i$. By
definition we then obtain
\[
  {\1}_{\emptyset} = 
  \sum_{i=1}^ka_i[[i\beta,(i+1)\beta]] + \sum_{i=1}^hb_i[\{i\beta\}]  = 
  \sum_{i=1}^ka_i{\1}_{[i\beta,(i+1)\beta]} + \sum_{i=1}^hb_i{\1}_{\{i\beta\}}.
\]
Evaluating at $t = (i + 1/2)\beta$ for $i = 0,1,2,\ldots$ we obtain
$a_i = 0$ for each $i$ and then evaluating at each $t = i\beta$ we obtain
$b_i = 0$ for each $i$. This shows that $\ker(\phi) = ((z-1)(z-y))$.
Relabeling the variables for this case we have in this case the following
\begin{equation}
\label{eqn:0ab}
R_{0,\beta} = M(\{\beta\},[0,\beta]) \cong {\comps}[x,y]/((y-1)(y-x)).
\end{equation}
The case where $\beta <0$ is handled in the same way and we have
(\ref{eqn:0ab}) also when $\beta < 0$. 

\subsubsection{(B) $0 < \alpha < \beta$ and $\alpha/\beta\not\in {\rats}$:}
Here the surjection
$\phi : {\comps}[x,y,z] \twoheadrightarrow R_{\alpha,\beta}$ is given by
$\phi(x) = {\1}_{\{\alpha\}}$, $\phi(y) = {\1}_{\{\beta\}}$ and
$\phi(z) = {\1}_{[\alpha,\beta]}$. As mentioned above each element has
a reduced form $f(x,y)z + g(x,y)$ for some polynomials
$f = \sum_{i,j}a_{i\/j}x^iy^j$ and $g = \sum_{i,j}b_{i\/j}x^iy^j$
in two variables over ${\comps}$. Assume that $f = 0$ and that
$g(x,y) = \sum_{i,j}b_{i\/j}x^iy^j\in \ker(\phi)$. Again by mere definition
we obtain that
\[
  {\1}_{\emptyset}
  = \sum_{i,j}b_{i\/j}[[i\alpha + j\beta]]
  = \sum_{i,j}b_{i\/j}{\1}_{[i\alpha + j\beta]}.
\]
Since $\alpha/\beta\not\in{\rats}$, no two distinct linear combinations
$i\alpha + j\beta$ are identical as real numbers
and hence evaluating the above characteristic
function at each of these unique linear combinations, we get that
$b_{i\/j} = 0$ for each $i,j$ and hence $g(x,y) = 0$ in
${\comps}[x,y]$.

Assume now that $f(x,y)z + g(x,y)\in \ker(\phi)$. By
definition we obtain that
\begin{eqnarray*}
  {\1}_{\emptyset}
  & = & \sum_{i,j}a_{i\/j}[[(i+1)\alpha + j\beta, i\alpha + (j+1)\beta]]
  + \sum_{i,j}b_{i\/j}[\{i\alpha+ j\beta\}] \\
  & = & \sum_{i,j}a_{i\/j}{\1}_{[(i+1)\alpha + j\beta, i\alpha + (j+1)\beta]}
  + \sum_{i,j}b_{i\/j}{\1}_{\{i\alpha+ j\beta\}}.
\end{eqnarray*}
Note that each interval $[(i+1)\alpha + j\beta, i\alpha + (j+1)\beta]$
has the fixed length of $\beta - \alpha$.
Since $\alpha/\beta\not\in{\rats}$ there is a unique maximum
linear combination $\mu := i^*\alpha + (j^*+1)\beta$ and hence there exists
an element $m^* \in ](i^*+1)\alpha + j^*\beta, i^*\alpha + (j^*+1)\beta[$
in the corresponding open interval that is not contained in any other
interval $[(i+1)\alpha + j\beta, i\alpha + (j+1)\beta]$ and so there is an
open neighborhood $]m^* - \epsilon, m^* + \epsilon[$ completely contained
in $](i^*+1)\alpha + j^*\beta, i^*\alpha + (j^*+1)\beta[$ with an empty
intersection with all the other intervals
$[(i+1)\alpha + j\beta, i\alpha + (j+1)\beta]$. In this case we can pick
a real $\gamma \in ]m^* - \epsilon, m^* + \epsilon[$ that is not equal to
any linear combination $i\alpha + j\beta$. Evaluating the above
characteristic function at $\gamma$ we obtain
$0 = {\1}_{\emptyset}(\gamma) = a_{i^*\/j^*}$. By induction we obtain
that $a_{i\/j} = 0$ for each $i$ and $j$ and so $f(x,y) = 0$ in
${\comps}[x,y]$. Consequently we have then by the above argument that
$g(x,y) = 0$ in ${\comps}[x,y]$. 
This shows that $\ker(\phi) = ((z-x)(z-y))$ and so we have the following
\begin{equation}
\label{eqn:ab-notrat}
R_{\alpha,\beta} = M(\{\alpha\},\{\beta\},[\alpha,\beta])
\cong {\comps}[x,y,z]/((z-x)(z-y)).
\end{equation}
The case where $\alpha$ or $\beta$ are possibly negative are similar
and so we have (\ref{eqn:ab-notrat}) whenever neither $\alpha$ nor $\beta$
are zero and $\alpha/\beta\not\in{\rats}$.

\subsubsection{(C) $0 < \alpha < \beta$ and $\alpha/\beta\in {\rats}$:}
As in the previous case (B) we have $(z-x)(z-y)\in\ker(\phi)$ and
so any element
in ${\comps}[x,y,z]/\ker(\phi)\cong R_{\alpha,\beta}$
can be represented as $f(x,y)z + g(x,y)$ for some polynomials $f$ and $g$
in two variables over ${\comps}$. In this case we have moreover that
$\alpha/\beta\in{\rats}$ and so there are relatively prime
$m,n\in\nats$ with $\alpha/\beta = m/n$ and hence $y^m - x^n \in\ker(\phi)$
as well. We now argue that $\ker(\phi) = ((z-x)(z-x),y^m - x^n)$.

Suppose first that
$g(x,y) = \sum_{i,j}b_{i\/j}x^iy^j\in\ker(\phi)$. We can partition
the index set $I_g = \{(i,j) : b_{i\/j}\neq 0\}$ by the distinct values
of $i\alpha + j\beta$, that is $I_g = I_1\cup\cdots\cup I_p$ where each
$I_k = \{ (i,j)\in I_g : i\alpha + j\beta = c_k\}$ where $c_k$ is some
real constant. Hence $g(x,y) = \sum_k g_k(x,y)$ where each
$g_k(x,y) = \sum_{(i,j)\in I_k} b_{i\/j}x^iy^j$. By assumption we have
\[
  {\1}_{\emptyset}
  = \sum_{i,j}b_{i\/j}[[i\alpha + j\beta]]
  = \sum_{i,j}b_{i\/j}{\1}_{[i\alpha + j\beta]}.
\]
By evaluating this characteristic function at $c_k$ for $k = 1,\ldots,p$
we get that $ \sum_{(i,j)\in I_k} b_{i\/j} = 0$ for each $k$ and this condition
is sufficient for $g(x,y)\in\ker(\phi)$. If $(i,j), (i',j')\in I_k$, then
$i\alpha + j\beta = c_k = i'\alpha + j'\beta$ and so
$(i',j') = (i,j) + {\ell}(n,-m)$ for some ${\ell}\in{\ints}$. By induction
on $|I_k|$ this implies that each $g_k(x,y)$ is a linear combination of
binomials of the form
\begin{equation}
\label{eqn:m-n}  
x^iy^j - x^{i+\ell n}y^{j-\ell m} = x^iy^{j-\ell m}(y^{\ell m} - x^{\ell n})
\in ((y^m - x^n)).
\end{equation}
This implies that $g(x,y) = \sum_k g_k(x,y)\in ((y^m - x^n))$.

Note that $\{z^2 - (x+y)z + xy, y^m - x^n\}$ forms a Gr\"{o}bner Basis
for the ideal $((z-x)(z-x),y^m - x^n)$ of ${\comps}[x,y,z]$ w.r.t.~LEX
in which $x\preceq y\preceq z$
and so each element in $R_{\alpha,\beta}$ has a representation of
\[
f(x,y)z + g(x,y) = \left(\sum_{j=0}^{m-1}f_j(x)y^j\right)z
+ \left(\sum_{j=0}^{m-1}g_j(x)y^j\right)
\]
where $f_j(x) = \sum_{i}a_{i\/j}x^i$ and
$g_j(x) =  \sum_{i}b_{i\/j}x^i$ are polynomials in
one variable $x$ over ${\comps}$.
Suppose such an element is in the kernel $\ker(\phi)$. By definition this
means that
\begin{eqnarray*}
  {\1}_{\emptyset} & = & \sum_{j=0}^{m-1}
  \left(\sum_{i}a_{i\/j}[[(i+1)\alpha+j\beta,i\alpha+(j+1)\beta]]
  + \sum_{i}b_{i\/j}[\{\alpha+j\beta\}]\right) \\
  & = & \sum_{j=0}^{m-1}
  \left(\sum_{i}a_{i\/j}{\1}_{[(i+1)\alpha+j\beta,i\alpha+(j+1)\beta]}
  + \sum_{i}b_{i\/j}{\1}_{\{i\alpha+j\beta\}}\right).
\end{eqnarray*}
Note that each interval $[(i+1)\alpha+j\beta,i\alpha+(j+1)\beta]$
has length $\beta - \alpha$ and they are all distinct in the first sum,
since $\alpha/\beta = m/n$; a fully reduced fraction and
$i\in\{0,1,\ldots,m-1\}$. 
This means that we can evaluate the above characteristic function on distinct
real numbers, none of the form $i\alpha+j\beta$ and obtain $a_{i\/j} = 0$
for each $i$ and $j$ and hence $f(x,y) = 0$ in ${\comps}[x,y]$.
Consequently $g(x,y) = 0$ in ${\comps}[x,y]$ as well, by the
arguments above and we therefore have that 
\[
f(x,y)z + g(x,y) = \left(\sum_{j=0}^{m-1}f_j(x)y^j\right)z
+ \left(\sum_{j=0}^{m-1}g_j(x)y^j\right) = 0
\]
as a polynomial in ${\comps}[x,y,z]$
This shows that $\ker(\phi) = ((z-x)(z-y), y^m - x^n)$ in this case
and so we have the following
\begin{equation}
\label{eqn:ab-rat}
R_{\alpha,\beta} = M(\{\alpha\},\{\beta\},[\alpha,\beta])
\cong {\comps}[x,y,z]/((z-x)(z-y),y^m - x^n)
\end{equation}
where $\alpha/\beta = m/n$ as an irreducible rational number.

The case when $\alpha < \beta < 0$ and $\alpha/\beta\in {\rats}$
is treated the same way as here above,
since also here there are relatively prime
$m,n\in\nats$ with $\alpha/\beta = m/n$ and hence $y^m - x^n \in\ker(\phi)$.
In exactly the same way one then obtains (\ref{eqn:ab-rat}).

\subsubsection{(D) $\alpha < 0 < \beta$ and $\alpha/\beta\in {\rats}$:}
This case is dealt with in a similar fashion as the previous case (C), but
there are some subtle differences. We will draft this case and
mention the differences.

In addition to $(z-x)(z-y)\in\ker(\phi)$ we have
here that $\alpha/\beta\in{\rats}$ where $\alpha$ and $\beta$ have
opposite sign, and so there are relatively prime
$m,n\in\nats$ with $\alpha/\beta = -m/n$ and hence $x^ny^m - 1 \in\ker(\phi)$
as well. That $\ker(\phi) = ((z-x)(z-y),x^ny^m - 1)$ can be argued in a
similar way as we did in the previous case (C):

Suppose first
that $g(x,y) = \sum_{i,j}b_{i\/j}x^iy^j\in\ker(\phi)$. Again, we can partition
the index set $I_g = \{(i,j) : b_{i\/j}\neq 0\}$ by the distinct values
of $i\alpha + j\beta$, that is $I_g = I_1\cup\cdots\cup I_p$ where each
$I_k = \{ (i,j)\in I_g : i\alpha + j\beta = c_k\}$ where $c_k$ is some
real constant. Hence $g(x,y) = \sum_k g_k(x,y)$ where each
$g_k(x,y) = \sum_{(i,j)\in I_k} b_{i\/j}x^iy^j$. By assumption we have
\[
  {\1}_{\emptyset}
  = \sum_{i,j}b_{i\/j}[[i\alpha + j\beta]]
  = \sum_{i,j}b_{i\/j}{\1}_{[i\alpha + j\beta]}.
\]
By evaluating this characteristic function at $c_k$ for $k = 1,\ldots,p$
we get that $ \sum_{(i,j)\in I_k} b_{i\/j} = 0$ for each $k$ and this condition
is sufficient for $g(x,y)\in\ker(\phi)$. If $(i,j), (i',j')\in I_k$, then
$i\alpha + j\beta = c_k = i'\alpha + j'\beta$ and so
$(i',j') = (i,j) + {\ell}(n,m)$ for some ${\ell}\in{\ints}$. By
induction on $|I_k|$ we obtain, similarly as in (\ref{eqn:m-n}), that 
each $g_k(x,y)$ is a linear combination of binomials of the form
\begin{equation}
\label{eqn:mn-1}  
x^iy^j - x^{i+\ell n}y^{j+\ell m} = x^iy^j(1 - x^{\ell n}y^{\ell m})
\in ((x^ny^m - 1)).
\end{equation}
This implies that $g(x,y) = \sum_k g_k(x,y)\in ((x^ny^m - 1))$.

Again, we note that $\{z^2 - (x+y)z + xy, x^ny^m - 1\}$ forms a
Gr\"{o}bner Basis
for the ideal $((z-x)(z-x),x^ny^m - 1)$ of ${\comps}[x,y,z]$ w.r.t.~LEX
in which $x\preceq y\preceq z$
and so each element in ${\comps}[x,y,z]/\ker(\phi)\cong R_{\alpha,\beta}$
can be expressed in the form $f(x,y)z + g(x,y)$ for some
for some polynomials
$f = \sum_{i,j}a_{i\/j}x^iy^j$ and $g = \sum_{i,j}b_{i\/j}x^iy^j$
in two variables over $\comps$ where neither $f$ nor $g$ have any
terms $x^iy^j$ with $i\geq n$ and $j\geq m$.
Suppose such an element is in the kernel $\ker(\phi)$.
By definition this means that
\[
  {\1}_{\emptyset}
  = \sum_{i,j}a_{i\/j}{\1}_{[(i+1)\alpha + j\beta, i\alpha + (j+1)\beta]}
  + \sum_{i,j}b_{i\/j}{\1}_{\{i\alpha+ j\beta\}}.
\]
As in the previous case we note that each interval
$[(i+1)\alpha+j\beta,i\alpha+(j+1)\beta]$
has length $\beta - \alpha$ and they are all distinct in the first sum,
since $\alpha/\beta = -m/n$ where $m/n$ is a fully reduced
fraction and none of the nonzero coefficients $a_{i\/j}$ has both
$i\geq n$ and $j\geq m$. 
This means that we can evaluate the above characteristic function on distinct
real numbers, non of the form $i\alpha+j\beta$ and obtain $a_{i\/j} = 0$
for each $i$ and $j$ and hence $f(x,y) = 0$ in ${\comps}[x,y]$.
Consequently $g(x,y) = 0$ in ${\comps}[x,y]$ as well, by the
arguments above and we therefore have that 
$f(x,y)z + g(x,y) = 0$
as a polynomial in ${\comps}[x,y,z]$
This shows that $\ker(\phi) = ((z-x)(z-y), x^ny^m - 1)$ in this case
and so we have, similarly to (\ref{eqn:ab-rat}), that
\[
R_{\alpha,\beta} = M(\{\alpha\},\{\beta\},[\alpha,\beta])
\cong {\comps}[x,y,z]/((z-x)(z-y),x^ny^m - 1)
\]
where $\alpha/\beta = -m/n$ as an irreducible rational number.

We summarize these findings from cases (A), (B), (C) and (D)
in the following.
\begin{proposition}
\label{prp:4-rings}
For $\alpha,\beta\in\reals$ we have the following isomorphism classes
for the Minkowski ring
$R_{\alpha,\beta} := M(\{\alpha\},\{\beta\},[\alpha,\beta])$
generated by a closed interval and its endpoints.
\begin{enumerate}
\item If $0 = \alpha < \beta$ or $\alpha < \beta = 0$, then
\[
R_{\alpha,\beta} \cong {\comps}[x,y]/((y-1)(y-x).
\]
\item If $0\not\in\{\alpha,\beta\}$ and $\alpha/\beta\not\in {\rats}$, then 
\[
R_{\alpha,\beta} \cong {\comps}[x,y,z]/((z-x)(z-y).
\]
\item If $\alpha/\beta = m/n \in {\rats}$ where $m$ and $n$ are relatively
prime natural numbers, then
\[
R_{\alpha,\beta} \cong {\comps}[x,y,z]/((z-x)(z-y),y^m - x^n).
\]
\item If $\alpha/\beta = -m/n \in {\rats}$ where $m$ and $n$ are relatively
prime natural numbers, then
\[
R_{\alpha,\beta} \cong {\comps}[x,y,z]/((z-x)(z-y),x^ny^m - 1).
\]
\end{enumerate}
\end{proposition}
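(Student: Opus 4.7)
The plan is to handle all four cases uniformly: first identify a small set of relations that must lie in $\ker(\phi)$ from geometric or general Minkowski-ring considerations, then prove these generate $\ker(\phi)$ by reducing every element to a canonical form and testing it against values of its image indicator function.

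First I would record two universal observations. By the identity $\prod_{v \in P}([P] - [v]) = 0$ applied to $P = [\alpha,\beta]$ with vertices $\alpha, \beta$, the polynomial $(z-x)(z-y)$ always lies in $\ker(\phi)$. When moreover $\alpha = 0$, the indicator ${\1}_{\{0\}}$ is the unit of the ring, so $x - 1 \in \ker(\phi)$; substituting this into the first relation and relabeling the variables gives case~(1) immediately. For the remaining cases, monomials $x^i y^j$ map to ${\1}_{\{i\alpha + j\beta\}}$, so two such monomials represent the same class exactly when $i\alpha + j\beta = i'\alpha + j'\beta$. When $\alpha/\beta$ is irrational (case~(2)) no new relation arises; when $n\alpha = m\beta$ with $\gcd(m,n) = 1$ (case~(3)) we obtain $y^m - x^n \in \ker(\phi)$; when $n\alpha + m\beta = 0$ with $\gcd(m,n) = 1$ (case~(4)) we obtain $x^n y^m - 1 \in \ker(\phi)$.

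To show these candidate generators are enough, I would check that $\{z^2 - (x+y)z + xy\}$ together with the appropriate additional polynomial is a Gr\"{o}bner basis under LEX with $x \prec y \prec z$, so that every class in the quotient has a normal form $f(x,y)z + g(x,y)$, with additional $y$-degree or bidegree restrictions on $f, g$ in cases (3) and (4). Applying $\phi$ sends such a normal form to a sum of indicator functions of intervals of the common length $\beta - \alpha$ (coming from $fz$) plus indicator functions of singleton points $i\alpha + j\beta$ (coming from $g$). Assuming this sum vanishes in $M({\cal{P}}({\reals}))$, evaluation at carefully chosen real numbers should force all coefficients to vanish.

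The main obstacle is this last step: ensuring that the intervals and points produced by a normal form element are in fact distinct, so that pointwise evaluation isolates the coefficients one by one. In case~(2), distinctness of the left endpoints $(i+1)\alpha + j\beta$ is immediate from irrationality of $\alpha/\beta$. In cases (3) and (4), two monomials produce the same interval or point iff their exponent vectors differ by an integer multiple of $(n,-m)$, respectively $(n,m)$; but this is exactly the equivalence imposed by the extra monomial relation, so the chosen normal form collects one representative per class, restoring distinctness. One can then pick, inside some extremal interval, a real number that avoids the countable set $\{i\alpha + j\beta\}$ and lies outside every other interval, evaluate to peel off the corresponding coefficient of $f$, and iterate; once $f \equiv 0$, evaluating at each of the distinct points $i\alpha + j\beta$ kills every coefficient of $g$, yielding the reverse containment $\ker(\phi) \subseteq ((z-x)(z-y), \cdot)$ in each case.
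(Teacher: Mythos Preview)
Your proposal is correct and follows essentially the same approach as the paper: identify the relation $(z-x)(z-y)$ (and the extra binomial relation in the rational cases), verify these form a Gr\"{o}bner basis yielding the normal form $f(x,y)z + g(x,y)$, then kill the coefficients by evaluating the image indicator function at well-chosen real points, using distinctness of the intervals and singletons guaranteed by the normal form. The only remark is that your phrase ``gives case~(1) immediately'' is slightly terse---reducing to two variables identifies the candidate generator, but the normal-form-plus-evaluation argument is still needed there, exactly as you outline for the remaining cases.
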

\begin{rmks*}
  (i) Note that each of the four presented as the kernels $\ker(\phi)$
  in the above Proposition~\ref{prp:4-rings} is a power-closed ideal
  although only the second one of them is homogeneous. 
  (ii) In the same way as
  $R_{\alpha,\beta} = M(\{\alpha\},\{\beta\},[\alpha,\beta])
  \cong {\comps}[x,y,z]/\ker(\phi)$, where
$\phi : {\comps}[x,y,z] \twoheadrightarrow R_{\alpha,\beta}$ is by
$\phi(x) = {\1}_{\{\alpha\}}$, $\phi(y) = {\1}_{\{\beta\}}$ and
$\phi(z) = {\1}_{[\alpha,\beta]}$, then we can let
$R_{\alpha,\beta}^{\pm} := {\comps}[x,y,z]^{\pm}/\ker(\phi)$, where
$\phi : {\comps}[x,y,z]^{\pm} \twoheadrightarrow R_{\alpha,\beta}^{\pm}$
is defined for the Laurent polynomials
in the same way as for $R_{\alpha,\beta}$. In this way, the
above Proposition~\ref{prp:4-rings} also holds
when the polynomial rings ${\comps}[x,y]$ and ${\comps}[x,y,z]$
are replaced by the corresponding Laurent polynomial rings
${\comps}^{\pm}[x,y]$ and ${\comps}[x,y,z]^{\pm}$ respectively, since
in every one of the four cases $\ker(\phi)$ in Proposition~\ref{prp:4-rings}
does not contain a monomial generator.
\end{rmks*}
With the setup from the above remarks we 
have directly from Proposition~\ref{prp:4-rings} the following slightly
simpler presentation since the last two cases become identical if one allows
negative powers of $x, y$ and $z$. We state this as the following
corollary.
\begin{corollary}
\label{cor:3-rings}
For $\alpha,\beta\in\reals$ we have the following isomorphism classes
for the Laurent Minkowski ring $R_{\alpha,\beta}^{\pm}$
generated by a closed interval and its endpoints.
\begin{enumerate}
\item If $0 = \alpha < \beta$ or $\alpha < \beta = 0$, then
\[
R_{\alpha,\beta}^{\pm} \cong {\comps}[x,y]^{\pm}/((y-1)(y-x).
\]
\item If $0\not\in\{\alpha,\beta\}$ and $\alpha/\beta\not\in {\rats}$, then 
\[
R_{\alpha,\beta}^{\pm} \cong {\comps}[x,y,z]^{\pm}/((z-x)(z-y).
\]
\item If $\alpha/\beta = m/n \in {\rats}$ where $m$ and $n$ are relatively
prime integers, then
\[
R_{\alpha,\beta}^{\pm} \cong {\comps}[x,y,z]^{\pm}/((z-x)(z-y),y^m - x^n).
\]
\end{enumerate}
\end{corollary}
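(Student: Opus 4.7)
The plan is to derive this corollary directly from Proposition~\ref{prp:4-rings} via the remark preceding the corollary, and then merge the two rational subcases into a single unified statement using units of the Laurent ring. My approach has two ingredients: (1) transferring the polynomial presentations to Laurent presentations, and (2) reconciling the two rational cases (3) and (4) of the proposition into the single case (3) of the corollary by allowing $m,n$ to range over the relatively prime \emph{integers} rather than natural numbers.

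First I would make the transfer to the Laurent setting precise. The ring $R_{\alpha,\beta}^{\pm}$ is defined by the surjection $\phi : {\comps}[x,y,z]^{\pm} \twoheadrightarrow R_{\alpha,\beta}^{\pm}$ given by the same assignments $\phi(x) = {\1}_{\{\alpha\}}$, $\phi(y) = {\1}_{\{\beta\}}$, $\phi(z) = {\1}_{[\alpha,\beta]}$ as in the polynomial case. Since ${\comps}[x,y,z]^{\pm}$ is the localization of ${\comps}[x,y,z]$ at the multiplicative set generated by $x,y,z$, and since inspection of the four kernels displayed in Proposition~\ref{prp:4-rings} shows that none of them contains a monomial generator, localization simply extends the polynomial kernel to the corresponding Laurent kernel without collapsing the quotient. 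Thus the analogues of cases (A), (B), (C), (D) hold verbatim in ${\comps}[x,y,z]^{\pm}$, yielding presentations (1), (2), and two separate rational cases with relations $(z-x)(z-y), y^m - x^n$ and $(z-x)(z-y), x^n y^m - 1$, respectively.

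Next I would merge the two rational cases. The point is that in the Laurent ring, $x^{\pm 1}$ and $y^{\pm 1}$ are units. If $\alpha/\beta$ is rational, write $\alpha/\beta = m/n$ with $m,n$ coprime integers; then $n$ can be chosen positive, so the sign of the ratio is encoded in the sign of $m$. When $\alpha/\beta > 0$ we may take both $m,n$ positive and the relation $y^m - x^n$ matches Proposition~\ref{prp:4-rings}(3) directly. When $\alpha/\beta < 0$, Proposition~\ref{prp:4-rings}(4) provides the relation $x^n y^{|m|} - 1$ with $n, |m|$ positive coprime. Taking $m < 0$ in the Laurent statement, we have
\[
y^m - x^n = y^{-|m|} - x^n = -y^{-|m|}\bigl(x^n y^{|m|} - 1\bigr),
\]
and since $y^{-|m|}$ is a unit in ${\comps}[x,y,z]^{\pm}$, the principal ideal $(y^m - x^n)$ coincides with $(x^n y^{|m|} - 1)$. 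Hence the presentation in case (4) of the proposition is rewritten in the uniform form $(z-x)(z-y),\, y^m - x^n$ once we allow $m$ to be a negative integer.

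The only subtle step, and what I would double-check, is the claim that the polynomial analysis in cases (B), (C), (D) carries over to the Laurent ring. Because the argument in Proposition~\ref{prp:4-rings} proceeds by evaluating indicator-function identities pointwise at carefully chosen real numbers, and because the set of Laurent monomials $x^i y^j z^k$ evaluates in $M^\pm({\cal{P}}(\reals))$ to indicator functions ${\1}_{[i\alpha+j\beta+k\alpha, i\alpha+j\beta+k\beta]}$ (now with $i,j,k$ allowed to be any integers), the same Gr\"obner-basis reduction modulo $(z-x)(z-y)$ and the appropriate binomial relation applies, with indices running over ${\ints}$ instead of ${\nats}$. The same pointwise-evaluation arguments then force all coefficients to vanish, giving triviality of the quotient kernel in each case. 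This is where I would be most careful, but no new ideas are required beyond those already used in the proof of Proposition~\ref{prp:4-rings}.
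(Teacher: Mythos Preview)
Your proposal is correct and follows essentially the same approach as the paper: the paper simply notes (in the remark preceding the corollary) that Proposition~\ref{prp:4-rings} carries over verbatim to the Laurent setting since none of the four kernels contains a monomial generator, and then observes that cases (3) and (4) coincide once negative exponents are permitted. Your write-up fills in the details of that second step more explicitly than the paper does, but the argument is the same.
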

Apriori, it is tempting to believe that in general the Minkowski ring of
of a finite collection of closed sets in $W = {\reals}^d$ is
isomorphic to the Minkowski ring of an affine bijective image of these
closed sets collectively. In most cases this is indeed the case.
However, we will now show that the four types of $\comps$-algebras from
Proposition~\ref{prp:4-rings} are different and always
non-isomorphic as rings. This means that Minkowski rings depend very
much on the geometry, or the linear dependence of the closed subsets of
${\reals}$ over either ${\nats}$ or ${\ints}$,
of the polytopes in question as well as their
combinatorics. The rest of this section is devoted to demonstrating
that the rings from Proposition~\ref{prp:4-rings} are non-isomorphic
as rings and not merely as ${\comps}$-algebras. 

For convenience let
$R_1 := {\comps}[x,y]/((y-1)(y-x))$,
$R_2 := {\comps}[x,y,z]/((z-x)(z-y))$, 
$R_3(m,n) := {\comps}[x,y,z]/((z-x)(z-y),y^m - x^n)$ and
$R_4(m,n) := {\comps}[x,y,z]/((z-x)(z-y),x^ny^m - 1)$,
where $m, n\in\nats$ are relatively prime,
be the four ${\comps}$-algebras presented
in Proposition~\ref{prp:4-rings}. We will show that they are
mutually non-isomorphic rings. In particular we will show
that $R_3(m,n)$ and $R_4(m',n')$ are always non-isomorphic for
any pairs $(m,n)$ and $(m',n')$ of relatively prime natural numbers.

For $R_1$ note that the bijective
change of variables $x' = y - x$ and $y' = y - 1$ shows that
$R_1 \cong {\comps}[x',y']/(x'y')$ as $\comps$ -algebras.
Likewise, for $R_2$ the bijective change of variables $x' = x-z$, $y' = y-z$
and $z' = z$ shows that
$R_2 \cong {\comps}[x',y',z']/(x'y')$ as $\comps$-algebras
and hence $R_2 = R_1[z']$, the polynomial algebra in one variable
$z'$ over $R_1$. Taking the Krull dimension of both the rings
$R_1$ and $R_2$ it is
well-known that $\dim(R_1[z']) = \dim(R_1) + 1$, since $R_1$ is a Noetherian
ring~\cite[p.~126]{Atiyah-Macdonald}, and hence $R_1$ and $R_2$ are not
isomorphic as rings nor as $\comps$-algebras. In fact, looking at the
original presentation of $R_1$ and $(y-1)(y-x) = y^2 - (1+x)y +x$ as the
sole element of a Gr\"{o}bner Basis for the ideal $((y-1)(y-x))$, w.r.t.~the
term order LEX in which $x\preceq y$, we then obtain
\[
\dim(R_1)
= \dim({\comps}[x,y]/(y^2))
= \dim({\comps}[x,y]/(y))
= \dim({\comps}[x])
= 1,
\]
and hence $\dim(R_2) = 2$. Similarly for $R_3(m,n)$ we see that
$\{z^2 - (x+y)z + xy, y^m - x^n\}$ forms a Gr\"{o}bner Basis
for the ideal $((z-x)(z-x),y^m - x^n)$ of ${\comps}[x,y,z]$
w.r.t.~LEX in which $x\preceq y\preceq z$ and hence 
\[
\dim(R_3(m,n))
= \dim({\comps}[x,y,z]/(z^2,y^m))
= \dim({\comps}[x,y,z]/(z,y))
= \dim({\comps}[x])
= 1,
\]
and hence $R_2\not\cong R_3(m,n)$. We now show that $R_1\not\cong R_3(m,n)$
despite they both have Krull dimension of one. For that we look closer
at the lattice of prime ideals of both $R_1$ and $R_3(m,n)$. 

By the bijective change of variables from above we have
$R_1 \cong {\comps}[x,y]/(xy)$ as $\comps$-algebras. If $P$ is a
minimal prime ideal of ${\comps}[x,y]$ that contains the element $xy$
then either $x\in P$ or $y\in P$. If $x\in P$ then $(x)\subseteq P$
and hence $P = (x)$ by the minimality of $P$. Similarly if $y\in P$
then $P = (y)$. Consequently $R_1$ has exactly two minimal
prime ideals $\overline{(x)}$ and $\overline{(y)}$ and their
sum is $\overline{(x,y)}$ a maximal ideal of $R_1$.
\begin{claim}
\label{clm:sum-max}
The sum of the minimal prime ideals of $R_1$ is a maximal ideal of $R_1$.
\end{claim}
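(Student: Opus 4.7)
The plan is to exploit the explicit presentation $R_1 \cong {\comps}[x,y]/(xy)$ obtained from the bijective change of variables $x' = y-x$, $y' = y-1$ that was noted just above the claim. Once we have this description, the two minimal primes of $R_1$ are already identified in the paragraph preceding the claim as $\overline{(x)}$ and $\overline{(y)}$ (meaning $(x)/(xy)$ and $(y)/(xy)$), and their sum is $\overline{(x,y)} = (x,y)/(xy)$.

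To verify maximality I would use the standard correspondence theorem for ideals in a quotient ring: the maximal (respectively prime) ideals of ${\comps}[x,y]/(xy)$ correspond bijectively to the maximal (respectively prime) ideals of ${\comps}[x,y]$ that contain the ideal $(xy)$. Since $(xy) \subseteq (x,y)$, the ideal $\overline{(x,y)}$ makes sense in $R_1$, and by the third isomorphism theorem we have
\[
R_1 / \overline{(x,y)} \;\cong\; {\comps}[x,y]/(x,y) \;\cong\; {\comps}.
\]
Because ${\comps}$ is a field, $\overline{(x,y)}$ is a maximal ideal of $R_1$, which is exactly what the claim asserts.

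There is essentially no obstacle; the argument is a direct application of the correspondence theorem combined with the already-established fact that $R_1 \cong {\comps}[x,y]/(xy)$ has precisely the two minimal primes $\overline{(x)}$ and $\overline{(y)}$. The only thing to be a little careful about is to confirm that the sum of the two minimal primes as ideals in the quotient is indeed $(x,y)/(xy)$ rather than something smaller, but this is immediate since the sum of $(x)$ and $(y)$ in ${\comps}[x,y]$ is $(x,y)$, and the image of a sum of ideals under the projection is the sum of their images.
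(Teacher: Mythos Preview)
Your proposal is correct and follows essentially the same approach as the paper: the paper identifies the two minimal primes of $R_1 \cong {\comps}[x,y]/(xy)$ as $\overline{(x)}$ and $\overline{(y)}$, notes their sum is $\overline{(x,y)}$, and asserts this is maximal. You supply the one extra line the paper leaves implicit, namely $R_1/\overline{(x,y)} \cong {\comps}[x,y]/(x,y) \cong {\comps}$, which is a welcome clarification but not a different argument.
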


Regarding the ring $R_3(m,n)$ we fist note that by bijective change
of variables,
similar to what we considered here above for $R_2$, we obtain that
$R_3(m,n) \cong {\comps}[x,y,z]/(xy, (z+y)^m - (z+x)^n)$. As for $R_2$ consider
a minimal prime ideal $P$ of ${\comps}[x,y,z]$ that contains the ideal
$I = (xy, (z+y)^m - (z+x)^n)$. Since $xy\in P$ we have either $x\in P$ or
$y\in P$.

If $x\in P$ then we have
$(x, (z+y)^m - (z+x)^n) = (x, (z+y)^m - z^n)\subseteq P$. Note that
\begin{equation}
  \label{eqn:int-domain}
{\comps}[x,y,z]/(x, (z+y)^m - z^n) = {\comps}[y,z]/((z+y)^m - z^n)
\cong {\comps}[y,z]/(y^m - z^n).
\end{equation}
The following is from~\cite[Lemma 6.2]{A-Lawrence}.
\begin{lemma}
\label{lmm:6.2}
A binomial
$b = x_1^{p_1}\cdots x_h^{p_h} - y_1^{q_1}\cdots y_k^{q_k}
\in{\comps}[x_1,\ldots,x_h,y_1,\ldots,y_k]$ is irreducible in both
${\comps}[x_1,\ldots,x_h,y_1,\ldots,y_k]$ and in
${\comps}[x_1,\ldots,x_h,y_1,\ldots,y_k]^{\pm}$ if and only if
$\gcd(p_1,\ldots,p_h,q_1,\ldots,q_k) = 1$.
\end{lemma}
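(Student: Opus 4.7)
The plan is to prove both implications, with the nontrivial content lying in the Laurent ring case, from which we then transfer irreducibility to the polynomial ring. Set $d = \gcd(p_1, \ldots, p_h, q_1, \ldots, q_k)$, and write $u = x_1^{p_1}\cdots x_h^{p_h}$, $v = y_1^{q_1}\cdots y_k^{q_k}$, so that $b = u - v$. Since $u$ and $v$ involve disjoint sets of variables, $\gcd(u,v) = 1$ as monomials.

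For the easy direction, suppose $d > 1$. Writing $u = U^d$ and $v = V^d$ for the evident nontrivial monomials $U$ and $V$, the identity
\[
U^d - V^d \;=\; \prod_{j=0}^{d-1}\bigl(U - \zeta^j V\bigr), \qquad \zeta = e^{2\pi i / d},
\]
exhibits $b$ as a nontrivial product in both rings.

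For the harder direction, assume $d = 1$, and first argue irreducibility in the Laurent ring $L = \comps[x_1, \ldots, x_h, y_1, \ldots, y_k]^\pm$. Since $v$ is a unit in $L$, the element $b$ is associate in $L$ to $1 - x^\gamma$, where $\gamma = (p_1, \ldots, p_h, -q_1, \ldots, -q_k) \in \ints^{h+k}$ is primitive (the gcd of its entries is still $1$). A primitive integer vector extends to an integer basis of $\ints^{h+k}$, so $\gamma$ appears as a column of some $M \in GL_{h+k}(\ints)$; its inverse $N = M^{-1}$ also has integer entries. The assignment sending each variable to the Laurent monomial whose exponent vector is the corresponding column of $N$ defines a $\comps$-algebra automorphism of $L$, under which $x^\gamma$ is sent to a single variable (because $N\gamma = e_1$), and hence $1 - x^\gamma$ is sent to $1 - z_1$, which is manifestly irreducible in $L$. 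Therefore $b$ is irreducible in $L$. Carrying out this $GL$-change of coordinates is the main obstacle, and it is the step in which the primitivity hypothesis is used essentially.

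Finally, to transfer irreducibility to the polynomial ring $R$, suppose $b = fg$ in $R$. This is also a factorization in $L$, so one of $f, g$, say $f$, must be a unit of $L$, hence a scalar multiple of a Laurent monomial. Because $f \in R$ has no negative exponents, $f = c \cdot m$ for a scalar $c$ and a genuine monomial $m \in R$. But $m$ divides the two-term polynomial $u - v$ in $R$, which forces $m$ to divide each of $u$ and $v$ separately; since $\gcd(u, v) = 1$, $m$ is a constant, and the factorization is trivial. Therefore $b$ is irreducible in $R$ as well.
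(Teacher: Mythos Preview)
Your proof is correct. Note, however, that the paper does not actually prove this lemma: it is quoted verbatim from \cite[Lemma~6.2]{A-Lawrence} and used as a black box, so there is no in-paper argument to compare against.

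Your approach---reducing to the Laurent ring, using the $GL_{h+k}(\ints)$-automorphism induced by extending the primitive exponent vector $\gamma$ to a lattice basis so that $1 - x^\gamma$ becomes $1 - z_1$, and then transferring irreducibility back to the polynomial ring via the observation that a monomial dividing a two-term polynomial with coprime monomials must be trivial---is the standard one for such binomial irreducibility statements. All steps are sound: the key points that $1 - z_1$ is prime in $L$ (since $L/(1-z_1)$ is a Laurent polynomial ring in one fewer variable, hence a domain) and that a monomial dividing $u - v$ in $R$ must divide each of $u,v$ separately (since $u$ and $v$ cannot cancel after division by a monomial, having disjoint variable supports) are handled correctly.
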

By Lemma~\ref{lmm:6.2} we have that the polynomial $y^m - z^n$
is irreducible both in ${\comps}[y,z]$ and ${\comps}[y,z]^{\pm}$
Since these are both UFDs then $y^m - z^n$ is a prime element in
both rings and hence $(y^m - z^n)$ is a prime ideal in both rings.
In particular ${\comps}[y,z]/(y^m - z^n)$ is an integral domain.
By (\ref{eqn:int-domain}) the ideal $(x, (z+y)^m - z^n)$ is therefore
a prime ideal of ${\comps}[x,y,z]$. By the minimality of $P$ we therefore
have that $P = (x, (z+y)^m - z^n)$ in this this case.

If $y\in P$ we get similarly that $P = (y, z^m - (z+x)^n)$.
Consequently there are exactly two minimal
prime ideals of ${\comps}[x,y,z]$ that contain
the ideal $(xy, (z+y)^m - (z+x)^n)$ and their sum is
\[
(x, (z+y)^m - z^n) + (y, z^m - (z+x)^n) = (x,y,z^m - z^n)
\]
which is not a maximal ideal of ${\comps}[x,y,z]$ since
${\comps}[x,y,z]/(x,y,z^m-z^n) \cong {\comps}[z]/(z^m-z^n)$
which is never a field when $m$ and $n$ are relatively prime natural numbers.
It is not even an integral domain unless $m = n = 1$.
The ideal $(x,y,z^m - z^n)$ of ${\comps}[x,y,z]$ corresponds to the
ideal $\Sigma_3$ of $R_3(m,n)$; the sum of the two minimal primes of $R_3(m,n)$
and, assuming $2\leq m < n$ we have 
\begin{equation}
  \label{eqn:R3-S3}
R_3(m,n)/\Sigma_3 \cong {\comps}[z]/(z^m-z^n) = {\comps}[z]/(z^m(1-z^{n-m}))
\cong {\comps}[z]/(z^n)\times {\comps}^{m-n}
\end{equation}
as ${\comps}$-algebras and so $R_3(m,n)/\Sigma_3$ contains a nilpotent element;
$(\overline{z},0,\ldots,0)$ from the right hand side of (\ref{eqn:R3-S3}).
As a result we have the following.
\begin{claim}
\label{clm:sum-not-max}
The sum of the minimal prime ideals $\Sigma_3$ of $R_3(m,n)$ is a not a
maximal ideal of $R_3(m,n)$. Further, if $m,n\geq 2$, then $R_3(m,n)/\Sigma_3$
contains a nilpotent element.
\end{claim}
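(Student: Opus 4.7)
The plan is to compute $R_3(m,n)/\Sigma_3$ explicitly and read both conclusions off a one-variable quotient. First I would apply the linear change of variables $x' = x - z$, $y' = y - z$, $z' = z$ (a $\comps$-algebra automorphism of $\comps[x,y,z]$) to replace the generator $(z-x)(z-y)$ by the monomial $xy$, obtaining $R_3(m,n) \cong \comps[x,y,z]/(xy,\,(z+y)^m - (z+x)^n)$. In this form any minimal prime over the defining ideal must contain $x$ or $y$, so there are at most two candidates; reducing the second generator modulo $x$ gives $(z+y)^m - z^n$, and modulo $y$ gives $z^m - (z+x)^n$.

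Second, I would show that each of the two candidate ideals $\mathfrak{p}_1 = (x,\,(z+y)^m - z^n)$ and $\mathfrak{p}_2 = (y,\,z^m - (z+x)^n)$ is actually prime. After killing $x$ one works in $\comps[y,z]$, and the shift $y \mapsto y - z$ is an automorphism taking $(z+y)^m - z^n$ to $y^m - z^n$. By Lemma~\ref{lmm:6.2} the binomial $y^m - z^n$ is irreducible in the UFD $\comps[y,z]$ because $\gcd(m,n) = 1$, hence prime, and the shift preserves this. The symmetric argument handles $\mathfrak{p}_2$, so $\mathfrak{p}_1$ and $\mathfrak{p}_2$ are precisely the minimal primes.

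Third, I would sum these two primes. Clearly $\mathfrak{p}_1 + \mathfrak{p}_2$ contains $x$, $y$, and (reducing either generator modulo $x$ and $y$) the polynomial $z^m - z^n$, so the sum equals $(x,y,z^m - z^n)$. Passing to $R_3(m,n)$ we obtain
\[
R_3(m,n)/\Sigma_3 \;\cong\; \comps[z]/(z^m - z^n).
\]
Assuming without loss of generality $m < n$ (the hypothesis $\gcd(m,n) = 1$ with $m,n \geq 2$ forbids $m = n$), factor $z^m - z^n = -z^m(z^{n-m} - 1)$. The two factors are coprime because $z^{n-m} - 1$ is a unit at $z = 0$, so CRT yields
\[
\comps[z]/(z^m - z^n) \;\cong\; \comps[z]/(z^m) \,\times\, \comps[z]/(z^{n-m} - 1) \;\cong\; \comps[z]/(z^m) \times \comps^{\,n-m}.
\]
This product has at least two simple factors, so it is not a field and $\Sigma_3$ is not maximal. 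Moreover, when $m \geq 2$ the element $\bar z \in \comps[z]/(z^m)$ is a nonzero nilpotent, whose image in the product is a nonzero nilpotent of $R_3(m,n)/\Sigma_3$.

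The main obstacle is the primality step for $\mathfrak{p}_1$ and $\mathfrak{p}_2$: without the coprimality $\gcd(m,n) = 1$ the binomial $y^m - z^n$ would factor and $\mathfrak{p}_1$, $\mathfrak{p}_2$ would not be minimal primes. Invoking Lemma~\ref{lmm:6.2} is the critical algebraic input; once that is in hand, everything reduces to CRT in one variable.
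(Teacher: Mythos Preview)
Your proof is correct and follows essentially the same route as the paper: the same change of variables to make the first generator $xy$, the same identification of the two minimal primes via Lemma~\ref{lmm:6.2}, the same sum $(x,y,z^m-z^n)$, and the same CRT splitting of $\comps[z]/(z^m-z^n)$ to exhibit non-maximality and the nilpotent. Your decomposition $\comps[z]/(z^m)\times\comps^{\,n-m}$ is in fact cleaner than the paper's display, which contains a typo in the exponents.
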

By Claims~\ref{clm:sum-max} and~\ref{clm:sum-not-max} we see that the
${\comps}$-algebras $R_1, R_2$ and $R_3(m,n)$ are always mutually non-isomorphic
as rings.

For $R_4(m,n)$ we see that
$\{z^2 - (x+y)z + xy, x^ny^m - 1\}$ forms a Gr\"{o}bner Basis
for the ideal $((z-x)(z-x),x^ny^m - 1)$ of ${\comps}[x,y,z]$
w.r.t.~LEX in which $x\preceq y\preceq z$ and so 
\begin{eqnarray*}
  \dim(R_4(m,n))
  & = & \dim({\comps}[x,y,z]/(z^2,x^ny^m)) \\
  & = & \dim({\comps}[x,y,z]/(z,xy)) \\
  & = & \dim({\comps}[x,y]/(xy)) \\
  & = & \dim(R_1) \\
  & = & 1,
\end{eqnarray*}
and hence $R_4(m,n)\not\cong R_2$. We need to show that
$R_4(m,n)\not\cong R_1,R_3(m',n')$ for any $m,m',n,n'\in\nats$. 

As with the rings $R_2$ and $R_3(m,n)$, we have that with a bijective
change of variables that the ring
$R_4(m,n) \cong {\comps}[x,y,z]/(xy, (z+x)^n(z+y)^m - 1)$ and that there are
exactly two minimal prime ideals in ${\comps}[x,y,z]$ that contain
the ideal $J = (xy, (x+z)^n(y+z)^m - 1)$, namely
$(x, z^n(z+y)^m - 1)$ containing $x$ and
$(y, (z+x)^nz^m - 1)$ containing $y$. The minimal ideal containing
both of these ideals, their sum is then
\[
(x, z^n(z+y)^m - 1) + (y, (z+x)^nz^m - 1) = (x,y,z^{m+n} - 1)
\]
which is not a maximal ideal of ${\comps}[x,y,z]$ since
${\comps}[x,y,z]/(x,y,z^{m+n}-1) \cong {\comps}[z]/(z^{m+n}-1)$.
Again, the ideal $(x,y,z^{m+n} - 1)$ of ${\comps}[x,y,z]$ corresponds to the
ideal $\Sigma_4$ of $R_4(m,n)$; the sum of the two minimal primes of $R_4(m,n)$
and, since $z^{m+n} - 1$ is a separable polynomial in ${\comps}[z]$, then
we have 
\begin{equation}
  \label{eqn:R4-S4}
R_4(m,n)/\Sigma_4 \cong {\comps}[z]/(z^{m+n}-1) \cong {\comps}^{m+n}
\end{equation}
as ${\comps}$-algebras,
the Cartesian product of $m+n$ copies of ${\comps}$ and so
$R_4(m,n)/\Sigma_4$ never contains a nilpotent element.
By Claim~\ref{clm:sum-max} and (\ref{eqn:R4-S4}) we have
$R_4(m,n)\not\cong R_1$. Also,
if $m,n\geq 2$ we see by (\ref{eqn:R3-S3}) and (\ref{eqn:R4-S4})
that $R_3(m,n)$ is never isomorphic to the ring
$R_4(m',n')$ for any $m',n'\in\nats$.

Finally, consider $R_3(m,n)$ when either $m=1$ or $n=1$,
say $m=1$ and $n\geq 1$. In this case we have by definition that
$R_3(1,n) = {\comps}[x,y,z]/((z-x)(z-y),y - x^n)
= {\comps}[x,z]/((z-x)(z-x^n))$. By completing the square of
$(z-x)(z-x^n) = (z - (x+x^n)/2)^2 - ((x-x^n)/2)^2$ and
using a change of variable $y := z - (x+x^n)/2$ we get that
$R_3 (1,n) \cong {\comps}[x,y]/(y^2 - f(x)^2)$ where
$f(x) = (x-x^n)/2$. Note that $n\geq 2$ since
$\alpha\neq\beta$ in $R_{\alpha,\beta}$ from Proposition~\ref{prp:4-rings}
and hence $f(x)$ is a polynomial in $x$ of degree at least $2$. 
\begin{lemma}
\label{lmm:units}
For any $f\in\comps[x]$ of positive degree, the group of units
is $U({\comps}[x,y]/(y^2 - f(x)^2)) = {\comps}^*$,\;
the multiplicative group of the non-zero elements of $\comps$.
\end{lemma}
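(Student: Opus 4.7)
The plan is to reduce the ring to a form where it embeds into a product of polynomial rings, and then to deduce that a unit must correspond to the same constant in each factor. First, I substitute $u = y - f(x)$; this gives $\comps[x,y] = \comps[x,u]$ and turns the defining relation into $y^2 - f(x)^2 = (y-f(x))(y+f(x)) = u(u + 2f(x))$. So it suffices to identify the units of
\[
R := \comps[x,u]/(u(u+2f(x))).
\]

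Next, in the UFD $\comps[x,u]$ the polynomials $u$ and $u+2f(x)$ are both irreducible (each is monic linear in $u$), and any common divisor must divide their difference $2f(x)\in\comps[x]$. Since $u$ does not divide the nonzero polynomial $2f(x)$, we conclude $\gcd(u, u+2f(x))=1$, so $(u)\cap (u+2f(x)) = (u(u+2f(x)))$. Consequently the pair of canonical projections yields an injective $\comps$-algebra homomorphism
\[
\psi \colon R \;\hookrightarrow\; \comps[x]\times\comps[x], \qquad \overline{P(x,u)}\ \longmapsto\ \bigl(P(x,0),\ P(x,-2f(x))\bigr).
\]

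If $r = \overline{P(x,u)}\in R$ is a unit, then $\psi(r)$ is a unit in $\comps[x]\times\comps[x]$, hence a pair $(c_1, c_2)$ with $c_1, c_2 \in \comps^*$, since the units of $\comps[x]$ are precisely $\comps^*$. From $P(x,0) = c_1$ I would write $P(x,u) = c_1 + u\,Q(x,u)$ for some $Q\in\comps[x,u]$, and then evaluating at $u = -2f(x)$ gives $2f(x)\,Q(x,-2f(x)) = c_1 - c_2$ in $\comps[x]$. Here the hypothesis $\deg f \geq 1$ enters essentially: the nonzero polynomial $2f(x)$ cannot divide the constant $c_1 - c_2$ unless $c_1 = c_2$. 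Therefore $c_1 = c_2$ and $Q(x,-2f(x)) = 0$, so $(u + 2f(x))$ divides $Q$ in $\comps[x,u]$. Writing $Q = (u+2f(x))S$ yields $P(x,u) \equiv c_1 \pmod{u(u+2f(x))}$, whence $r = c_1 \in \comps^*$.

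The main care is in setting up the injection $\psi$, which relies on the coprimality of $u$ and $u+2f(x)$; once that is in hand the rest is routine algebra. Note that without the positive-degree assumption the ring would split as $\comps[x]\times\comps[x]$ (when $f$ is a nonzero constant), which has the much larger unit group $\comps^*\times\comps^*$; this confirms that the hypothesis is used essentially and not just for convenience.
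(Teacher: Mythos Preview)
Your proof is correct. The paper's argument is different in presentation: it uses the unique representation $a(x)+yb(x)$ and the multiplicative ``norm'' $N(a+yb)=a^2-f^2b^2=(a-fb)(a+fb)$; from $N(u)N(v)=1$ in $\comps[x]$ it deduces that both factors $a\pm fb$ lie in $\comps^*$, hence $a\in\comps$ and $fb\in\comps$, forcing $b=0$ since $\deg f\ge 1$. Your approach instead makes the change of variable $u=y-f(x)$ and uses the coprimality of $u$ and $u+2f(x)$ to embed $R$ into $\comps[x]\times\comps[x]$, then analyzes the two coordinates of a unit. The two arguments are closely related under the hood---your coordinate projections $P\mapsto P(x,0)$ and $P\mapsto P(x,-2f(x))$ correspond exactly to $a+yb\mapsto a\pm fb$, and the paper's norm is the product of your two coordinates---but your framing is more structural and makes transparent why the positive-degree hypothesis is essential (it is precisely what prevents the embedding from being surjective, i.e., from splitting $R$ as a full product). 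The paper's route is a bit shorter; yours gives a cleaner explanation of the ring structure and of the boundary case $\deg f=0$.
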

\begin{proof} (Sketch:)
Note that each element in ${\comps}[x,y]/(y^2 - f(x)^2)$ has a unique
representation as $a(x) + yb(x)$ for some $a(x),b(x)\in {\comps}[x]$
and hence there is a well defined field-norm-like map
$N : {\comps}[x,y]/(y^2 - f(x)) \rightarrow {\comps}[x]$ given by
$N(a(x) + yb(x)) = a(x)^2 - f(x)^2b(x)^2$ that respects multiplication.

Suppose $u = a(x) + yb(x)$ is a unit, then there is a $v$ with $uv = 1$
and hence $1 = N(1) = N(uv) = N(u)N(v)$ as polynomials in ${\comps}[x]$
and so $N(u),N(v)\in {\comps}^*$. In particular 
\[
(a(x) - f(x)b(x))(a(x) + f(x)b(x)) = N(u)\in {\comps}^*.
\]
Since $a(x) - f(x)b(x), a(x) + f(x)b(x)\in {\comps}[x]$ we must have
both $a(x) - f(x)b(x), a(x) + f(x)b(x)\in {\comps}^*$ and therefore 
$a(x), f(x)b(x)\in {\comps}$. Since $f(x)$ has degree of one or more
we must have $b(x) = 0$, the zero polynomial and hence $u\in {\comps}^*$.
\end{proof}  
Suppose now that $R_4(m'n')\cong R_3(1,n)$ as rings where $m'$ and $n'$ are
relatively prime natural numbers and $n\geq 2$.
This means that there is a ring isomorphism
$\psi : {\comps}[x,y,z]/((z-x)(z-y), x^{n'}y^{m'} - 1)
\rightarrow {\comps}[x,y]/(y^2 - f(x)^2)$ where $f(x) = (x-x^n)/2$. Since
both $x$ and $y$ are units in $R_4(m',n')$ then their images under $\psi$
are also units in $R_3(1,n)$. Hence by Lemma~\ref{lmm:units}
we must have $\psi(x),\psi(y)\in{\comps}^*$ and so
\[
0 = \psi(0) = \psi((z-x)(z-y))
= \psi(z)^2 -(\psi(x) + \psi(y))\psi(z) + \psi(x)\psi(y)
\]
in the ring $R_3(1,n) = {\comps}[x,y]/(y^2 - f(x)^2)$. This means that
the element $\psi(z)$ is a root of a quadratic polynomial over ${\comps}$.
Since every element of $R_3(1,n) = {\comps}[x,y]/(y^2 - f(x)^2)$ has a
representation as $p(x) + q(x)y$ where $p$ and $q$ are uniquely determined
polynomials in $x$ over ${\comps}$, then $\psi(z) = p(x) + q(x)y$ for
some $p,q\in {\comps}[x]$. It is easy to see that $p(x) + q(x)y$ is a root
of a quadratic polynomial over ${\comps}$ in $R_3(1,n)$ if and only
if $q(x) = 0$ as a polynomial, in which case $\psi$ is not surjective and
therefore not an isomorphism.

We summarize in the following.
\begin{theorem}
\label{thm:non-isom}
The four types of rings $R_1, R_2, R_3(m,n)$ and $R_4(m',n')$
from Proposition~\ref{prp:4-rings} are never isomorphic when $(m,n)$ and
$(m',n')$ are pair of relatively prime natural numbers.
\end{theorem}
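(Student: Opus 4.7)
The plan is to distinguish the four rings by invariants that are preserved under any ring isomorphism, proceeding from the coarsest invariant (Krull dimension) down to more delicate invariants that pick out $R_3$ from $R_4$ and handle edge cases. First I would separate $R_2$ from all the rest by a Gröbner basis calculation: since $\{z^2-(x+y)z+xy\}$ (for $R_1$, $R_2$) and the additional binomial generator (for $R_3,R_4$) yield initial ideals $(y^2)$, $(z^2)$, $(z^2,y^m)$ and $(z^2,x^ny^m)$ respectively under $\mathrm{LEX}$ with $x \preceq y \preceq z$, one reads off $\dim R_2 = 2$ and $\dim R_1 = \dim R_3(m,n) = \dim R_4(m,n) = 1$. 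This isolates $R_2$.

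For the three rings of dimension $1$, I would pass to the quotient by the sum $\Sigma$ of the minimal prime ideals, which is a ring-theoretic invariant. After the change of variables $x' = x - z$, $y' = y - z$, the rings become $\comps[x,y]/(xy)$, $\comps[x,y,z]/(xy,(z+y)^m - (z+x)^n)$, and $\comps[x,y,z]/(xy,(z+x)^n(z+y)^m - 1)$. For each, the minimal primes are obtained by setting $x=0$ or $y=0$ and using Lemma~\ref{lmm:6.2} to verify that $y^m - z^n$ is irreducible (hence prime) in $\comps[y,z]$. Computing sums yields $\Sigma_1 = (x,y)$ (maximal), so $R_1/\Sigma_1 \cong \comps$; $\Sigma_3$ corresponds to $(x,y,z^m - z^n)$, so $R_3(m,n)/\Sigma_3 \cong \comps[z]/(z^m(1-z^{n-m}))$, which is nonreduced when $\min(m,n) \ge 2$; and $\Sigma_4$ corresponds to $(x,y,z^{m+n}-1)$, so $R_4(m,n)/\Sigma_4 \cong \comps^{m+n}$, always reduced and never a field (since $m+n \ge 2$). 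These three quotients are mutually distinguishable: $R_1$ by $\Sigma$ being maximal, $R_3$ (with $m,n \ge 2$) by having nilpotents, $R_4$ by being reduced but not a field.

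The main obstacle, and the step that needs genuine care, will be the residual edge case $R_3(1,n)$ with $n \ge 2$: here $R_3(1,n)/\Sigma_3 \cong \comps[z]/(z-z^n)$ is reduced, so the nilpotent argument fails, and one must separate $R_3(1,n)$ from $R_1$ and from every $R_4(m',n')$ by a different invariant. The idea is to eliminate $y$ via the relation $y = x^n$, obtaining $R_3(1,n) \cong \comps[x,z]/((z-x)(z-x^n))$, and then via completing the square to write it as $\comps[x,y]/(y^2 - f(x)^2)$ with $\deg f \ge 2$. Lemma~\ref{lmm:units} then pins down the group of units as $\comps^*$. Under any hypothetical isomorphism $\psi: R_4(m',n') \to R_3(1,n)$, both $\psi(x)$ and $\psi(y)$ must be units, hence scalars in $\comps^*$, which forces $\psi(z)$ to be a root of a quadratic with scalar coefficients; writing $\psi(z) = p(x) + q(x)y$ and observing that $(p+qy)^2 \in \comps[x]\cdot 1 + \comps[x]\cdot y$ collapses only when $q=0$, one concludes that $\psi$ cannot be surjective. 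A parallel comparison distinguishes $R_3(1,n)$ from $R_1$ (where one can check $R_1$ has a non-scalar unit after the change of variables, or equivalently notes $R_1$ is not an integral domain while $R_3(1,n)/\text{(one minimal prime)}$ is). Putting these pieces together yields all six pairwise non-isomorphisms.
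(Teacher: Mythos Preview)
Your approach is essentially identical to the paper's: Krull dimension isolates $R_2$; the quotient $R/\Sigma$ by the sum of the minimal primes separates $R_1$ (where $\Sigma$ is maximal), $R_3(m,n)$ with $m,n\ge 2$ (nilpotents in the quotient), and $R_4(m',n')$ (quotient reduced but not a field); and the unit-group argument via Lemma~\ref{lmm:units} handles the residual case $R_3(1,n)$ versus $R_4(m',n')$.

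One slip worth flagging: you do \emph{not} need a separate invariant to distinguish $R_3(1,n)$ from $R_1$. Your own computation gives $R_3(1,n)/\Sigma_3 \cong \comps[z]/(z-z^n)\cong \comps^n$, which for $n\ge 2$ is not a field, so $\Sigma_3$ is still non-maximal and the criterion you already stated for $R_1$ applies directly. Moreover, both parenthetical alternatives you offer are incorrect: the unit group of $R_1\cong\comps[x,y]/(xy)$ is exactly $\comps^*$ (embed into $\comps[x]\times\comps[y]$ and note units there are pairs of nonzero constants), and both $R_1$ and $R_3(1,n)$ are non-domains each of whose quotients by a minimal prime is a domain, so neither observation distinguishes them. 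This does not damage the overall argument, since the needed separation was already in hand from the $\Sigma$-maximality criterion; just drop the extra step.
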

As a result, although it doesn't present a huge difficulty,
one should be somewhat careful when talking about ``the'' Minkowski
ring of a given finite collection of closed sets of $W$, in particular
the collection consisting of a convex polytope and all its faces.

We will often for simplicity assume that exactly one
endpoint of a closed interval in $W = {\reals}$ is equal to zero, precisely
to obtain the first of the four types of Minkowski ring
in Proposition~\ref{prp:4-rings} with fewer variables in its presentation.

\subsection{The positive integer box case}
Here we consider the set ${\cal{B}^+_d}$ of all $d$-dimensional boxes
$B = \prod_{i=1}^d [a_i,b_i]$ in $W = {\reals}^d$ where $a_i$ and $b_i$
are nonnegative integers. Clearly ${\cal{B}}_d^+$ is
closed under taking Minkowski sums in ${\reals}^d$. By allowing
degenerate boxes, where $a_i=b_i$ is allowed, we see that all the
faces of the $d$-box $B$ as a polytope are also boxes. Hence,
${\cal{B}}^+_d$ contains all ${\ell}$-dimensional boxes 
$B = \prod_{i=1}^d [a_i,b_i]$ in $W$ where ${\ell}\leq d$ and all their
faces as well. Also note that
$M({\cal{B}}_1^+) = M(\{1\},[0,1])$ since the characteristic function
of each nonnegative point ${\1}_{\{a\}}$ and each interval ${\1}_{[a,b]}$
is contained in $M(\{1\},[0,1])$. Note that
any bounded closed $d$-box can be written as a direct sum 
\[
B = [a_1,b_1]\times\cdots\times[a_d,b_d] = \bigoplus_{i=1}^d \pi_i(B),
\]
where $\pi_i : {\reals}^d \rightarrow {\reals}^d$
denotes the projection onto the $i$-th coordinate
$\tilde{x}\mapsto x_i\tilde{e}_i$. Hence,
$B$ is a direct Minkowski sum its edges. By the directness of
the Minkowski sum in this case, $F$ is a face of $B$ if and only
if $F =  \prod_{i=1}^d F_i$ where each $F_i\in \{[\{a_i\}, \{b_i\}, [a_i,b_i]\}$
is a face of the 1-polytope $[a_i,b_i]$. If $x_i$ represents the characteristic
function ${\1}_{\{\tilde{e}_i\}}$ and $y_i$ represents
${\1}_{\pi_i([0,1]^d)} = [\pi_i([0,1]^d)]$, then each $d$-box $B$ in
${\cal{B}}_d^+$ can be represented by the monomial
$x_1^{a_1}\cdots x_d^{a_d}y_1^{b_1-a_1}\cdots y_d^{b_d-a_d}$. Note that
this includes the degenerate case when $a_i=b_i$ and so each $d$-box
and each of its proper faces has the form of a monomial in $x_i$
and $y_i$. Since the Minkowski ring for the $i$-th coordinate edge
interval is by Proposition~\ref{prp:4-rings} given by
$M(\{1\},[0,1]) \cong {\comps}[x_i,y_i]/((y_i-1)(y_i-x_i))$ we can
use the reduction $y_i^2 = (x_i + 1)y_i - x_i$, as in Subsection~\ref{ssec:0b},
and we see that each element in 
${\comps}[x_1,\ldots,x_d,y_1,\ldots,y_d]/((y_1-1)(y_1-x_1),\ldots,
(y_d-1)(y_d-x_d))$ has a reduced form of
\begin{equation}
  \label{eqn:d-reduced}
f(\tilde{x},\tilde{y})
= \sum_{A\subseteq \{1,\ldots,d\}}f_A(\tilde{x})\prod_{i\in A}y_i,
\end{equation}
where $\tilde{x} = (x_1,\ldots,x_d)$, $\tilde{y} = (y_1,\ldots,y_d)$
and $f_A$ is a polynomial in ${\comps}[x_1,\ldots,x_d]$.
Without rehashing previous arguments, an expression of this above
form represents the characteristic function of translated $q$-boxes
in ${\cal{B}}_d^+$ where $q\in \{0,\ldots,d\}$, where the relative
interiors of the cubes of the same dimension are disjoint for each $q$.
Hence, if $f(\tilde{x},\tilde{y}) = 0$ in the above quotient ring, then we
must have $f = 0$ in ${\comps}[x_1,\ldots,x_d,y_1,\ldots,y_d]$.
As a result, the reduced form from (\ref{eqn:d-reduced}) is a normal form
and there are no other relations that elements $x_i$ and $y_i$ satisfy.
Hence, for the Minkowski ring $M({\cal{B}}_d^+)$ of
all the $d$-boxes and its faces in $W$ we therefore have
the following.
\begin{corollary}
\label{cor:d-box}
The surjection
$\phi : {\comps}[x_1,\ldots,x_d,y_1,\ldots,y_d]\twoheadrightarrow
M({\cal{B}}_d^+)$
given by $\phi(x_i) = {\1}_{\{\tilde{e}_i\}}$ and
$\phi(y_i) = [\pi_i([0,1]^d)]$ has
$\ker(\phi) = ((y_1-1)(y_1-x_1),\ldots,(y_d-1)(y_d-x_d))$ and
so
\[
M({\cal{B}}_d^+) \cong
{\comps}[x_1,\ldots,x_d,y_1,\ldots,y_d]/((y_1-1)(y_1-x_1),\ldots,
(y_d-1)(y_d-x_d)).
\]
In particular we have
\[
M({\cal{B}}_d^+) = \bigotimes_{i=1}^d M({\cal{B}}_1^+)
= \bigotimes_{i=1}^d M(\{1\},[0,1]),
\]
where the tensor product is taken over $\comps$.
\end{corollary}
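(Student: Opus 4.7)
The plan is to establish three things in sequence: (i) that $\phi$ is well-defined with $(y_i-1)(y_i-x_i)\in\ker(\phi)$ for each $i$, (ii) that the normal form~(\ref{eqn:d-reduced}) exhausts the quotient, and (iii) that $\phi$ is injective on this normal form; the tensor product statement then falls out of the structural description of the presenting ideal. The opening observation is that every $d$-box $B=\prod_{i=1}^d[a_i,b_i]$ in $\cal B_d^+$ decomposes as a Minkowski sum $\sum_i a_i\tilde e_i + \sum_i (b_i-a_i)\pi_i([0,1]^d)$, so under $\phi$ it corresponds exactly to the monomial $\prod_i x_i^{a_i}y_i^{b_i-a_i}$; this shows $\phi$ lands in, and surjects onto, $M(\cal B_d^+)$. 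The inclusion $(y_i-1)(y_i-x_i)\in\ker(\phi)$ follows from case~(A) of Proposition~\ref{prp:4-rings} applied coordinate-wise, since restricting $\phi$ to $\comps[x_i,y_i]$ factors through $M(\{\tilde e_i\},\pi_i([0,1]^d))\cong M(\{1\},[0,1])$.

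Next I would establish the reduced form~(\ref{eqn:d-reduced}): the single-variable relation $y_i^2=(x_i+1)y_i-x_i$ (valid modulo $(y_i-1)(y_i-x_i)$) lets us repeatedly lower the $y_i$-degree of any monomial below $2$, so iterating over $i$ produces the desired form $\sum_{A\subseteq\{1,\ldots,d\}} f_A(\tilde x)\prod_{i\in A}y_i$ with $f_A\in\comps[\tilde x]$. The main work is then step~(iii): showing that if such an element lies in $\ker(\phi)$, then each $f_A$ is the zero polynomial. The key combinatorial fact to exploit is that a monomial $\prod_{i\in A}x_i^{a_i}\prod_{i\notin A}x_i^{a_i}\prod_{i\in A}y_i^{c_i}$ (with $c_i\geq 1$ for $i\in A$, and where the $y$-exponents are at most $1$ after reduction, but general powers of $y_i$ are allowed through $f_A$'s absorbing them — so I should re-express: treat the whole reduced element as a sum indexed by subsets $A$ where in the $A$-summand, coordinate $i\in A$ contributes a $1$-dimensional factor $[\{a_i,b_i\}]$ and $i\notin A$ contributes the point factor) is sent by $\phi$ to the indicator of a specific $|A|$-dimensional translated box in $\cal B_d^+$.

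The hard part will be carefully disentangling the resulting linear combination of box indicators. The strategy is to stratify by dimension: group terms of the reduced sum according to the dimension $q=|A|$ of the box their image occupies, and within each dimension argue that the $q$-boxes arising from distinct $\comps$-monomials either coincide (and can be merged) or have relative-interior-disjoint images. Then I evaluate the supposed identity $\phi(f)=0$ at interior points of top-dimensional boxes (those with $q$ maximal), which must be disjoint from all lower-dimensional box supports and from all other top-dimensional boxes at generic interior points; this forces the coefficients of the top-dimensional terms to vanish, and induction on $q$ finishes the argument. This is the same flavor of evaluation argument used in Subsections~(A)--(D) of Section~\ref{sec:simple}, now carried out in $d$ coordinates; once executed, $\ker(\phi)=((y_1-1)(y_1-x_1),\ldots,(y_d-1)(y_d-x_d))$.

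For the tensor product conclusion, I would observe that the presenting ideal is generated by elements $r_i=(y_i-1)(y_i-x_i)$ whose variable sets $\{x_i,y_i\}$ are pairwise disjoint across $i$. Therefore
\[
\frac{\comps[x_1,\ldots,x_d,y_1,\ldots,y_d]}{(r_1,\ldots,r_d)}
\;\cong\;\bigotimes_{i=1}^d \frac{\comps[x_i,y_i]}{(r_i)}
\;\cong\;\bigotimes_{i=1}^d M(\{1\},[0,1])
\;=\;\bigotimes_{i=1}^d M(\cal B_1^+),
\]
where the first isomorphism is the standard fact that a quotient by an ideal that splits across disjoint variable blocks is the tensor product of the block quotients, and the second uses case~(A) of Proposition~\ref{prp:4-rings}.
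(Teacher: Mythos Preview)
Your proposal is correct and follows essentially the same route as the paper: establish the containment $(y_i-1)(y_i-x_i)\in\ker(\phi)$ via the one-dimensional case, reduce to the normal form~(\ref{eqn:d-reduced}) using $y_i^2=(x_i+1)y_i-x_i$, and then argue injectivity by noting that the monomials $\tilde{x}^{\tilde{a}}\prod_{i\in A}y_i$ map to unit $|A|$-boxes whose relative interiors are pairwise disjoint within each dimension, so a vanishing image forces all coefficients to vanish. Your induction on $q=|A|$ and evaluation at generic interior points is a slightly more explicit version of the paper's one-line assertion that ``the relative interiors of the cubes of the same dimension are disjoint for each $q$,'' and the tensor-product conclusion is drawn in the same way.
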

The above corollary is a special case of what we will discuss in more
detail in Section~\ref{sec:Cartesian}.

\subsection{The general integer box case}
Similarly, if ${\cal{B}}_d$ denotes the set of all $d$-dimensional boxes
$B = \prod_{i=1}^d [a_i,b_i]$ where $a_i,b_i\in {\ints}$ are integers, then
$M({\cal{B}}_1) = M(\{-1\}, \{1\}, [0,1])$ and
\[
M({\cal{B}}_1) \cong {\comps}[x,x^{-1},y]/((y-1)(y-x))
\cong {\comps}[x,y]^{\pm}/((y-1)(y-x))
\]
where the isomorphism is given as for $M(\{1\}, [0,1])$, namely
$x\mapsto {\1}_{\{1\}}$ and $y\mapsto {\1}_{[0,1]}$. Note that in
${\comps}[x,y]^{\pm}/((y-1)(y-x))$ we have $y^{-1} = 1 + x^{-1} - x^{-1}y$
and so the inverse of $y$ is solely in terms of $1, x^{-1}$ and $y$.
This corresponds to the inverse of ${\1}_{[0,1]}$ being
$-{\1}_{]-1,0[}$
in $M({\cal{B}}_1)$, which is a special case of the inverse of
a general polytope $P$ of dimension $d$
in a Minkowski ring containing $[P]$ and its
inverse is given by $[P]^{-1} = (-1)^d[-{\mathring{P}}]$, where $\mathring{P}$
denotes the {\em relative interior} of $P$ in
${\reals}^d$~\cite[p.~13]{Lawrence}, \cite[p.~3]{Morelli}.
Since we similarly have
\[
M({\cal{B}}_d) = \bigotimes_{i=1}^d M({\cal{B}}_1)
= \bigotimes_{i=1}^d M(\{-1\},\{1\},[0,1]),
\]
we then have analogously the following.
\begin{corollary}
\label{cor:d-box-pm}
The surjection
$\phi : {\comps}[x_1,\ldots,x_d,y_1,\ldots,y_d]^{\pm}\twoheadrightarrow
M({\cal{B}}_d)$
given by $\phi(x_i) = {\1}_{\{\tilde{e}_i\}}$ and
$\phi(y_i) = [\pi_i([0,1]^d)]$ has
$\ker(\phi) = ((y_1-1)(y_1-x_1),\ldots,(y_d-1)(y_d-x_d))$ and
so
\[
M({\cal{B}}_d) \cong
{\comps}[x_1,\ldots,x_d,y_1,\ldots,y_d]^{\pm}/((y_1-1)(y_1-x_1),\ldots,
(y_d-1)(y_d-x_d)).
\]
\end{corollary}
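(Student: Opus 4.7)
The plan is to mimic the proof of Corollary~\ref{cor:d-box} in the Laurent setting, exploiting the fact that the one-dimensional factor is already handled by Corollary~\ref{cor:3-rings}(1) and the tensor-product observation stated just before the corollary. First I would confirm surjectivity of $\phi$: every box $B=\prod_{i=1}^d[a_i,b_i]\in\mathcal{B}_d$ with $a_i,b_i\in\ints$ and $a_i\leq b_i$ is the image of the Laurent monomial $\prod_{i=1}^d x_i^{a_i}y_i^{b_i-a_i}$, where negative exponents on the $x_i$ are legitimate since $\{-\tilde{e}_i\}$ is a multiplicative inverse of $\{\tilde{e}_i\}$ in $M(\mathcal{B}_d)$.

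Next I would verify that each $(y_i-1)(y_i-x_i)$ lies in $\ker(\phi)$: restricted to the $i$-th coordinate axis this is exactly the relation from Corollary~\ref{cor:3-rings}(1) applied to $M(\mathcal{B}_1)=M(\{-1\},\{1\},[0,1])$. Hence $J:=((y_1-1)(y_1-x_1),\ldots,(y_d-1)(y_d-x_d))\subseteq \ker(\phi)$.

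For the reverse containment, I would repeat the normal-form argument of Corollary~\ref{cor:d-box}. Using the reductions $y_i^2=(x_i+1)y_i-x_i$ coming from the generators of $J$, every element of $\comps[x_1,\ldots,x_d,y_1,\ldots,y_d]^{\pm}/J$ admits a reduced form
\[
f(\tilde x,\tilde y)=\sum_{A\subseteq\{1,\ldots,d\}} f_A(\tilde x)\prod_{i\in A}y_i,
\]
where now $f_A\in\comps[x_1^{\pm 1},\ldots,x_d^{\pm 1}]$ is a Laurent polynomial. Applying $\phi$ expresses this as a $\comps$-linear combination of indicator functions of boxes whose $i$-th coordinate factor is either a single integer point $\{a_i\}$ (when $i\notin A$) or a unit interval $[a_i,a_i+1]$ (when $i\in A$). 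Grouping these boxes by the subset $A$, the relative interiors of boxes within the same group are pairwise disjoint. Hence if $\phi(f)=0$, evaluating at generic interior points of the top-dimensional boxes forces the corresponding $f_A$ to vanish, and successive evaluation at lower-dimensional faces eliminates the remaining contributions. This yields $\ker(\phi)=J$, giving the presentation.

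The main obstacle is the normal-form argument: one must check that permitting negative integer translates does not create new identities among the indicator functions. This is handled automatically because the disjointness of relative interiors within each fixed-dimension group is a translation-invariant property, so the argument of the nonnegative case carries over verbatim once Laurent coefficients are allowed.
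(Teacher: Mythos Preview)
Your approach is correct and essentially the same as the paper's: the paper first handles $M(\mathcal{B}_1)\cong\comps[x,y]^{\pm}/((y-1)(y-x))$ and then invokes the same normal-form argument used for Corollary~\ref{cor:d-box}, so your plan to repeat that argument with Laurent coefficients in the $x_i$ is exactly what is intended. One small point you glossed over and that the paper makes explicit: in the Laurent ring the variables $y_i$ are invertible, so to reach the reduced form $\sum_{A}f_A(\tilde x)\prod_{i\in A}y_i$ you must also eliminate negative powers of $y_i$, not just use $y_i^2=(x_i+1)y_i-x_i$. The relation $(y_i-1)(y_i-x_i)=0$ gives $y_i^{-1}=1+x_i^{-1}-x_i^{-1}y_i$ in the Laurent ring, which is the extra reduction needed; once you note this, your disjoint-relative-interiors argument goes through exactly as you describe.
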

Note that by the above Corollary~\ref{cor:d-box-pm} we clearly have that
$M^{\pm}({\cal{B}}_d^+) = M({\cal{B}}_d)$ for each $d\geq 1$, that is,
each $x_i$ and $y_i$ has a multiplicative inverse in $M({\cal{B}}_d)$.

As $d$-dimensional boxes provide one of the simplest examples of a class
of closed sets that are closed under taking Minkowski sums, what
Corollaries~\ref{cor:d-box} and~\ref{cor:d-box-pm} state is basically
the following:
\begin{quote}
  All the relations in the Minkowski ring of closed boxes
  in ${\ints}^d$ are derivations from the relations that the
  $1$-dimensional sides of the boxes satisfy as closed intervals
  with integer endpoints.
\end{quote}
\begin{rmk*}
Note that for $d=1$ we have a fully reduced (or normal) form $f(x)y + g(x)$ for
an element in $M({\cal{B}}_1)$, as indicated in Subsection~\ref{ssec:0b}
of the proof of Proposition~\ref{prp:4-rings}. From a more geometric point
of view however, there is another unique normal form for
of each closed interval in ${\cal{B}_1}$
by noting that each closed interval in ${\cal{B}_1}$ is a disjoint union
of elements from ${\ints}$ and translates of $]0,1[$, the relative interior of
$[0,1]$. In other words, {\em each closed interval in ${\cal{B}}$
can be tiled by translates of $\{1\}$ and $]0,1[$}.
Note that the indicator function of $]0,1[$ is represented by
$\mathring{y} = y - 1 - x$ in ${\comps}[x,y]^{\pm}/((y-1)(y-x))$ so
more specifically, a tiling of $[0,n]$ corresponding to $y^n$ is given by
\begin{equation}
\label{eqn:y-tiling}
y^n = \sum_{i=0}^n x^i + \mathring{y}\sum_{i=0}^{n-1}x^i.
\end{equation}
Since this form for $y^n$ corresponds to a geometric tiling, the elements $x^i$
and $\mathring{y}x^i$ correspond to indicator functions of disjoint
subsets of ${\reals}$. 
That (\ref{eqn:y-tiling}) follows from the defining relation
$(y-1)(y-x) = 0$ can be
best seen by first localizing at $1-x$ and providing it with an
inverse\footnote{this can be argued since $S = \{{(1-x)}^n : n\in\ints\}$ is
  multiplicatively closed and hence adjoining $S^{-1}$ is legitimate}
and then rewriting (\ref{eqn:y-tiling}) as
$(1-x)y^n = 1-x^{n+1} + (y - 1 - x)(1-x^n)$ which indeed follows from
$(y-1)(y-x) = 0$.
\end{rmk*}
In fact, it also follows directly from the defining relation $(y-1)(y-x) = 0$
that $\mathring{y} = -xy^{-1}$ 
we can tile each closed interval by indicator functions corresponding to
translates of $1$ and $y^{-1}$ as well. We summarize in the
following observation.
\begin{observation}
\label{obs:dim-tiling}
For $M({\cal{B}}_d)$ presented as 
\[
M({\cal{B}}_d) \cong
{\comps}[x_1,\ldots,x_d,y_1,\ldots,y_d]^{\pm}/((y_1-1)(y_1-x_1),\ldots,
(y_d-1)(y_d-x_d)).
\]
as in Corollary~\ref{cor:d-box-pm}, we have that any closed
interval, as a translate of $y_i^n$ for some $i$ and $n$, can
be tiled as in (\ref{eqn:y-tiling}) by translates of $1$ and
$\mathring{y_i}$ on one hand and also by translates of
$1$ and $y_i^{-1}$ on the other hand. Further, any
tiling of such translates of intervals $y_i^n$ will yield a
corresponding $d$-dimensional tiling of any $n_1\times\cdots\times n_d$
closed box in $M({\cal{B}}_d)$ which is a product of closed
intervals of lengths $n_1,\ldots, n_d$.
\end{observation}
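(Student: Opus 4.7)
The plan is to verify the two one-dimensional tilings first and then bootstrap to $d$ dimensions using the tensor product decomposition from Corollary~\ref{cor:d-box-pm}. Throughout, I work modulo the single relation $(y-1)(y-x)=0$, i.e.\ $y^2 = (x+1)y - x$, and use the abbreviation $S_k = 1 + x + x^2 + \cdots + x^k$ with the convention $S_{-1}=0$.

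For the $\mathring{y}$-tiling, I would prove the equivalent closed form $y^n = y\,S_{n-1} - x\,S_{n-2}$ by induction on $n$. The base cases $n=1$ and $n=2$ are immediate (the second being exactly the defining relation). For the step, multiply the inductive hypothesis by $y$ and substitute $y^2 = (x+1)y - x$ to get $y^{n+1} = y\bigl((x+1)S_{n-1} - xS_{n-2}\bigr) - xS_{n-1}$, and then use $(x+1)S_{n-1} - xS_{n-2} = S_{n-1} + x(S_{n-1}-S_{n-2}) = S_{n-1} + x^n = S_n$. Rearranging back, $y^n = S_n + (y-1-x)S_{n-1}$, which is exactly~(\ref{eqn:y-tiling}). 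As an alternative (the route hinted in the remark), one can localize at $1-x$ and verify the equivalent identity $(1-x)y^n = (1-x^{n+1}) + (y-1-x)(1-x^n)$, which follows from $(y-1)(y-x)=0$ by a direct expansion.

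For the second tiling, the key observation is $\mathring{y_i} = -x_i y_i^{-1}$. This comes from $y\mathring{y} = y(y-1-x) = y^2 - y - xy = (x+1)y - x - y - xy = -x$. Substituting into (\ref{eqn:y-tiling}) yields $y^n = \sum_{i=0}^n x^i - y^{-1}\sum_{i=1}^n x^i$, and by the inverse formula $[P]^{-1} = (-1)^d[-\mathring{P}]$ recalled just above the observation, $-x^i y^{-1}$ is the indicator of $]i-1,i[$, so this is a genuine tile-by-tile decomposition of $[0,n]$. (Both tilings in~(\ref{eqn:y-tiling}) and its $y^{-1}$-version are disjoint unions because the underlying intervals and points in $\reals$ are, and $M({\cal{B}}_1)$ is a subring of the Minkowski ring on $\reals$, so additive identities there reflect set-theoretic disjointness.)

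For the $d$-dimensional extension, I would invoke Corollary~\ref{cor:d-box-pm}, which presents $M({\cal{B}}_d)$ as the tensor product of $d$ copies of $M({\cal{B}}_1)$, one per coordinate. A general $n_1 \times \cdots \times n_d$ box corresponds to the monomial $\prod_{i=1}^d y_i^{n_i}$, so applying either one-dimensional tiling in each factor and expanding the product distributes the $d$-dimensional box as a sum of products of one-dimensional tiles; since relative interiors in distinct coordinates are independent in the tensor decomposition, the product tiles have pairwise disjoint relative interiors. There is no real obstacle here: the only substantive computation is the identity $y\mathring{y} = -x$ and the inductive verification of~(\ref{eqn:y-tiling}), and the $d$-dimensional assertion is a formal consequence of the tensor decomposition already established.
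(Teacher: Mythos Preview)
Your proposal is correct and follows essentially the same route as the paper: the paper's justification for Observation~\ref{obs:dim-tiling} is the preceding remark, which establishes (\ref{eqn:y-tiling}) from the relation $(y-1)(y-x)=0$ and notes $\mathring{y}=-xy^{-1}$, and then the $d$-dimensional statement is read off from the tensor decomposition of Corollary~\ref{cor:d-box-pm}. The only difference is that you supply a direct induction for (\ref{eqn:y-tiling}) as your primary argument, whereas the paper sketches the localization-at-$(1-x)$ verification (which you also mention); both are valid and amount to the same identity.
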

By (\ref{eqn:y-tiling}) we see that any Laurent polynomial
$f(x,y)\in {\comps}[x,x^{-1}y]$ mapped to $M({\cal{B}}_1)$ can be written
as a unique Laurent polynomial
$\mathring{f}(x,\mathring{y})\in {\comps}[x,x^{-1},\mathring{y}]$
or a unique Laurent polynomial $f_{-1}(x,y^{-1})\in{\comps}[x,x^{-1},y^{-1}]$.
Since monomials $x^i\mathring{y}$ where $i\in\ints$ on one hand and
$x^iy^{-1}$ where $i\in\ints$ on the other hand correspond to disjoint
open intervals, we see that $f\in ((y-1)(y-x))$, the kernel of the
surjection ${\comps}[x,y]^{\pm}\twoheadrightarrow M^{\pm}(\{1\},[0,1])$,
if and only both $\mathring{f}$ and $f_{-1}$ are the zero Laurent polynomials.

Such geometric tiling considerations will be exploited in the next section.

\section{The affine  arrangement and identities}
\label{sec:Coxeter}

The goal of this section is firstly to consider explicitly
a more interesting example of
a Minkowski ring, namely the Minkowski ring of certain convex sets in
the Euclidean plane ${\reals}^2$ and secondly to investigate
properties of some identities that hold in general Minkowski rings.

\subsection{The Coxeter arrangement}
Here we consider the Minkowski ring of convex sets in the
Euclidean plane ${\reals}^2$
that are formed by the intersection of horizontal, vertical and slanted
half-planes of the form
\begin{equation}
\label{eqn:half-planes}
\begin{split}
H_h^-(\ell) = \{ (x,y)\in {\reals}^2 : y\leq \ell \}, \\
H_h^+(\ell) = \{ (x,y)\in {\reals}^2 : y\geq \ell \}, \\
H_v^-(\ell) = \{ (x,y)\in {\reals}^2 : x\leq \ell \}, \\
H_v^+(\ell) = \{ (x,y)\in {\reals}^2 : x\geq \ell \}, \\
H_s^-(\ell) = \{ (x,y)\in {\reals}^2 : y\leq x + \ell \}, \\
H_s^+(\ell) = \{ (x,y)\in {\reals}^2 : y\geq x + \ell \},
\end{split}
\end{equation}
where $\ell\in\ints$ is an arbitrary integer. The convex sets
are here bounded by a discrete collection of lines in ${\reals}^2$
of the form $y=\ell$, $x=\ell$ and $y = x+\ell$, where $\ell\in\ints$.
Such a discrete collection of lines (or hyperplanes in ${\reals}^d$)
is called an {\em affine Coxeter arrangement}. These convex sets formed
in this way are exactly those that are bounded by the closed convex
polygons in ${\reals}^2$ such that each polygon has (i) its vertices
in ${\ints}^2$ and (ii) each side has slope in $\{0, 1, \infty\}$,
that is, each side is horizontal, vertical or of slope one.
For a more symmetric visualization we apply a linear
transformation to obtain an equivalent collection
of convex polygons with vertices in
${\ints}(1,0) + {\ints}(\frac{1}{2},\frac{\sqrt{3}}{2})$,
the regular triangular grid in ${\reals}^2$ and sides of slopes
$\{0, \pm\sqrt{3}\}$. We will denote this collection of closed
convex sets by ${\cal{A}}$ and the corresponding
Minkowski ring by $M^{\pm}({\cal{A}})$.
Let $\llbracket\tilde{p}_1,\ldots,\tilde{p}_n\rrbracket$
denote the closed convex hull of points $\tilde{p}_1,\ldots,\tilde{p}_n$
and so the corresponding characteristic
function is ${\1}_{\llbracket\tilde{p}_1,\ldots,\tilde{p}_n\rrbracket}$. If
$O = (0,0)$, ${A} = (1,0)$
and ${B} = (\frac{1}{2},\frac{\sqrt{3}}{2})$ then,
since $d=2$, our convex sets are all
tiled by open equilateral triangles with side length of $1$, side slopes
of $0$ or $\pm\sqrt{3}$ and vertices from
${\ints}(1,0) + {\ints}(\frac{1}{2},\frac{\sqrt{3}}{2})$.
Therefore we have
\[
M^{\pm}({\cal{A}}) = M^{\pm}(\llbracket{A}\rrbracket, \llbracket{B}\rrbracket, \llbracket{O}, {A}\rrbracket,
\llbracket{O},{B}\rrbracket, \llbracket{A},{B}\rrbracket,
\llbracket{O},{A},{B}\rrbracket)
\]
\comment{ 
\[
\begin{split}
  M({\cal{A}}) = & M\left(\{(1,0)\},
  \left\{\left(\frac{1}{2},\frac{\sqrt{3}}{2}\right)\right\},
              [(0,0), (1,0)],
              \left[(0,0), \left(\frac{1}{2},\frac{\sqrt{3}}{2}\right)\right],
              \left[(1,0), \left(\frac{1}{2},\frac{\sqrt{3}}{2}\right)\right],
              \right. \\
              & \left.\left[(0,0), (1,0),
                \left(\frac{1}{2},\frac{\sqrt{3}}{2}\right)\right]\right)
\end{split}
\]
}
and a $\comps$-algebra epimorphism
$\phi : {\comps}[x_1,x_2,y_1,y_2,y_3,z]^{\pm}\twoheadrightarrow M^{\pm}({\cal{A}})$ 
given by
\[
\phi(x_1) = {\1}_{\llbracket{A}\rrbracket}, \ \
\phi(x_2) = {\1}_{\llbracket{B}\rrbracket}, \ \
\phi(y_1) = {\1}_{\llbracket{O},{A}\rrbracket}, \ \
\phi(y_2) = {\1}_{\llbracket{O},{B}\rrbracket}, \ \ 
\phi(y_3) = {\1}_{\llbracket{A},{B}\rrbracket}, \ \ 
\phi(z) = {\1}_{\llbracket{O},{A},{B}\rrbracket}.
\]
\begin{observation}
\label{obs:Mink-ring-A}
The Minkowski ring of convex sets in the Euclidean plane
that are formed by the intersection of half-planes of the form
from (\ref{eqn:half-planes}) can be presented by $M^{\pm}({\cal{A}})$
and there is a $\comps$-algebra isomorphism
\begin{equation}
\label{eqn:isom}  
M^{\pm}({\cal{A}})\cong {\comps}[x_1,x_2,y_1,y_2,y_3,z]^{\pm}/I
\end{equation}
for some ideal $I$ of ${\comps}[x_1,x_2,y_1,y_2,y_3,z]^{\pm}$.
\end{observation}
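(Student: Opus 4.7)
The plan is to verify that $\phi$ is a well-defined surjective $\comps$-algebra homomorphism, apply the first isomorphism theorem to obtain $I=\ker(\phi)$, and then justify the implicit claim that every bounded closed convex subset of ${\reals}^2$ cut out by the half-planes of (\ref{eqn:half-planes}) has its indicator function in $M^{\pm}({\cal A})$. Each of the six generators $\llbracket A\rrbracket$, $\llbracket B\rrbracket$, $\llbracket O,A\rrbracket$, $\llbracket O,B\rrbracket$, $\llbracket A,B\rrbracket$, $\llbracket O,A,B\rrbracket$ is a nonempty convex polytope, and hence by Definition~\ref{def:Minks-poly} has a multiplicative inverse in $M^{\pm}({\cal P}({\reals}^2))$. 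The universal property of the Laurent polynomial ring then yields a unique $\comps$-algebra homomorphism $\phi$ with the prescribed images, and by construction its image is $M^{\pm}({\cal A})$; the first isomorphism theorem delivers the stated ring isomorphism with $I=\ker(\phi)$.

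To confirm that $M^{\pm}({\cal A})$ coincides with the Minkowski ring of the \emph{entire} class of bounded convex sets cut out by (\ref{eqn:half-planes}), I would show that every such polygon $P$ has $[P]\in M^{\pm}({\cal A})$. Any such $P$ has at most six edges (two in each of the three slope directions), and traversing $\partial P$ counterclockwise produces edge-lengths $a_1,\ldots,a_6\ge 0$ subject to a pair of linear closure conditions. A short direct computation shows that these closure conditions are precisely equivalent to the existence of nonnegative integers $a,b,c_1,c_2,c_3$ for which $P$ is a lattice translate of the Minkowski sum $aT+b(-T)+c_1L_1+c_2L_2+c_3L_3$, where $T=\llbracket O,A,B\rrbracket$ is the up-triangle and $L_1,L_2,L_3$ are the three unit edges $\llbracket O,A\rrbracket$, $\llbracket O,B\rrbracket$, $\llbracket A,B\rrbracket$. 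The lattice translation corresponds to multiplication by $x_1^i x_2^j$ with $i,j\in{\ints}$, and the components $aT$ and $c_iL_i$ supply the factors $z^a$ and $y_i^{c_i}$, so only the down-triangle contribution $[-T]^b$ requires further work.

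The one nontrivial ingredient is showing $[-T]\in M^{\pm}({\cal A})$, and for this I would apply the general identity $[Q]^{-1}=(-1)^d[-\mathring{Q}]$ recorded earlier in the excerpt. With $d=2$ and $Q=T$ this gives $z^{-1}=[-\mathring{T}]$, placing the indicator of the open reflected triangle already in $M^{\pm}({\cal A})$. Adding the boundary via inclusion--exclusion on the three edges of $-T$ (translates of $L_1,L_2,L_3$ with indicators $x_1^{-1}y_1$, $x_2^{-1}y_2$, $x_1^{-1}x_2^{-1}y_3$) and the three vertices ($1$, $x_1^{-1}$, $x_2^{-1}$) produces the explicit Laurent-polynomial expression $[-T]=z^{-1}+x_1^{-1}y_1+x_2^{-1}y_2+x_1^{-1}x_2^{-1}y_3-1-x_1^{-1}-x_2^{-1}$ inside $M^{\pm}({\cal A})$, completing the argument. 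The main obstacle is the mixed-decomposition bookkeeping: verifying that the two linear closure conditions on $(a_1,\ldots,a_6)$ are exactly what is needed to force the three differences $a_1-a_4$, $a_5-a_2$, $a_3-a_6$ to agree, and that the sign of this common value correctly toggles between the up- and down-triangle summand. Once this decomposition is in hand, the rest of the proof is routine.
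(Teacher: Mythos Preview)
Your argument is correct, and the first-isomorphism-theorem step matches the paper, but the justification that $M^{\pm}({\cal A})$ equals the Minkowski ring of the full class of lattice polygons follows a genuinely different route from the paper's.

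The paper argues additively: it observes (in one sentence) that every convex polygon in the class is tiled by lattice points, open unit edges, and open unit up/down triangles, so its indicator function is an integer linear combination of translates of $1,\mathring{y_1},\mathring{y_2},\mathring{y_3},\mathring{z},z^{-1}$, each translate being a monomial $x_1^ix_2^j$ times a basic piece. This is exactly the ``second normal form'' of Proposition~\ref{prp:norm-2}, and the paper revisits it later when proving Theorem~\ref{thm:A}. Your argument is multiplicative: you decompose the polygon itself as a Minkowski sum $aT+b(-T)+c_1L_1+c_2L_2+c_3L_3$ up to translation, so that $[P]$ is a single Laurent monomial in $z,[-T],y_1,y_2,y_3,x_1,x_2$, and then you express $[-T]$ via $z^{-1}$ and boundary terms. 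Your edge-length bookkeeping is right---the closure conditions~(\ref{eqn:nmN}) force $n_1-m_1=n_2-m_2=n_3-m_3$, and the sign of this common value indeed selects which of $a,b$ is nonzero. Your approach gives a cleaner reason why $[P]$ lies in the ring (one monomial rather than a sum), while the paper's tiling viewpoint is what drives the later normal-form analysis; the paper also records essentially your formula for the closed down-triangle as $z^*$ just before Theorem~\ref{thm:A}.
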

The main task of this section is to determine the ideal $I$ in
(\ref{eqn:isom}), thereby obtaining a complete description of all the
Minkowski ring theoretic relations the convex sets in ${\cal{A}}$ satisfy. 

By restricting to one-dimensional closed sets of ${\reals}^2$ we obtain
by Corollary~\ref{cor:d-box-pm} in the previous Section~\ref{sec:simple}
that the analogous elements in
\begin{equation}
\label{eqn:dim-1}
G_1 = \{(y_1 - 1)(y_1 - x_1), (y_2 - 1)(y_2 - x_2), (y_3 - x_1)(y_3 - x_2)\}
\end{equation}
are all contained in the ideal $I$. Note that we can use the elements
in $G_1$ as a system of reductions contained in a Gr\"{o}bner Basis
w.r.t.~LEX
and write each $y_i^2$ in terms of $x_jy_i$ and $x_{\ell}$. This corresponds
to writing each interval of length two as the sum of two intervals of
length one with their common point subtracted.

Each of the mixed products $y_iy_j$ can be reduced further by noting
that each $y_iy_j$, where $i\neq j$, corresponds to a diamond composite
by two triangular regions corresponding to translates of $z$ and $z^{-1}$.
We will describe shortly what exactly constitutes a reduction from
a relation in $I$. But first we give one description of a closed
convex set in ${\cal{A}}$.

Since $O = (0,0)$, ${A} = (1,0)$
and ${B} = (\frac{1}{2},\frac{\sqrt{3}}{2})$, then
any side of a closed convex set $S$
in ${\cal{A}}$ has a slope of $0, \pm \sqrt{3}$ and by starting
at one vertex of $S$ and traversing counterclockwise along the edges, we
see that $S$ is the convex hull of six points
$O_2, A_1, A_2, B_1, B_2, O_1$ in the hexagonal grid in counterclockwise order
where
$\overrightarrow{O_2A_1} = n_1\overrightarrow{OA}$,
$\overrightarrow{A_1A_2} = m_2\overrightarrow{OB}$,
$\overrightarrow{A_2B_1} = n_3\overrightarrow{AB}$,
$\overrightarrow{B_1B_2} = m_1\overrightarrow{AO}$,
$\overrightarrow{B_2O_1} = n_2\overrightarrow{BO}$ and
$\overrightarrow{O_1O_2} = m_3\overrightarrow{BA}$,
for some nonnegative integers $n_1,n_2,n_3,m_1,m_2,m_3$ satisfying
\begin{equation}
\label{eqn:nmN}
n_1 + m_2 + m_3 = m_1 + n_2 + m_3 = m_1 + m_2 + n_3 := N.
\end{equation}
Note that the vertices are here allowed to overlap, for example
$m_i = 0$ for each $i$ will result in a scaled version of the
closed equilateral triangle $\triangle(OAB)$. Hence, each closed set
in ${\cal{A}}$ is therefore obtained by
(i) scaling up $N$-fold the triangle $\triangle(OAB)$,
(ii) capping off $m_3$-fold copy of $\triangle(OAB)$ at the lower left vertex,
(iii) an $m_2$-fold copy of $\triangle(OAB)$ at the lower right vertex,
(iv) an $m_1$-fold copy of $\triangle(OAB)$ at the top vertex, and finally
(v) translate it within the hexagonal lattice.
We note that the entire hexagonal lattice
corresponds to $\{x_1^mx_2^n : m, n \in \ints\}\subseteq
  {\comps}[x_1,x_2,y_1,y_2,y_3,z]^{\pm}/I$, 
so if a closed convex set $S\in M^{\pm}({\cal{A}})$ is presented in terms
of a polynomial $f \in {\comps}[x_1,x_2,y_1,y_2,y_3,z]^{\pm}/I$,
then a translate of $S$ is presented as $x_1^mx_2^nf$ for some
$(m,n)\in {\ints}^2$. This form is indeed unique.
By definition of the homomorphism $\phi$ from above
we have the following.
\begin{proposition}
\label{prp:norm-1}
The preimage of any closed convex set $S$ in ${\cal{A}}$ in the homomorphism
$\phi$ from above has the form $x_1^ax_2^b{\mathbf{c}}_S$ where
\[
  {\mathbf{c}}_S = z^N - (z^{m_3} - y_3^{m_3})
  -x_1^{m_3+n_1}(z^{m_2} - y_2^{m_2})
  -x_2^{m_3+n_2}(z^{m_1} - y_1^{m_1}),
\]
the nonnegative integers $n_1,n_2,n_3,m_1,m_2,m_3$ satisfy
(\ref{eqn:nmN}) and $a,b\in {\ints}$ are integers.
This will be called the {\em first normal form}
of a closed convex set in ${\comps}[x_1,x_2,y_1,y_2,y_3,z]^{\pm}/I$.
\end{proposition}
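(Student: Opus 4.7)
The plan is to realize $[S]$ via inclusion--exclusion as the indicator of a circumscribing upward equilateral triangle minus three closed corner triangles with the three shared edges restored, so that each summand is the image under $\phi$ of a single monomial or binomial in the given generators. Since lattice translations of $S$ correspond to multiplication by monomials $x_1^{a}x_2^{b}$, it suffices to establish $\phi(\mathbf{c}_S) = [S]$ in the canonical placement $O_2 = m_3 A$; this is the unique placement making the smallest upward equilateral triangle that contains $S$ have the origin as its lower-left vertex.

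Under this placement, the conditions (\ref{eqn:nmN}) are precisely what is needed for $S$ to be inscribed in the triangle $T = \llbracket O, NA, NB\rrbracket$, whose indicator function equals $\phi(z^N)$. Using the identities $N - m_2 = m_3 + n_1$ and $N - m_1 = m_3 + n_2$ that follow from (\ref{eqn:nmN}), a direct computation identifies the three corner triangles of $T$ that must be cut off as $m_3\cdot\triangle(OAB)$, $(m_3+n_1)A + m_2\cdot\triangle(OAB)$, and $(m_3+n_2)B + m_1\cdot\triangle(OAB)$, with $\phi$-preimages $z^{m_3}$, $x_1^{m_3+n_1}z^{m_2}$, and $x_2^{m_3+n_2}z^{m_1}$, respectively. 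Each such corner triangle shares with $S$ exactly one edge, namely the edge opposite the corner vertex of $T$; these shared edges are translates of $m_3\llbracket A,B\rrbracket$, $m_2\llbracket O,B\rrbracket$, and $m_1\llbracket O,A\rrbracket$, with preimages $y_3^{m_3}$, $x_1^{m_3+n_1}y_2^{m_2}$, and $x_2^{m_3+n_2}y_1^{m_1}$.

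The three half-open wedges obtained by removing the shared edge from each corner triangle sit inside pairwise disjoint open corners of $T$, and their union with $S$ reassembles all of $T$. This yields the indicator-function identity
\[
[S] \;=\; [T] \;-\; \sum_{\bullet\in\{\mathrm{ll},\mathrm{lr},\mathrm{top}\}}\bigl([T_{\bullet}] - [e_{\bullet}]\bigr),
\]
which, once each piece is rewritten in terms of the generators listed above, is exactly $\phi(\mathbf{c}_S)$. The only delicate point is matching the translation vectors $(m_3+n_1)A$ and $(m_3+n_2)B$ to the combinatorial data $n_1, n_2, m_1, m_2, m_3$ as dictated by the traversal $O_2\to A_1\to A_2\to\cdots$; degenerate hexagons in which some $m_i$ vanishes are absorbed uniformly by the same formula because then $z^{m_i} - y_i^{m_i} = 0$, and the corresponding corner triangle collapses to a point. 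Finally, multiplying by $x_1^{a}x_2^{b}$ for an appropriate $(a,b)\in\ints^2$ recovers an arbitrary lattice translate of $S$, giving the full identity $[S] = \phi(x_1^{a}x_2^{b}\mathbf{c}_S)$.
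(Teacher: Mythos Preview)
Your proposal is correct and follows essentially the same route as the paper. The paper does not give a standalone proof of this proposition; it simply records, just before the statement, that every $S\in\mathcal{A}$ is obtained by scaling $\triangle(OAB)$ up $N$-fold, capping off $m_3$-, $m_2$-, and $m_1$-fold copies of $\triangle(OAB)$ at the three vertices, and then translating in the hexagonal lattice, and then declares the formula to follow ``by definition of the homomorphism $\phi$.'' Your inclusion--exclusion via the half-open wedges $T_\bullet\setminus e_\bullet$ is exactly a careful execution of that sketch, and your handling of the degenerate cases ($m_i=0$ collapsing the corresponding term, and $n_i=0$ making adjacent wedges share only a vertex that is already removed with the edge $e_\bullet$) fills in details the paper leaves implicit.
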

\begin{example}
\label{exa:rhombus}
The monomial $y_1^2y_2^2$ corresponds to a closed rhombus with vertices
$O, 2A, 2B$ and $2(A+B)$. Here we have in $M^{\pm}({\cal{A}})$ that
\[
y_1^2y_2^2 = z^2 - x_1(z - y_2) - x_2(z - y_1),
\]
so here $N = 2$, $m_1 = m_2 = 1$, $m_3 = 0$, $n_1 = n_2 = 1$ and implicitly
$n_3 = 0$.
\end{example}
\begin{example}
\label{exa:z-1}
The Laurent polynomial
$x_1x_2z^{-1} + x_1y_2 + x_2y_1 + y_3 - x_1 - x_1 - x_1x_2$
corresponds to a closed upside-down triangle with vertices $A, B$ and $A+B$.
Here we have in $M^{\pm}({\cal{A}})$ that
\[
x_1x_2z^{-1} + x_2y_1 + x_1y_2 + y_3 - x_1 - x_2 - x_1x_2 =
z^2 - (z - y_3)  - x_1(z - y_2) - x_2(z - y_1),
\]
so here
$N = 2$, $m_1 = m_2 = m_3 = 1$, $n_1 = n_2 = 0$ and implicitly $n_3 = 0$. 
\end{example}
That the relations from Examples~\ref{exa:rhombus} and~\ref{exa:z-1} hold
in ${\comps}[x_1,x_2,y_1,y_2,y_3,z]^{\pm}/I$ will become apparent in
Theorem~\ref{thm:A}.
\begin{rmk*}
The form for $x_1^ax_2^b{\mathbf{c}}_S$ from Proposition~\ref{prp:norm-1}
is unique and indeed an if and only if statement:
the preimage of a set in ${\cal{A}}$
has the form $x_1^ax_2^b{\mathbf{c}}_S$ if and only if the set
is closed and convex. 
\end{rmk*}
The second normal form, akin to what was presented in
Observation~\ref{obs:dim-tiling}, is solely based on the fact
that each closed convex set formed by our Coxeter arrangement
of lines can be tiled by equilateral triangles in the case
when $d=2$. To facilitate expressions and notations,
we introduce new variables in
terms of those we use that will represent the relative interior of the
faces corresponding to $x_1,x_2,y_1,y_2,y_3$ and $z$ as follows:
\begin{eqnarray*}
  \mathring{y_1} & := & y_1 - 1 - x_1, \\
  \mathring{y_2} & := & y_2 - 1 - x_2, \\
  \mathring{y_3} & := & y_3 - x_1 - x_2, \\
  \mathring{z}   & := & z  - y_1 - y_2 - y_3 + 1 + x_1 + x_2.
\end{eqnarray*}
We note that $\mathring{x_i} = x_i$ and $\mathring{y_i} = -x_iy_i^{-1}$
for $i=1,2$ and $\mathring{y_3} = -x_1x_2y_3^{-1}$.
Since any closed convex set $S \in {\cal{A}}$ can be tiled disjointly
by translates of
$1, \mathring{y_1}, \mathring{y_2}, \mathring{y_3}, \mathring{z}$ and
$z^{-1}$, then we have the following.
\begin{proposition}
\label{prp:norm-2}
The preimage of any closed set which is the union of
closed convex sets in ${\cal{A}}$ in the homomorphism
$\phi$ is a unique disjoint union of translates of the relative
open sets $1, \mathring{y_1}, \mathring{y_2}, \mathring{y_3},
\mathring{z}$ and $z^{-1}$, or equivalently of the relative
open sets $1, y_1^{-1}, y_2^{-1}, y_3^{-1}, \mathring{z}$ and $z^{-1}$.
This will be called the {\em second normal form}
of a closed convex set in ${\comps}[x_1,x_2,y_1,y_2,y_3,z]^{\pm}/I$.
\end{proposition}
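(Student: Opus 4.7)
The plan is to establish both existence and uniqueness by working directly with the geometric tiling of $\reals^2$ induced by the Coxeter arrangement, and then to deduce the equivalence of the two listed families via the identities $\mathring{y_i}=-x_iy_i^{-1}$ for $i=1,2$ and $\mathring{y_3}=-x_1x_2y_3^{-1}$ noted just before the statement.

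First I would set up the tiling explicitly. The lines bounding the sets in $\cal A$ partition $\reals^2$ into a disjoint family of cells of six combinatorial types: the lattice points of ${\ints}(1,0)+{\ints}(\frac{1}{2},\frac{\sqrt{3}}{2})$, the open edges of each of the three slopes, the open upward-pointing unit equilateral triangles, and the open downward-pointing unit equilateral triangles. A routine verification, based on the defining relations of $\mathring{y_i}$ and $\mathring{z}$ together with inclusion-exclusion on the faces of the corresponding closed cells, identifies $1,\mathring{y_1},\mathring{y_2},\mathring{y_3},\mathring{z}$ as the indicator functions of a lattice point, an open edge of each of the three slope classes, and an open upward triangle, respectively. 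For the downward triangles I would invoke the general polytope identity $[P]^{-1}=(-1)^d[-\mathring P]$ with $d=2$ and $P=\llbracket O,A,B\rrbracket$, which exhibits $z^{-1}$ as the indicator function of the open triangle $-\mathring{\llbracket O,A,B\rrbracket}$, itself a downward-pointing open unit triangle; monomial translates $x_1^ax_2^b$ then recover every other downward open triangle.

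For existence, I would observe that any closed convex set $S\in\cal A$ has its vertices in the lattice and its sides along the grid lines, so its boundary lies in the $1$-skeleton of the tiling and $S$ is a disjoint union of cells. A finite union of such closed convex sets inherits the same property, and $[S]$ is then the sum of the indicator functions of the cells it contains, each of which, after an appropriate monomial translation $x_1^ax_2^b$, is one of the six basic expressions. For uniqueness, the cells of the tiling are pairwise disjoint as subsets of $\reals^2$, so their indicator functions are $\comps$-linearly independent in the function ring on $\reals^2$; since $\phi$ lands in this function ring, distinct sums of translates of our six generators have distinct images, forcing the representation to be unique. The equivalent representation using $1,y_1^{-1},y_2^{-1},y_3^{-1},\mathring{z},z^{-1}$ then follows by substituting $\mathring{y_i}=-x_iy_i^{-1}$ (respectively $\mathring{y_3}=-x_1x_2y_3^{-1}$) and absorbing the factor $-x_i$ (respectively $-x_1x_2$) into the translating monomial and sign.

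The main obstacle I anticipate is the bookkeeping: one must be careful that what the proposition describes as a ``disjoint union of translates'' really amounts, algebraically, to a sum of signed indicator functions of the open cells of the tiling, where under the second representation the expressions $y_i^{-1}$ and $z^{-1}$ carry the signs forced by $[P]^{-1}=(-1)^d[-\mathring P]$. Once this dictionary between cells of the triangulation and monomial-translated Laurent generators is pinned down, both existence and uniqueness reduce to the elementary fact that the cells form a disjoint tiling of $\reals^2$.
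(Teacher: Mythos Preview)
Your proposal is correct and follows the same idea the paper uses: the proposition is stated there without a formal proof, preceded only by the single sentence ``Since any closed convex set $S\in\cal A$ can be tiled disjointly by translates of $1,\mathring{y_1},\mathring{y_2},\mathring{y_3},\mathring{z}$ and $z^{-1}$, then we have the following.'' Your argument simply makes explicit what the paper leaves implicit---identifying the six cell types of the arrangement with the six generators, reading off existence from the tiling, and deducing uniqueness from disjointness of the cells---so there is no substantive difference in approach.
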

Going back to relations that can be used as reductions, we need
to specify more precisely what we mean by a {\em reduction} in
the ring ${\comps}[x_1,x_2,y_1,y_2,y_3,z]^{\pm}$. We mimic
what we said about the one-dimensional elements in $G_1$
from (\ref{eqn:dim-1}), namely, if the area of
$\triangle(OAB)$ is set to one unit, then we can put weight
on each monomial in ${\mathbf{m}}\in [x_1,x_2,y_1,y_2,y_3,z]^{\pm}$ by
letting it be the area of $\phi({\mathbf{m}})\in M^{\pm}({\cal{A}})$.
This area weight is not a partial monomial order, let alone term order.

{\sc Convention:}
In what follows, we will talk about the ``lattice points'' or just
``points'' for the terms $x_1^mx_2^n$ where $m, n \in \ints$,
``intervals'' for $y_1,y_2$ and $y_3$, their powers and translates
and ``triangles, trapezoids, parallelograms'' etc.~ for terms/monomials
that are mapped by $\phi$ to the mentioned polygons.

Looking first at the three $z^2$ relations we obtain by covering
the triangle $z^2$ of area $4$ by a small triangle
and a trapezoid, namely $z + y_3z$, $x_1z + y_2z$ and $x_2z + y_1z$
respectively, we obtain by the I/E principle
$[U\cup V] + [U\cap V] = [U] + [V]$ for sets $U,V\subseteq {\reals}^2$
the following relations:
\[
(z-1)(z-y_3) = (z-x_1)(z-y_2) = (z-x_2)(z-y_1) = 0.
\]
Looking secondly at the three other $z^2$ relations we obtain by covering 
the triangle $z^2$ by two smaller trapezoids, namely
$y_1z + y_2z$, $y_1z + y_3z$ and $y_2z + y_3z$ respectively, we obtain
similarly by the I/E principle the following relations:
\[
(z-y_1)(z-y_2) = (z-y_1)(z-y_3) = (z-y_2)(z-y_3) = 0.
\]
As in (\ref{eqn:dim-1}) we let $G_2$ be the set of the corresponding
elements just obtained from these two-dimensional coverings of
the triangle $z^2$:
\begin{equation}
\label{eqn:dim-2}
\begin{split}
  G_2 = \left\{\right. & \left. (z-1)(z-y_3),(z-x_1)(z-y_2),
  (z-x_2)(z-y_1),\right. \\
  & \left. (z-y_1)(z-y_2), (z-y_1)(z-y_3),
  (z-y_2)(z-y_3)\right.\left.\right\}.
\end{split}
\end{equation}
From the above we have $G_2\subseteq I$ and hence we have the
ideal containment $I' = (G_1\cup G_2)\subseteq I$.
This ideal $I'$ can now be used in our Minkowski ring
$M^{\pm}({\cal{A}})$ to reduce each monomial in
$[x_1,x_2,y_1,y_2,y_3,z]$ to a linear combination of
monomials of the form $x_1^ax_2^b\mathbf{m}$ where
$\mathbf{m}\in E \cup D \cup T$ where
\begin{eqnarray*}
  E & = & \{y_1, y_1, y_3\}, \\
  D & = & \{y_1y_2, y_1y_3,y_2y_3\}, \\
  T & = & \{y_1z, y_2z, y_3z\}.
\end{eqnarray*}
The images of the monomials in $E$ are the edges of the
triangle $\triangle(OAB)$, the images of the monomials of $D$
are the diamonds formed by the Minkowski sum of two of the edges
of $\triangle(OAB)$ and the monomials in $T$ are the trapezoids
formed by the Minkowski sum of the triangle $\triangle(OAB)$ itself
and each of its edges. We will refer to these monomials by their
geometric images.

Consider now all the possible coverings of trapezoids
$y_1z$, $y_2z$ and $y_3z$ of area $3$ from $T$
by two diamonds $y_1y_2$, $y_1y_3$
and $y_2y_3$ from $D$ on one hand, and by a diamond from $D$ and a triangle
$z$, $x_1z$ and $x_2z$ from $T$ on the other. In this
case we obtain no new relations than those induced by $I'$
and hence no new elements in $I$.

Finally, consider the coverings of the diamonds $y_1y_2$,
$y_1y_3$ and $y_2y_3$ of area $2$ from $D$ by two smaller triangles
$z$ and $z^* = \overline{z^{-1}} =
z^{-1} - y_1^{-1} - y_2^{-1} - y_3^{-1} + x_1^{-1} + x_2^{-1} + 1$,
corresponding to the topological closure of the preimage of $z^{-1}$.
Also in this case we do not obtain any new relations than those
induced by $I'$. That is to say, all these geometric cases of how
to cover all these shapes of area at most $3$ yield no new
relations than those already induced by $I' = (G_1\cup G_2)$.
Since all other overlapping coverings of shapes of area of $4$
or more follow from those already mentioned, we conclude that
the ideal $I$ from (\ref{eqn:isom}) is given by $I = (G_1\cup G_2)$
where $G_1$ and $G_2$ are as in (\ref{eqn:dim-1}) and (\ref{eqn:dim-2})
respectively. Hence, we have one of our main results in this section.
\begin{theorem}
\label{thm:A}
The Minkowski ring $M^{\pm}({\cal{A}})$ can be presented as
in (\ref{eqn:isom})
\[
M^{\pm}({\cal{A}})\cong {\comps}[x_1,x_2,y_1,y_2,y_3,z]^{\pm}/I
\]
where $I = (G_1\cup G_2)$ and $G_1$ and $G_2$ are as in
(\ref{eqn:dim-1}) and (\ref{eqn:dim-2}) respectively.
\end{theorem}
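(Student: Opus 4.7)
The plan is to set $I' := (G_1 \cup G_2)$ and prove the two containments $I' \subseteq \ker(\phi)$ and $\ker(\phi) \subseteq I'$ separately.

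For $I' \subseteq \ker(\phi)$, each of the nine generators encodes the inclusion-exclusion identity $[U]+[V] = [U\cup V] + [U\cap V]$ for a geometric decomposition of a shape in ${\cal{A}}$ into two overlapping sub-shapes whose intersection has lower dimension. The three $G_1$ relations come from one-dimensional coverings of doubled edges and are already covered by Corollary~\ref{cor:d-box-pm}, while the six $G_2$ relations correspond to the six ways of writing the doubled triangle $2\triangle(OAB) = \phi(z^2)$ as the union either of a unit triangle plus a trapezoid (three cases) or of two trapezoids meeting along a shared edge (three cases). Direct pointwise evaluation of the associated Laurent polynomial in ${\reals}^2$ then checks that each generator lies in $\ker(\phi)$.

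For the reverse containment, the plan is to establish a normal form modulo $I'$ for every element of $R = {\comps}[x_1,x_2,y_1,y_2,y_3,z]^{\pm}$. Treating the members of $G_1$ as rewrite rules $y_i^2 \mapsto (x_i+1)y_i - x_i$ (and the analogue for $y_3$), and the members of $G_2$ as rewrite rules for $z^2$, $z y_i$, and the three mixed products $y_i y_j$, one reduces every Laurent monomial in $R$ to a ${\comps}[x_1,x_2]^{\pm}$-linear combination of the ten shape-monomials in $\{1\} \cup E \cup D \cup T$. Uniqueness of this reduced form, which is what forces $\ker(\phi) \subseteq I'$, will be proved by rewriting each such $\comps$-linear combination in the \emph{second normal form} of Proposition~\ref{prp:norm-2}: after substituting $\mathring{y_i} = -x_i y_i^{-1}$ for $i=1,2$, $\mathring{y_3} = -x_1 x_2 y_3^{-1}$, and the definition of $\mathring{z}$, the ten shape-monomials translate into a formal $\comps$-linear combination of translates of the relatively open indicator functions $1, \mathring{y_1}, \mathring{y_2}, \mathring{y_3}, \mathring{z}, z^{-1}$ by the lattice $\ints\overrightarrow{OA} + \ints\overrightarrow{OB}$. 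These open pieces tile ${\reals}^2$ disjointly, so their indicators are pointwise linearly independent, and any element of $R/I'$ mapping to $0$ under $\phi$ must already vanish in $R/I'$.

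The main obstacle, and the place where genuine geometry enters, is verifying \emph{completeness} of $G_1 \cup G_2$: one must argue that every further decomposition relation on shapes built from $E \cup D \cup T$, in particular the coverings of each trapezoid $y_i z$ of area $3$ by two diamonds or by a diamond plus a triangle, and the coverings of each diamond $y_i y_j$ of area $2$ by two oppositely oriented triangles $z$ and $z^{*}$, is already a consequence of the nine listed generators. I would handle this by expanding each candidate covering relation in the second normal form and matching the coefficients of the open-piece indicators on both sides; pointwise linear independence then certifies that no new generator is needed. Once this is verified for the finitely many combinatorial types of overlap between shapes of area at most $3$, any overlap relation among shapes of larger area follows by iterated application of these building-block identities, completing the proof that $\ker(\phi) = I' = (G_1 \cup G_2)$.
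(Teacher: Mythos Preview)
Your outline follows the paper's argument: establish $(G_1\cup G_2)\subseteq\ker\phi$ via inclusion--exclusion on the six coverings of the doubled triangle $z^2$, reduce arbitrary monomials modulo $I':=(G_1\cup G_2)$ to combinations of small shapes, and then check that the remaining covering relations for trapezoids and diamonds of area at most $3$ already lie in $I'$.

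Two technical points need repair. First, your spanning set $\{1\}\cup E\cup D\cup T$ omits $z$ itself, and saying that $G_2$ furnishes rewrite rules for $zy_i$ and $y_iy_j$ while simultaneously keeping $D=\{y_iy_j\}$ and $T=\{y_iz\}$ in the normal form is self-contradictory. All six members of $G_2$ have leading term $z^2$, so they give one reduction for $z^2$ together with five syzygies among $\{z,\,zy_i,\,y_iy_j\}$; the actual ${\comps}[x_1,x_2]^{\pm}$-spanning set for $R/I'$ must be extracted from that more carefully. Second, and more seriously, the step you describe as ``translating the ten shape-monomials into second normal form'' has to be carried out \emph{modulo $I'$}, not merely in the Minkowski ring. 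For the disjointness argument to prove $\ker\phi\subseteq I'$ rather than the tautology $\ker\phi\subseteq\ker\phi$, each tiling identity such as $y_1y_2=\mathring z+x_1x_2z^{-1}+\cdots$ must be a formal consequence of the nine generators. The paper does exactly this computation for $z^n$ (the nontrivial case) in Claim~\ref{clm:Iprime} and Lemma~\ref{lmm:strip-tiling} by induction on $n$; you should invoke or reproduce that argument rather than assert the translation.
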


We will now use the elements in $I = (G_1 \cup G_2)$ to transform
the preimage of any convex set in in $M^{\pm}({\cal{A}})$ written
in the unique first normal form Proposition~\ref{prp:norm-1} to
the unique second from Proposition~\ref{prp:norm-2} corresponding
to the unique preimage of disjoint points, disjoint relatively open
unit edges and disjoint relatively open unit triangles that disjointly cover
the Euclidean plane ${\reals}^2$.

First note that by Observation~\ref{obs:dim-tiling} we have
that each closed interval can 
be tiled by a disjoint union of the relative open translates
of the sets $1, \mathring{y_1}, \mathring{y_2}, \mathring{y_3}$.
More specifically, (\ref{eqn:y-tiling})
holds for $y = y_i$ and $x = x_i$ for every $n\in\nats$ when $i=1,2$,
and for $y_3$ we have similarly the following.
\begin{equation}
\label{eqn:y3-tiling}
y_3^n = \sum_{i=0}^n x_1^ix_2^{n-i} + \mathring{y_3}\sum_{i=0}^{n-1}x_1^ix_2^{n-1-i}.
\end{equation}
These all follow from the relations presented by the elements of $G_1$ solely.
Therefore it suffices to show that each triangle $z^n$
can be reduced to, or tiled by, the unique second normal form. 

Consider now a triangle $z^n$ of area $n^2$ in
$M^{\pm}({\cal{A}})\cong {\comps}[x_1,x_2,y_1,y_2,y_3,z]^{\pm}/I$.
For each $n\geq 0$ let
\[
f_n := \sum_{i=1}^{n+1}\frac{x_1^i - x_2^i}{x_1-x_2}\in
{\comps}[x_1,x_2,y_1,y_2,y_3,z]^{\pm}/I.
\]
A second normal form or tiling of $z^n$ in terms of translates of
$1, \mathring{y_1}, \mathring{y_2}, \mathring{y_3}, \mathring{z}$ and $z^{-1}$,
as stated in Proposition~\ref{prp:norm-2}, is given by the following claim.
\begin{claim}
\label{clm:Iprime}  
For each $n\in\nats$ we have 
\[
z^n \equiv f_n
+ f_{n-1}(\mathring{y_1} + \mathring{y_2} + \mathring{y_3} + \mathring{z})
+ f_{n-2}x_1x_2z^{-1} \pmod{I}
\]
where $\equiv$ means equality modulo the ideal $I = (G_1\cup G_2)$.
\end{claim}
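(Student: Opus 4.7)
The plan is to identify the right-hand side with the unique second normal form (in the sense of Proposition~\ref{prp:norm-2}) of the closed upright triangle $z^n$, and then invoke that uniqueness to conclude the congruence modulo $I$. Geometrically, the four summands account for the four dimensional strata (lattice vertices, relatively open unit edges of three orientations, relatively open upward unit triangles, relatively open downward unit triangles) of the standard triangulation of the scaled equilateral triangle $n\cdot\llbracket O,A,B\rrbracket$ whose indicator function is $\phi(z^n)$.

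First I would expand $f_n$. Writing $\frac{x_1^i - x_2^i}{x_1 - x_2} = \sum_{k=0}^{i-1} x_1^k x_2^{i-1-k}$ yields
\[
  f_n \;=\; \sum_{\substack{a,b\ge 0\\ a+b\le n}} x_1^a x_2^b,
\]
so $\phi(f_n)$ is the sum of indicator functions of the $\binom{n+2}{2}$ lattice points $aA+bB$ inside $n\cdot\llbracket O,A,B\rrbracket$. An entirely analogous expansion shows that $\phi(f_{n-1}\mathring{y_k})$ enumerates the $\binom{n+1}{2}$ open unit edges inside the triangle parallel to $OA$, $OB$, and $AB$ (for $k=1,2,3$), and that $\phi(f_{n-1}\mathring{z})$ enumerates the $\binom{n+1}{2}$ upward open unit triangles.

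The downward-triangle term requires more care. Since $z$ represents the $2$-dimensional polytope $\llbracket O,A,B\rrbracket$, the inversion formula $[P]^{-1} = (-1)^d[-\mathring{P}]$ recalled in Section~\ref{sec:simple} shows that $\phi(z^{-1})$ is the indicator of the relatively open downward triangle with vertices $0,-A,-B$. Translating by $x_1^{i+1} x_2^{j+1}$ moves this cell to the downward unit triangle with vertices $(i+1)A+(j+1)B$, $iA+(j+1)B$, and $(i+1)A+jB$, which lies inside $n\cdot\llbracket O,A,B\rrbracket$ precisely when $i+j\le n-2$. Multiplying $x_1 x_2 z^{-1}$ by $f_{n-2}$ therefore ranges over exactly the $\binom{n}{2}$ downward relatively open unit cells of $z^n$.

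Finally I would verify that these cells are pairwise disjoint and that, together with their boundaries (which are themselves among the $0$- and $1$-dimensional cells already accounted for), they cover $n\cdot\llbracket O,A,B\rrbracket$; this is the standard triangulation of an equilateral triangle of side $n$ into $n^2$ unit equilateral triangles, and needs only elementary geometry together with a check that the counts $\binom{n+2}{2}$, $3\binom{n+1}{2}$, $\binom{n+1}{2}$, $\binom{n}{2}$ agree with the numbers of cells of each type. The uniqueness part of Proposition~\ref{prp:norm-2} then forces equality in $M^{\pm}({\cal A})\cong\comps[x_1,x_2,y_1,y_2,y_3,z]^{\pm}/I$. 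The main subtlety is keeping track of the shift $x_1 x_2$ in the downward-triangle summand: because $\phi(z^{-1})$ is an open downward triangle whose horizontal edge lies on the $x$-axis rather than a single vertex at the origin, the natural indexing of the $\binom{n}{2}$ downward cells by their upper-right vertices forces that prefactor, and getting this alignment right is the one place where a routine verification could easily slip.
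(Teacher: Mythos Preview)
Your approach is correct but genuinely different from the paper's. The paper proceeds by algebraic induction: the base case $n=1$ is immediate from the definitions of $\mathring{y_i}$ and $\mathring{z}$, and the inductive step is isolated as Lemma~\ref{lmm:strip-tiling}, which computes $z^n-z^{n-1}$ by first using $(z-1)(z-y_3)\in I$ to rewrite it as $y_3^{n-1}(z-1)$ and then performing explicit manipulations with the generators of $G_1\cup G_2$. In contrast, you verify the identity \emph{semantically}: you show that the image under $\phi$ of the right-hand side is the standard tiling of the triangle $n\cdot\llbracket O,A,B\rrbracket$ into open cells, hence equals $\phi(z^n)$ as a function on $\reals^2$, and then appeal to the isomorphism $M^{\pm}(\cal A)\cong\comps[x_1,x_2,y_1,y_2,y_3,z]^{\pm}/I$ of Theorem~\ref{thm:A} to conclude the congruence. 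Your route is shorter and more conceptual; the paper's route is constructive in that it exhibits the derivation explicitly from the nine generators, which is closer to the stated aim of the passage (``use the elements in $I=(G_1\cup G_2)$ to transform\ldots''). One small remark: you do not actually need the \emph{uniqueness} clause of Proposition~\ref{prp:norm-2}; what you need is simply that the disjoint open cells you list sum to the indicator of the closed triangle, i.e.\ equality under $\phi$, after which Theorem~\ref{thm:A} does the rest.
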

To prove Claim~\ref{clm:Iprime} we first note that for $n=1$
Claim~\ref{clm:Iprime} becomes $z \equiv (1 + x_1 + x_2) +
(\mathring{y_1} + \mathring{y_2} + \mathring{y_3} + \mathring{z})$ which
trivially holds by mere definitions of the relative open variables
$\mathring{*}$ (and without any use of the relations induced by $I$.)
We proceed by induction on $n$ and assume that Claim~\ref{clm:Iprime}
holds for each integer from $\{1,\ldots,n-1\}$. In that
case it suffices to prove the following.
\begin{lemma}
\label{lmm:strip-tiling}
For any $n\in\nats$ we have the following relation
\begin{equation}
\label{eqn:strip}  
\begin{split}
&  z^n - z^{n-1} \\
&  \equiv \frac{x_1^{n+1} - x_2^{n+1}}{x_1-x_2} +
\left(\frac{x_1^{n} - x_2^{n}}{x_1-x_2}\right)
(\mathring{y_1} + \mathring{y_2} + \mathring{y_3} + \mathring{z})
+ \left(\frac{x_1^{n-1} - x_2^{n-1}}{x_1-x_2}\right)(x_1x_2z^{-1})
\pmod{I}.
\end{split}
\end{equation}
\end{lemma}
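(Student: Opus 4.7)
The plan is to proceed by induction on $n$, using as the engine the recursion
\[
S_n \;\equiv\; y_3\, S_{n-1} \pmod{I}, \qquad S_n := z^n - z^{n-1},
\]
which follows from $(z-1)(z-y_3)\in G_2\subseteq I$: expanding gives $z^2 - z \equiv y_3(z-1)\pmod{I}$, and multiplying through by $z^{n-2}$ (or by $z^{-1}$ when $n=1$) produces the stated recursion for all $n\geq 1$. The base case $n=1$ of the lemma is verified by direct computation: writing $M := \mathring{y_1} + \mathring{y_2} + \mathring{y_3} + \mathring{z}$, the RHS of (\ref{eqn:strip}) reduces to $(x_1+x_2) + M$, and the definitions of the open variables alone (no use of $I$) give $M = z - 1 - x_1 - x_2$, so the RHS equals $z - 1 = S_1$.

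For the inductive step, set $T_k := [k+1] + [k]\,M + [k-1]\,x_1 x_2 z^{-1}$ with $[k] := (x_1^k - x_2^k)/(x_1-x_2)$; the lemma claims $S_k \equiv T_k$. Assuming this for indices $n-1$ and $n-2$, write $S_n \equiv y_3 S_{n-1} \equiv y_3 T_{n-1}$. The key algebraic identity is
\[
y_3\,[k] \;=\; [k+1] + x_1 x_2\,[k-1] + \mathring{y_3}\,[k],
\]
which combines the definition $y_3 = x_1 + x_2 + \mathring{y_3}$ with the purely polynomial identity $(x_1+x_2)[k] = [k+1] + x_1 x_2\,[k-1]$ in $\comps[x_1,x_2]$. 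Applying this to each of $y_3[n]$, $y_3[n-1]$, and $y_3[n-2]$ and regrouping the resulting nine terms by the factors $1$, $M$, and $x_1x_2 z^{-1}$ yields
\[
y_3\,T_{n-1} \;=\; T_n + x_1 x_2\, T_{n-2} + \mathring{y_3}\, T_{n-1}.
\]

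It remains to show the correction $x_1 x_2\, T_{n-2} + \mathring{y_3}\, T_{n-1}$ vanishes modulo $I$. By the inductive hypothesis it is congruent to $x_1 x_2\, S_{n-2} + \mathring{y_3}\, S_{n-1}$. The second key ingredient is $(y_3 - x_1)(y_3 - x_2) \in G_1 \subseteq I$, which upon expansion reads $y_3\,\mathring{y_3} \equiv -x_1 x_2 \pmod{I}$; combined with the recursion applied one step down, $S_{n-1}\equiv y_3\, S_{n-2}$, one obtains $\mathring{y_3}\, S_{n-1} \equiv (\mathring{y_3}\, y_3)\, S_{n-2} \equiv -x_1 x_2\, S_{n-2}$, which cancels the remaining correction term. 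The main obstacle I anticipate is purely the bookkeeping of the regrouping step, where the nine terms from expanding $y_3\, T_{n-1}$ must be sorted into the desired $T_n$ plus exactly the two correction terms; once this is in hand, the cancellation powered by the identity $y_3\,\mathring{y_3}\equiv -x_1 x_2$ from $G_1$ and the recursion from $G_2$ is a clean finish.
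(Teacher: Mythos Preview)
Your proof is correct and takes a genuinely different route from the paper's. One small bookkeeping point: since your inductive step invokes both $T_{n-1}\equiv S_{n-1}$ and $T_{n-2}\equiv S_{n-2}$, you need two initial cases. You verify $n=1$; you should also note that $S_0=1-z^{-1}=T_0$ holds identically (with $[-1]=-1/(x_1x_2)$), which anchors the $n=2$ step. This is immediate.

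As for the comparison: the paper first rewrites $z^n-z^{n-1}\equiv y_3^{\,n-1}(z-1)$ and then telescopes via an $x_1$-shift, subtracting $x_1$ times the $(n{-}1)$-case from the $n$-case to reduce everything to a single ``core'' identity $(y_3-x_1)(z-1)\equiv x_2(z-1-x_1+x_1z^{-1})$, which is then derived from $(z-1)(z-y_3)\in I$ together with the consequence $(z-1)(z-x_1)(z-x_2)\in I$. Your argument instead keeps the symmetry in $x_1,x_2$ throughout: the recursion $S_n\equiv y_3S_{n-1}$ (from $(z-1)(z-y_3)\in G_2$) and the clean algebraic identity $y_3\,T_{n-1}=T_n+x_1x_2\,T_{n-2}+\mathring{y_3}\,T_{n-1}$ reduce the problem to the single relation $y_3\,\mathring{y_3}\equiv -x_1x_2$ coming from $(y_3-x_1)(y_3-x_2)\in G_1$. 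Your route uses only two of the listed generators of $I$ directly and avoids the derived product $(z-1)(z-x_1)(z-x_2)$; it is arguably the more transparent of the two, at the cost of requiring a two-term induction.
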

\begin{proof}
Since $(z - 1)(z - y_3)\in I$ we have, either geometrically or
directly by induction on $n$, that 
$z^{n} - z^{n-1} \equiv y_3^{n-1}z - y_3^{n-1} \pmod{I}$ and therefore  
(\ref{eqn:strip}) is equivalent to
\begin{equation}
\label{eqn:strip-2}  
\begin{split}
& y_3^{n-1}z - y_3^{n-1} \\
& \equiv \frac{x_1^{n+1} - x_2^{n+1}}{x_1-x_2} +
\left(\frac{x_1^{n} - x_2^{n}}{x_1-x_2}\right)
(\mathring{y_1} + \mathring{y_2} + \mathring{y_3} + \mathring{z})
+ \left(\frac{x_1^{n-1} - x_2^{n-1}}{x_1-x_2}\right)(x_1x_2z^{-1})
\pmod{I}.
\end{split}
\end{equation}
Note that (\ref{eqn:strip-2}) clearly holds for $n=1$ so proceeding
by induction we assume (\ref{eqn:strip-2}) holds for fixed $n\geq 1$.
For the inductive implication from $n$ to $n+1$ it suffices to show that
\begin{equation}
\label{eqn:strip-ed}  
y_3^{n}z - y_3^{n} - x_1(y_3^{n-1}z - y_3^{n-1})
\equiv
x_2^{n+1} + x_2^n(z - 1 - x_1 - x_2) + x_1x_2^nz^{-1} \pmod{I}
\end{equation}
holds for every $n\geq 1$.

Since $(y_3^{n-1} - x_2^{n-1})(y_3 - x_1)(z - 1) \equiv 0 \pmod{I}$
we obtain that left hand side of (\ref{eqn:strip-ed}) satisfies
\[
y_3^{n}z - y_3^{n} - x_1(y_3^{n-1}z - y_3^{n-1}) =
y_3^{n-1}(y_3 - x_1)(z - 1) \equiv
x_2^{n-1}(y_3 - x_1)(z - 1) \pmod{I}
\]
and hence, to prove (\ref{eqn:strip-ed}), it suffices to prove
\[
x_2^{n-1}(y_3 - x_1)(z - 1) \equiv
x_2^{n+1} + x_2^n(z - 1 - x_1 - x_2) + x_1x_2^nz^{-1} \pmod{I},
\]
or, by factoring out $x_2^{n-1}$, it suffices to prove
\begin{equation}
\label{eqn:core}
(y_3 - x_1)(z-1) \equiv x_2(z - 1 - x_1 + x_1z^{-1}) \pmod{I}.
\end{equation}
Since $(z-1)(z-x_1)(z-x_2)\in I$ we obtain that
$(1 - z^{-1})(z-x_1)(z-x_2)\in I$. By first moving the expression on the
left hand side of (\ref{eqn:core}) to its right hand side, then subtracting
$(1 - z^{-1})(z-x_1)(z-x_2)$ we obtain that (\ref{eqn:core})
is equivalent to $(z-1)(z-y_3)\in I$ which clearly holds thereby
completing the proof of the lemma.
\end{proof}

We conclude this subsection by briefly showing that each of the generators
for the ideal $I = (G_1\cup G_2)$ from the above Theorem~\ref{thm:A}
is indeed needed. That is to say,
we cannot obtain the same ideal $I$ by a strict subset of generators
from $G_1\cup G_2$.

Firstly, by letting $y_1 = 2$ and all the other variables of
${\comps}[x_1,x_2,y_1,y_2,y_3,z]^{\pm}$ be $1$, then we get that
the first generator $(y_1 - 1)(y_1 - x_1)$ from $G_1$ equals $1$ and
all the other generators from $G_1\cup G_2$ equal zero. Hence,
$(y_1 - 1)(y_1 - x_1)$ is not in the ideal generated by the other
elements from $G_1\cup G_2$. Similarly, by letting
(i) $y_2 = 2$ and all the other variables be $1$ and 
(ii) $y_3 = 2$ and all the other variables be $1$, we get that
neither of the two remaining generators of $G_1$ can be omitted.

Secondly, by the same token, by letting
(i) $y_1 = y_2 = 1$ and all the remaining variables be $0$,
(ii) $y_1 = y_3 = 1$ and all the remaining variables be $0$ and 
(iii) $y_2 = y_3 = 1$ and all the remaining variables be $0$, we get that
none of the last three generators of $G_2$ can be omitted to generate
the ideal $I = (G_1\cup G_2)$.

Thirdly, and similarly, we consider the following:
(i) By letting $z = y_1 = y_2 = x_1 = x_2 = 2$ we get that
$(z - 1)(z - y_3) = 2 - y_3$, $(y_3 - x_1)(y_3 - x_2) = (2 - y_3)^2$ and
all the other generators from $G_1\cup G_2$ evaluate to zero.
If the first generator $(z - 1)(z - y_3)$ is in the ideal generated by
the other generators from $G_1\cup G_2$, then $2 - y_3$ would be in the
ideal generated by $(2 - y_3)^2$ which is impossible. Hence, the generator
$(z - 1)(z - y_3)$ cannot be omitted from $G_1\cup G_2$.
(ii) By letting $z = y_1 = y_3 = x_2 = 1$ we get that
$(z - x_1)(z - y_2) = (x_1 - 1)(y_2 - 1)$,
$(y_2 - 1)(y_2 - x_2) = (y_2 - 1)^2$ and
all the other generators from $G_1\cup G_2$ evaluate to zero.
As in (i), if the second generator $(z - x_1)(z - y_2)$ is in the
ideal generated by the other generators from $G_1\cup G_2$,
then $(x_1 - 1)(y_2 - 1)$ would be in the
ideal generated by $(y_2 - 1)^2$ which is impossible.
Hence, the generator $(z - x_1)(z - y_2)$ cannot be omitted
from $G_1\cup G_2$ either.
(iii) By symmetry, by letting $z = y_2 = y_3 = x_1 = 1$ we get that
$(z - x_2)(z - y_1) = (x_2 - 1)(y_1 - 1)$,
$(y_1 - 1)(y_1 - x_1) = (y_1 - 1)^2$. Again,
if the third generator $(z - x_2)(z - y_1)$ is in the
ideal generated by the other generators from $G_1\cup G_2$,
then $(x_2 - 1)(y_1 - 1)$ would be in the
ideal generated by $(y_1 - 1)^2$ which is impossible.
Hence, the generator $(z - x_2)(z - y_1)$ cannot be omitted
from $G_1\cup G_2$ either.

We summarize in the following.
\begin{observation}
\label{obs:G1-G2-needed}
The ideal $I = (G_1\cup G_2)$ from Theorem~\ref{thm:A} is minimally
generated by the sets $G_1$ and $G_2$ from
(\ref{eqn:dim-1}) and (\ref{eqn:dim-2}) respectively; no proper subset
of generators in $G_1\cup G_2$ generates $I$.
\end{observation}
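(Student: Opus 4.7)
The plan is to prove minimality generator by generator. For each $g \in G_1 \cup G_2$, I will construct a ${\comps}$-algebra homomorphism $\psi : {\comps}[x_1,x_2,y_1,y_2,y_3,z]^{\pm} \to A$ into a commutative ${\comps}$-algebra $A$ for which $\psi(g)$ does not lie in the ideal of $A$ generated by $\{\psi(g') : g' \in (G_1 \cup G_2)\setminus\{g\}\}$. Applying $\psi$ to any purported identity $g = \sum h_{g'} g'$ would then give a contradiction, so $g$ cannot be removed.

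For each of the three edge-relation generators in $G_1$, an evaluation into $\comps$ itself suffices. For $(y_1-1)(y_1-x_1)$, send $y_1 \mapsto 2$ and every other variable to $1$; every other generator of $G_1 \cup G_2$ carries a factor among $(y_2-1),(y_2-x_2),(y_3-x_1),(y_3-x_2),(z-1),(z-x_1),(z-y_2)$ that evaluates to $0$, while the chosen generator maps to $1$. Symmetric substitutions playing the roles of $y_2$ and $y_3$ handle the other two edge-relation generators. For the three diamond generators of $G_2$, one keeps a parameter: for $(z-y_1)(z-y_2)$, the assignment $y_1, y_2 \mapsto 1$ and $x_1, x_2, y_3, z \mapsto t$ into $A = {\comps}[t,t^{-1}]$ annihilates every other generator via a factor that reduces to $0$ or to $t-t$, and sends $(z-y_1)(z-y_2)$ to $(t-1)^2 \neq 0$. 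The case $(z-y_1)(z-y_3)$ can be handled by $z, y_2, x_2 \mapsto 1$ and $y_1, y_3, x_1 \mapsto t$, and $(z-y_2)(z-y_3)$ by the mirror substitution.

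The delicate case is the three trapezoid generators $(z-1)(z-y_3)$, $(z-x_1)(z-y_2)$, $(z-x_2)(z-y_1)$ of $G_2$, because no evaluation into $\comps$ can simultaneously kill the paired edge relation in $G_1$ and keep the trapezoid generator alive. For $(z-1)(z-y_3)$, I would send $z, y_1, y_2, x_1, x_2 \mapsto 2$ and leave $y_3$ free, mapping into $A = {\comps}[y_3, y_3^{-1}]$; a direct check shows only $(z-1)(z-y_3) \mapsto 2 - y_3$ and $(y_3-x_1)(y_3-x_2) \mapsto (y_3-2)^2$ are nonzero. The main obstacle, and really the only content of the argument, is the non-membership $(2 - y_3) \notin ((y_3-2)^2)$ in ${\comps}[y_3, y_3^{-1}]$; this holds because $y_3-2$ is irreducible and a non-unit in this PID, so any multiple of $(y_3-2)^2$ vanishes to order at least $2$ at $y_3 = 2$. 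Analogous evaluations (for instance $z, y_1, y_3, x_2 \mapsto 1$ with $x_1, y_2$ free for $(z-x_1)(z-y_2)$, paired against the surviving image $(y_2-1)^2$ of $(y_2-1)(y_2-x_2)$ and using $(1-x_1)(1-y_2) \notin ((y_2-1)^2)$) will dispatch the remaining two trapezoid generators.
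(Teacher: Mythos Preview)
Your proof is correct and follows the same generator-by-generator specialization strategy as the paper: for each $g\in G_1\cup G_2$ one exhibits a ${\comps}$-algebra map out of the Laurent ring under which the other eight generators die (or reduce to a single element that visibly cannot contain the image of $g$), exactly as the paper does, with the same evaluations for the $G_1$ generators and the three trapezoid generators of $G_2$. The only difference is cosmetic: for the three diamond generators $(z-y_i)(z-y_j)$ you specialize into ${\comps}[t,t^{-1}]$, whereas the paper sets several variables equal to $0$; your choice is arguably cleaner, since it gives an honest ${\comps}$-algebra homomorphism from the Laurent ring without having to clear denominators.
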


\subsection{Minkowski identities}
Note that each element in $G_2$ has the form
$\prod_{F\in {\cal{P}}(P)}([P] - [F])$ where ${\cal{P}}(P)$ is a
set of faces of $P$ that cover the vertices of $P$, that is,
for every vertex $v$ of $P$ there is an $F\in {\cal{P}}(P)$
with $v\in F$. This holds in general too as the following proposition
states, and it follows from a slightly more general
statement~\cite[Corollary~2.5]{Jay-Klaus}.
\begin{proposition}
\label{prp:cover}
Let $P$ be a convex polytope in $W = {\reals}^d$
with (a finite) vertex set $V$.
Then for any set ${\cal{P}}(P)$ of faces of $P$ that cover $V$, so
$V = \bigcup_{F\in {\cal{P}}(P)}F$, we have
$\prod_{F\in {\cal{P}}(P)}([P] - [F]) = 0$ in the Minkowski ring
$M({\cal{P}}(W))$.
\end{proposition}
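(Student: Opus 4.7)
The plan is to verify the identity pointwise on $W$. Fix $x \in W$ and set $N := |{\cal{P}}(P)|$. Expanding the product by the distributive law and using that multiplication in the Minkowski ring corresponds to Minkowski addition of polytopes, one obtains
\[
\prod_{F \in {\cal{P}}(P)}\bigl([P]-[F]\bigr)(x) \;=\; \sum_{S \subseteq {\cal{P}}(P)}(-1)^{|S|}[Q_S](x),
\]
where $Q_S := \sum_{F \in S}F + (N-|S|)P$. Each $Q_S$ is a subset of $NP$, and $Q_{S'} \subseteq Q_S$ whenever $S \subseteq S'$ (since $F\subseteq P$ for every $F$). Hence the sum already vanishes identically for $x \notin NP$, and I would restrict attention to $x \in NP$.

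For such $x$, I would introduce the decomposition polytope $D(x) := \{(y_F)_{F \in {\cal{P}}(P)} \in P^{{\cal{P}}(P)} : \sum_F y_F = x\}$ and its convex subpolytopes $D_F := \{(y_G) \in D(x) : y_F \in F\}$. A direct unwinding of the Minkowski sums shows that $x \in Q_S$ if and only if $\bigcap_{F \in S}D_F \neq \emptyset$, so $\Delta_x := \{S \subseteq {\cal{P}}(P) : \bigcap_{F \in S}D_F \neq \emptyset\}$ is a simplicial complex (downward closed by the nesting of the $Q_S$), and a standard bookkeeping gives $\sum_{S}(-1)^{|S|}[Q_S](x) = 1 - \chi(\Delta_x \setminus \{\emptyset\})$. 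Because every nonempty intersection $\bigcap_{F \in S}D_F$ is a nonempty convex polytope and hence contractible, the cover $\{D_F\}$ is a good cover of $\bigcup_F D_F$, and the nerve theorem identifies $|\Delta_x \setminus \{\emptyset\}|$ up to homotopy with $\bigcup_F D_F$. Therefore proving the proposition reduces to showing that $\bigcup_F D_F$ is nonempty and has Euler characteristic $1$ for every $x \in NP$.

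The main obstacle, and the place where the vertex-covering hypothesis enters essentially, is exactly this final reduction. Nonemptiness of $\bigcup_F D_F$ I expect to follow from a Carath\'eodory-type argument: any $x \in NP$ admits a decomposition $x = \sum_F y_F$ in which at least one $y_F$ equals a vertex $v$ of $P$, and by hypothesis $v$ lies in some $F' \in {\cal{P}}(P)$, so after a permutation of the indexing one can arrange $y_{F'} = v \in F'$. Establishing contractibility (or at least $\chi = 1$) of $\bigcup_F D_F$ is the harder combinatorial step, which I would attack by constructing an explicit deformation retract using the convex structure of $D(x)$ together with a system of distinct vertex representatives supplied by the covering ${\cal{P}}(P) \supseteq V$. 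In the special case ${\cal{P}}(P) = V$ this reduces to the classical identity $\prod_{v \in V}([P]-[v]) = 0$ from~\cite{Lawrence}, while the full general version is exactly the content of the cited~\cite[Corollary~2.5]{Jay-Klaus}, whose proof essentially carries out this Euler-characteristic computation.
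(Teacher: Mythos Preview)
Your overall strategy---pointwise evaluation, rewriting the alternating sum as $1-\chi(\text{nerve})$, and invoking the nerve theorem for the convex cover $\{D_F\}$---is a legitimate route. Note, though, that the paper itself supplies no proof of this proposition beyond the citation to~\cite[Cor.~2.5]{Jay-Klaus} (with the remark that~\cite[Thm.~3]{Lawrence} also suffices), so there is no in-paper argument to compare against; what matters is whether your sketch stands on its own.

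It does not, because the two steps you flag as ``remaining'' are the whole content and your arguments for them are flawed. The Carath\'eodory-type claim---that every $x\in NP$ decomposes as $\sum_F y_F$ with some $y_F$ a vertex of $P$---is false. Take $P$ the triangle on $(0,0),(1,0),(0,1)$, ${\cal P}(P)=\{e,v\}$ with $e=\llbracket(0,0),(1,0)\rrbracket$ and $v=(0,1)$ (so $N=2$), and $x=(\tfrac23,\tfrac23)\in 2P$: one checks $x-w\notin P$ for every vertex $w$, so no two-term decomposition in $P$ has a vertex component. (Nonemptiness of $\bigcup_F D_F$ does hold here---e.g.\ $((\tfrac23,0),(0,\tfrac23))\in D_e$---but not via your argument.) The contractibility step is only gestured at, and the ``system of distinct vertex representatives'' heuristic cannot work in general: ${\cal P}(P)$ may well have more elements than $V$ (e.g.\ ${\cal P}(P)=\{e_{12},v_1,v_2,v_3\}$ on a triangle), so no SDR exists. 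Since your last sentence then defers to the very reference the paper invokes, the proposal as written does not furnish an independent proof.
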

\begin{rmk*}
A special case of the above Proposition~\ref{prp:cover}, where
${\cal{C}}(P)$ consists of all the vertices of the polytope $P$,
appears as Theorem 5 in~\cite[p.~12]{Lawrence} and its proof
depends on Theorem 3 from~\cite[p.~9]{Lawrence}. This Theorem 3, however,
can also be used to prove the above Proposition~\ref{prp:cover}
directly.
\end{rmk*}  
Working in ${\comps}[x_1,x_2,y_1,y_2,y_3,z]^{\pm}/I$ we obtain by
direct manipulation
\begin{eqnarray*}
  (z-1)(z-x_1)(z-x_2) & = & (z-y_1)(z-x_1)(z-x_2) + (y_1-1)(z-x_1)(z-x_2) \\
  & = & (y_1-1)(z-x_1)(z-x_2) \\ 
  & = & (y_1-1)(z-y_1)(z-x_2) + (y_1-1)(y_1-x_1)(z-x_2) \\
  & = & (y_1-1)[(z-y_1)(z-x_2)] + [(y_1-1)(y_1-x_1)](z-x_2) \\
  & = & 0,
\end{eqnarray*}
and hence the known identity $\prod_{v\in P}([P] - [v]) = 0$ from
from~\cite[Theorem 5, p.~12]{Lawrence} for the triangle $z$ does not constitute
a new relation from those induced by $I = (G_1\cup G_2)$.
This observation can be generalized but first we need a definition.
\begin{definition}
\label{def:MCFP}
For a $d$-polytope $P$ and $\ell\leq d$ let ${\cal{F}}_{\ell}(P)$
denote all the closed
$\ell$-faces of $P$ and let ${\cal{F}}(P)$ denote the set of all proper
faces of $P$, so ${\cal{F}}(P) = \bigcup_{\ell = 0}^d{\cal{F}}_{\ell}(P)$.
The \emph{Minkowski face ring of $P$} is the
Minkowski subring $M({\cal{F}(P)})\subseteq M({\cal{P}}(W))$ generated by
the finite subset ${\cal{F}}(P)$ of closed convex sets of $W = {\reals}^d$.

Likewise, if the indicator function of each face has a multiplicative inverse
then we denote the Minkowski face ring by
$M^{\pm}({\cal{F}}(P))$.
\end{definition}
To best describe this generalization we need some additional terminologies.

For our polytope $P$ let $\tilde{\cal{C}}_{\ell}(P)$ denote the collection of
all sets of faces that cover all the $\ell$-faces of $P$, that is to say
\[
\tilde{\cal{C}}_{\ell}(P) = \{\cal{C}\subseteq \cal{F}(P) :
X\in {\cal{F}}_{\ell}(P)\Rightarrow X\subseteq Y\mbox{ for some }
Y\in{\cal{C}}\}.
\]
For $\cal{C}\in\tilde{\cal{C}}_0(P)$ we have by
Proposition~\ref{prp:cover} that the identity
\begin{equation}
\label{eqn:ID}
\id(P;\cal{C}) : \prod_{F\in{\cal{C}}}([P] - [F]) = 0
\end{equation}
holds in the Minkowski ring $M({\cal{P}}(W))$. We now note a few properties:
(i) if $\cal{C}\subseteq\cal{C}'$ for two elements in $\tilde{\cal{C}}_0(P)$,
then clearly the identity $\id(P;\cal{C})$ implies the identity
$\id(P;\cal{C}')$.
(ii) We can always assume $\cal{C}\in\tilde{\cal{C}}_0(P)$ to be an
antichain w.r.t.~inclusion, since if $A,B\in\cal{C}$ and $A\subseteq B$,
then $\cal{C}\setminus\{A\}\in \tilde{\cal{C}}_0(P)$ and the identify
$\id(P;\cal{C}\setminus\{A\})$ holds in $M({\cal{P}}(W))$ and it implies
the identity $\id(P;\cal{C})$. This means that in our search for defining
identities $\id(P;\cal{C})$ among all possible
$\cal{C}\in\tilde{\cal{C}}_0(P)$, i.e.~those that imply other identities of this
same type (or rather those elements that generate the ideal when
presenting the Minkowski ring as a quotient of a polynomial ring and
an ideal), we can restrict our search to those identities $\id(P;\cal{C})$
where $\cal{C}$ is minimal w.r.t.~inclusion. By (ii) these minimal
sets are then necessarily antichains. Let
$\tilde{\cal{A}}_0(P)\subseteq \tilde{\cal{C}}_0(P)$ denote the set of these
antichains.

Suppose $\cal{A},\cal{A}'\in \tilde{\cal{A}}_0(P)$. We say that
$\cal{A}'$ {\em covers} $\cal{A}$ if there is a face $A\in\cal{A}$ of $P$
and a set $\cal{B}\in \tilde{\cal{C}}_0(A)$ of faces of $A$ that covers
all the vertices of $A$ such that
${\cal{A}}' = (\cal{A}\setminus\{A\})\cup\cal{B}$.
In this case we have the following
\begin{proposition}
\label{prp:cover-id}
If $P$ is a $d$-polytope, $\cal{A},\cal{A}'\in \tilde{\cal{A}}_0(P)$ are
antichains of faces of $P$ such that $\cal{A}'$ covers $\cal{A}$, then
$\id(P;\cal{A})$ implies the identity $\id(P;\cal{A}')$ in the Minkowski
ring $M({\cal{P}}(W))$.
\end{proposition}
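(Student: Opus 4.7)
The plan is to view $\id(P;\mathcal{A}')$ as a product that can be split into a part indexed by $\mathcal{A}\setminus\{A\}$ and a part indexed by $\mathcal{B}$, and then to rewrite the $\mathcal{B}$-part so that a factor of $([P]-[A])$ comes out, which will allow us to recover $\id(P;\mathcal{A})$ and use the hypothesis.

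Concretely, from $\mathcal{A}' = (\mathcal{A}\setminus\{A\})\cup\mathcal{B}$ I would write
\[
\prod_{F\in \mathcal{A}'}([P]-[F]) = \Bigl(\prod_{F\in \mathcal{A}\setminus\{A\}}([P]-[F])\Bigr)\cdot\prod_{F\in \mathcal{B}}([P]-[F]).
\]
Next, for each $F\in\mathcal{B}$ use the trivial decomposition $[P]-[F] = ([P]-[A])+([A]-[F])$ and expand the product over $\mathcal{B}$ binomially as
\[
\prod_{F\in \mathcal{B}}([P]-[F])
= \sum_{S\subseteq \mathcal{B}} ([P]-[A])^{|S|}\prod_{F\in \mathcal{B}\setminus S}([A]-[F]).
\]
The term $S=\emptyset$ equals $\prod_{F\in \mathcal{B}}([A]-[F])$, which is zero by Proposition~\ref{prp:cover} applied to the polytope $A$ with the vertex-cover $\mathcal{B}\in\tilde{\mathcal{C}}_0(A)$. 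Every remaining term contains at least one factor $([P]-[A])$, so we can factor it out and write $\prod_{F\in\mathcal{B}}([P]-[F]) = ([P]-[A])\cdot Q$ for some $Q\in M(\mathcal{P}(W))$.

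Plugging this back in and recombining the factor $([P]-[A])$ with the product over $\mathcal{A}\setminus\{A\}$ gives
\[
\prod_{F\in \mathcal{A}'}([P]-[F]) = Q\cdot\Bigl(([P]-[A])\prod_{F\in \mathcal{A}\setminus\{A\}}([P]-[F])\Bigr) = Q\cdot \prod_{F\in \mathcal{A}}([P]-[F]),
\]
and the right-hand side vanishes by the assumed identity $\id(P;\mathcal{A})$. There is no real obstacle here beyond being careful that the binomial expansion is legitimate in the commutative Minkowski ring and that $\mathcal{B}\in\tilde{\mathcal{C}}_0(A)$ is the correct hypothesis to invoke Proposition~\ref{prp:cover} at the intermediate polytope $A$; the antichain property of $\mathcal{A}$ and $\mathcal{A}'$ plays no role in the algebraic derivation, it is simply the normalization under which the covering relation is stated.
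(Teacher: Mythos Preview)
Your proof is correct and follows essentially the same route as the paper's: split the product over $\mathcal{A}'$ into the $\mathcal{A}\setminus\{A\}$ part and the $\mathcal{B}$ part, expand each $[P]-[F]=([P]-[A])+([A]-[F])$ binomially over $\mathcal{B}$, kill the term with no $([P]-[A])$ factor via $\id(A;\mathcal{B})$, and absorb the remaining factor $([P]-[A])$ into $\pi_A$ to recover $\id(P;\mathcal{A})$.

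The only difference is cosmetic: the paper frames the argument as an induction on $d=\dim P$ and invokes the ``induction hypothesis'' to get $\id(A;\mathcal{B})=0$, whereas you simply cite Proposition~\ref{prp:cover} directly for the face $A$. Your route is actually cleaner, since $\id(A;\mathcal{B})$ is exactly an instance of Proposition~\ref{prp:cover} and no induction is needed to obtain it. Your closing remark that the antichain hypothesis plays no algebraic role is also accurate; it is only used implicitly to ensure that $(\mathcal{A}\setminus\{A\})$ and $\mathcal{B}$ are disjoint so that the product over $\mathcal{A}'$ factors as written.
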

\begin{proof}
We will prove this by induction on $d$, the dimension of the polytope $P$.
Being trivially true for $d=1$, we assume $\id(P;\cal{A})$ holds and
show that $\id(P;\cal{A}')$ will then hold as well.
Since $\cal{A}'$ covers $\cal{A}$
there is a face $A\in\cal{A}$ and a cover $\cal{B}\in \tilde{\cal{C}}_0(A)$ of 
the vertices of $A$ such that
$\cal{A}' = (\cal{A}\setminus\{A\})\cup\cal{B}$. In this case we obtain 
\[
\prod_{F\in{\cal{A}'}}([P] - [F])
= \prod_{F\in(\cal{A}\setminus\{A\})\cup\cal{B}}([P] - [F]) 
= \prod_{F\in\cal{A}\setminus\{A\}}([P] - [F])\cdot
\prod_{F\in\cal{B}}([P] - [F]).
\]
Letting $\pi_A := \prod_{F\in\cal{A}\setminus\{A\}}([P] - [F])$ and
$m = |\cal{B}|$ we then get by direct manipulation that 
\begin{eqnarray*}
\prod_{F\in{\cal{A}'}}([P] - [F])
& = & \pi_A\prod_{F\in\cal{B}}([P] - [F]) \\
& = & \pi_A\prod_{F\in\cal{B}}([P] - [A] + [A] - [F]) \\
& = & \pi_A\sum_{i=0}^m\left( ([P] - [A])^i
\sum_{\substack{{\cal{S}}\subseteq{\cal{B}}\\|\cal{S}| = m-i}}
\prod_{F\in\cal{S}}([A] - [F])\right) \\
& = & \pi_A\prod_{F\in\cal{B}}([A] - [F])\\
&   & + \pi_A([P] - [A])\sum_{i=1}^m\left( ([P] - [A])^{i-1}
\sum_{\substack{{\cal{S}}\subseteq{\cal{B}}\\|\cal{S}| = m-i}}
\prod_{F\in\cal{S}}([A] - [F])\right).
\end{eqnarray*}
Since by assumption we have
$\pi_A([P] - [A]) = \prod_{F\in\cal{A}}([P] - [F]) = 0$, 
and since the dimension of $A$ is less than $d$, the dimension of $P$,
then we have by induction hypothesis that the identity
$\id(A;\cal{B}) : \prod_{F\in{\cal{B}}}([A] - [F]) = 0$ also holds
in $M({\cal{P}}(W))$ and hence we obtain from the above display that
\[
\prod_{F\in{\cal{A}'}}([P] - [F]) = \pi_A\prod_{F\in\cal{B}}([A] - [F])
+ 0\cdot \sum_{i=1}^m\left( ([P] - [A])^{i-1}
\sum_{\substack{{\cal{S}}\subseteq{\cal{B}}\\|\cal{S}| = m-i}}
\prod_{F\in\cal{S}}([A] - [F])\right) = 0
\]
which proves the proposition.
\end{proof}
We can define a partial order $\preceq$ on $\tilde{\cal{A}}_0(P)$ by
letting $\cal{A}\preceq\cal{A}'$ if there is a finite sequence
$\cal{A} = \cal{A}_0, \cal{A}_1,\ldots,\cal{A}_k = \cal{A}'$ such that
for each $\cal{A}_{i+1}$ covers $\cal{A}_i$. Since the empty sequence
where $k= 0$ is allowed and for each $i$ we have
\[
|\cal{A}_{i+1}| = |(\cal{A}_i\setminus A)\cup\cal{B}|
= |\cal{A}_i| - 1 + |\cal{B}| \geq |\cal{A}_i|,
\]
then $\preceq$ is indeed a partial order. By the above
Proposition~\ref{prp:cover-id} we obtain the following corollary.
\begin{corollary}
\label{cor:minimal}
If $P$ is a $d$-polytope, then all the identities $\id(P;\cal{C})$
where $\cal{C}\in\tilde{\cal{C}}_0(P)$ are implied by the identities
$\id(P;\cal{C})$ where $\cal{C}$ is a minimal element of the poset
$(\tilde{\cal{A}}_0(P),\preceq)$.
\end{corollary}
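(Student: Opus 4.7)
The plan is to reduce any $\cal{C}\in\tilde{\cal{C}}_0(P)$ to a minimal antichain in two stages and then invoke Proposition~\ref{prp:cover-id}.

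First I would pass from $\cal{C}$ to an antichain $\cal{A}\subseteq\cal{C}$ by iteratively deleting any face of $\cal{C}$ that is strictly contained in another face of $\cal{C}$. Because any vertex of $P$ lying in a deleted face $F$ also lies in the larger face of $\cal{C}$ that contains $F$, the resulting antichain still covers all vertices, so $\cal{A}\in\tilde{\cal{A}}_0(P)$. The factorization
\[
\prod_{F\in\cal{C}}([P]-[F]) = \Bigl(\prod_{F\in\cal{A}}([P]-[F])\Bigr)\cdot\prod_{F\in\cal{C}\setminus\cal{A}}([P]-[F])
\]
shows immediately that $\id(P;\cal{A})$ implies $\id(P;\cal{C})$, so it suffices to handle identities indexed by antichains.

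Next I would find a $\preceq$-minimal antichain below $\cal{A}$. Finiteness of $\cal{F}(P)$ makes $\tilde{\cal{A}}_0(P)$ finite, so the non-empty set $\{\cal{A}'\in\tilde{\cal{A}}_0(P) : \cal{A}'\preceq\cal{A}\}$ (which contains $\cal{A}$ itself, via the empty chain) has a $\preceq$-minimal element $\cal{A}_0$; by transitivity of $\preceq$, such an $\cal{A}_0$ is automatically minimal in the entire poset. By the definition of $\preceq$ there is a witnessing chain $\cal{A}_0=\cal{B}_0,\cal{B}_1,\ldots,\cal{B}_k=\cal{A}$ with each $\cal{B}_{i+1}$ covering $\cal{B}_i$, and iterated application of Proposition~\ref{prp:cover-id} propagates $\id(P;\cal{A}_0)$ up through $\id(P;\cal{B}_1),\ldots,\id(P;\cal{A})$. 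Combining with the first stage yields $\id(P;\cal{A}_0)\Rightarrow \id(P;\cal{C})$, as required.

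The hard part is essentially organizational rather than algebraic: no new identity has to be established beyond what Proposition~\ref{prp:cover-id} already provides. The only subtle points are verifying that the antichain obtained by pruning $\cal{C}$ still covers all vertices of $P$ (which rests on transitivity of set inclusion) and that the minimal $\cal{A}_0$ chosen among the $\preceq$-descendants of $\cal{A}$ is in fact minimal in the whole poset (which rests on transitivity of $\preceq$). Both are immediate, so the corollary reduces to a clean two-step chase.
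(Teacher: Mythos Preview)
Your proposal is correct and follows essentially the same route as the paper: the text preceding the corollary already reduces an arbitrary $\cal{C}\in\tilde{\cal{C}}_0(P)$ to an antichain in $\tilde{\cal{A}}_0(P)$ (your first stage), and the paper then simply states that the corollary follows from Proposition~\ref{prp:cover-id} (your second stage, iterated along a chain to a $\preceq$-minimal element). You have merely made explicit the finiteness and transitivity observations that the paper leaves tacit.
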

Note that the identity $\id(P;\cal{F}_0(P)) = \id(P;V(P))$
is only a minimal element
of $(\tilde{\cal{A}}_0(P),\preceq)$ when $d = 1$.
In all other cases for $d\geq 2$, it always follows from other ``shorter''
(i.e.~of smaller degree in $[P]$ as a polynomial) identities.
\begin{corollary}
\label{cor:only-dim-1}
If $P$ is a $d$-polytope and $V(P)$ is the set of vertices of $P$, then
the identity $\id(P;\cal{F}_0(P))$:
\[
\prod_{v\in V(P)}([P] - [v]) = \prod_{F\in\cal{F}_0(P)}([P] - [F]) = 0
\]
is only a defining identity when $d=1$ and the polytope $P$ is a closed
interval.
\end{corollary}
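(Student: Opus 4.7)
The strategy is to split on the dimension $d$ of $P$ and, in each case, decide whether $V(P)=\cal{F}_0(P)$ is a minimal element of the poset $(\tilde{\cal{A}}_0(P),\preceq)$, since by Corollary~\ref{cor:minimal} this is precisely the condition for $\id(P;V(P))$ to be a defining identity.

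When $d=1$, $P=[a,b]$ is a closed interval and every proper face of $P$ is already a vertex, so $\cal{F}(P)=V(P)$. Consequently the only antichain of proper faces that covers the two vertices is $V(P)$ itself; this forces $V(P)$ to be the unique (and hence minimal) element of $\tilde{\cal{A}}_0(P)$, and the identity $\id(P;V(P))$ is a defining identity.

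For the main case $d\geq 2$, I would exhibit an antichain $\cal{A}\in \tilde{\cal{A}}_0(P)$ with $\cal{A}\prec V(P)$. Because $d\geq 2$, the $1$-skeleton of $P$ is non-empty, so I may fix an edge $e$ of $P$ with endpoints $v,w$ and set
\[
\cal{A}:=\{e\}\cup\bigl(V(P)\setminus\{v,w\}\bigr).
\]
Two routine checks are needed: first, that $\cal{A}$ is an antichain of proper faces of $P$ (vertices other than $v,w$ do not lie on $e$, and distinct $0$-faces are incomparable both with $e$ and with one another), and second, that $\cal{A}$ covers every vertex of $P$, so $\cal{A}\in \tilde{\cal{A}}_0(P)$. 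I would then verify that $V(P)$ covers $\cal{A}$ in a single step in the sense of the paragraph preceding Proposition~\ref{prp:cover-id}: choose $A=e$ and $\cal{B}=\{v,w\}$, which covers the vertices of the $1$-polytope $e$, and note that $(\cal{A}\setminus\{e\})\cup\cal{B}=V(P)$ is still an antichain. Thus $\cal{A}\preceq V(P)$, and $\cal{A}\neq V(P)$ since $e\in\cal{A}\setminus V(P)$. Hence $V(P)$ is not minimal in $(\tilde{\cal{A}}_0(P),\preceq)$, and by Corollary~\ref{cor:minimal} the identity $\id(P;V(P))$ is implied by the strictly shorter identity $\id(P;\cal{A})$.

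I do not anticipate a genuine obstacle here: the statement is ultimately a combinatorial observation about antichains of faces of a polytope, and all of the ring-theoretic work of translating one covering identity into another has already been done in Proposition~\ref{prp:cover-id}. The only mildly subtle point is checking that the one-step replacement from $\cal{A}$ to $V(P)$ stays within antichains, which is immediate from the containment structure of faces.
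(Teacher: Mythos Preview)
Your proposal is correct and follows essentially the same approach as the paper: the paper merely remarks, just before stating the corollary, that $V(P)$ is a minimal element of $(\tilde{\cal{A}}_0(P),\preceq)$ only when $d=1$, and that for $d\geq 2$ the identity $\id(P;V(P))$ always follows from shorter ones. Your argument makes this explicit by exhibiting the antichain $\{e\}\cup(V(P)\setminus\{v,w\})$ strictly below $V(P)$, which is exactly the kind of witness the paper's remark leaves implicit; the only minor imprecision is the claim ``$\cal{F}(P)=V(P)$'' in the $d=1$ case, which should read that the \emph{proper} faces of the interval are precisely its two vertices.
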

\begin{rmk*}
We note that $(z-1)(z-x_1)(z-x_2)$ is contained in
the ideal $I$ generated by $G_1\cup G_2$ from 
(\ref{eqn:dim-1}) and (\ref{eqn:dim-2}) respectively, consistent
with Corollaries~\ref{cor:minimal} and~\ref{cor:only-dim-1}.
\end{rmk*}
Let $P$ be a $d$-polytope and ${\cal{C}} = {\cal{C}}(P)$ a set of
faces of $P$. Corollary~\ref{cor:only-dim-1} naturally yields the
question of when exactly an identity $\id(P;{\cal{C}}(P)$ holds.
We will now show that in order for $\id(P;{\cal{C}})$ to hold in
the Minkowski ring $M({\cal{F}}(P))$, then ${\cal{C}}$ must cover
all the vertices of $P$. 

Suppose ${\cal{C}} = \{F_1,\ldots,F_k\}$ does not cover all the vertices
of $P$, so there is a vertex $A \in V(P) = {\cal{F}}_0(P)$ 
such that $A \not\in \cv(F_1,\ldots,F_k)$. Suppose $\id(P;{\cal{C}})$ is
an identity that holds in $M({\cal{F}}(P))$. In that case we have
\begin{eqnarray*}
  0 & = & \prod_{F\in {\cal{C}}}([P] - [F]) \\
    & = & \prod_{i=1}^k([P] - [F_i]) \\
    & = & [P]^k - \sum_{i=1}^k[P]^{k-1}[F_i]
+ \sum_{i<j}[P]^{k-2}[F_i][F_j] - + \cdots
+ (-1)^k [F_1]\cdots[F_k].
\end{eqnarray*}
By the definition of multiplication in a Minkowski ring we then have
\begin{equation}
\label{eqn:indicator}  
0 = [kP] - \sum_{i=1}^k[(k-1)P + F_i]
+ \sum_{i<j}[(k-2)P + F_i + F_j] - + \cdots
+ (-1)^k [F_1 + \cdots + F_k].
\end{equation}
Note that the above indicator function holds when evaluated
at every point in ${\reals}^d$, in particular at the point $kA\in {\reals}^d$
which is a vertex of $kP$. We first note that $[kP](kA)  = 1$.
\begin{claim}
\label{clm:kA-not-in}
For any $\ell\in \{1,\ldots, k\}$ and faces
$F_{i_1},\ldots, F_{i_{\ell}}$  we have
$kA\not\in (k-\ell)P + F_{i_1} + \cdots + F_{i_{\ell}}$.
\end{claim}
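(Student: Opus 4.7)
The plan is to use a separating linear functional argument based on the fact that $A$ is a vertex of $P$ and that $A \notin \cv(F_1,\ldots,F_k)$. First I would choose a linear functional $\lambda : \reals^d \to \reals$ whose maximum over $P$ is attained uniquely at $A$; this is possible precisely because $A$ is a vertex of the polytope $P$. Thus $\lambda(A) > \lambda(p)$ for every $p \in P \setminus \{A\}$. Each face $F_{i_j}$ is contained in $\cv(F_1,\ldots,F_k)$, which by hypothesis does not contain $A$, so $A \notin F_{i_j}$ for every $j \in \{1,\ldots,\ell\}$; consequently the strict inequality $\lambda(f) < \lambda(A)$ holds for every $f \in F_{i_j}$.

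Next I would suppose, for contradiction, that $kA \in (k-\ell)P + F_{i_1} + \cdots + F_{i_\ell}$, so that $kA = (k-\ell)p + f_1 + \cdots + f_\ell$ for some $p \in P$ and $f_j \in F_{i_j}$. Applying $\lambda$ and using $\lambda(p) \le \lambda(A)$ together with the strict bounds $\lambda(f_j) < \lambda(A)$, I would obtain
\[
k\lambda(A) \;=\; (k-\ell)\lambda(p) + \sum_{j=1}^{\ell}\lambda(f_j) \;<\; (k-\ell)\lambda(A) + \ell\lambda(A) \;=\; k\lambda(A),
\]
which is the sought contradiction. Crucially the assumption $\ell \ge 1$ ensures that at least one strictly-less-than term appears in the sum.

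The only point that takes a moment of care is verifying that a single functional $\lambda$ can serve both roles: singling out $A$ as the unique maximizer on $P$ (for the weak inequality on $p$) and separating $A$ from each face $F_{i_j}$ (for the strict inequality on $f_j$). This compatibility is automatic from $F_{i_j} \subseteq P$ together with $A \notin F_{i_j}$: any $f \in F_{i_j}$ is then a point of $P$ different from $A$, and the uniqueness of the maximum at $A$ forces $\lambda(f) < \lambda(A)$. Thus there is no real obstacle here; the content of the claim is a clean convex-geometric separation, and the hypothesis $A \notin \cv(F_1,\ldots,F_k)$ is invoked solely to ensure $A \notin F_{i_j}$ for every $j$, which is exactly what produces the strict inequality making the identity in (\ref{eqn:indicator}) fail at the point $kA$.
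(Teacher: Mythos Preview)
Your proof is correct and follows essentially the same approach as the paper: both arguments choose a linear functional uniquely maximized on $P$ at the vertex $A$, observe that $A\notin F_{i_j}$ gives a strict inequality on each $F_{i_j}$, and derive the contradiction by summing. The only cosmetic difference is that the paper writes a general point of $(k-\ell)P$ as $T_1+\cdots+T_{k-\ell}$ with each $T_i\in P$, whereas you use $(k-\ell)p$ for a single $p\in P$; these are equivalent since $P$ is convex.
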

\begin{proof}
Since $A \not\in \cv(F_1,\ldots,F_k) := Q$, there is a hyperplane $H$ with a
normal vector $\tilde{a}$ separating $A$ from $Q$. Here $H$ can be taken as a
translate of any supporting hyperplane of the vertex $A\in P$, so the function
$\tilde{x}\mapsto\tilde{a}\cdot\tilde{x}$ restricted to $P$ is maximized at
$\tilde{x} = \overrightarrow{OA}\in {\reals}^d$.
Since for any point $B_j\in F_{i_j}$
we have
$\tilde{a}\cdot\overrightarrow{OB_j} < \tilde{a}\cdot\overrightarrow{OA}$
we also have for a general point
$T = T_1 + \cdots + T_{k-\ell} + B_1 + \cdots + B_{\ell}$
of $(k-\ell)P + F_{i_1} + \cdots + F_{i_{\ell}}$ that 
\[
\tilde{a}\cdot\overrightarrow{OT} = \tilde{a}\cdot\overrightarrow{OT_1}
+ \cdot + \tilde{a}\cdot\overrightarrow{OT_{k-\ell}}
+ \tilde{a}\cdot\overrightarrow{OB_1}
+ \cdot + \tilde{a}\cdot\overrightarrow{OB_{\ell}}
< k(\tilde{a}\cdot\overrightarrow{OA})
= \tilde{a}\cdot\overrightarrow{O\/kA},
\]
and so $kA\not\in (k-\ell)P + F_{i_1} + \cdots + F_{i_{\ell}}$.
\end{proof}

By Claim~\ref{clm:kA-not-in} we have that
$[(k-\ell)P + F_{i_1} + \cdots + F_{i_{\ell}}](kA) = 0$ for each
$\ell\in\{1,\ldots,k\}$ and hence from (\ref{eqn:indicator}) when evaluated
at $kA$ we obtain
\[
\begin{split}
0 & = \\
  & \left([kP] - \sum_{i=1}^k[(k-1)P + F_i]
    + \sum_{i<j}[(k-2)P + F_i + F_j] - + \cdots
    + (-1)^k [F_1 + \cdots + F_k]\right)(kA) \\
  & = [kP](kA) \\
  & - \sum_{i=1}^k[(k-1)P + F_i](kA)
    + \sum_{i<j}[(k-2)P + F_i + F_j](kA)
    - + \cdots + (-1)^k [F_1 + \cdots + F_k](kA) \\
  & = 1 \\
  & - \sum_{i=1}^k0
    + \sum_{i<j}0
    - + \cdots + (-1)^k0 \\
  & = 1,  
\end{split}
\]
a blatant contradiction. We therefore have the following.
\begin{proposition}
\label{prp:indicator-iff}
If $P$ is a $d$-polytope and ${\cal{C}} = {\cal{C}}(P)$ a set of
faces of $P$, then $\id(P;{\cal{C}})$ is an identity in
the Minkowski ring $M({\cal{F}}(P))$ if and only if
${\cal{C}}$ covers all the vertices of $P$.
\end{proposition}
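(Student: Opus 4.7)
The plan is to prove the biconditional by handling the two directions separately. The ``if'' direction is immediate from Proposition~\ref{prp:cover}: any $\cal{C}$ that covers $V(P)$ yields the identity $\id(P;\cal{C})$ in $M(\cal{F}(P))\subseteq M(\cal{P}(W))$. All the real work lies in the converse: if $\cal{C}=\{F_1,\ldots,F_k\}$ fails to cover some vertex $A\in V(P)$, then $\prod_{F\in\cal{C}}([P]-[F])\neq 0$.

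To produce a nonzero value, I will evaluate $\prod_{i=1}^k([P]-[F_i])$ as an honest function on $W={\reals}^d$ at a cleverly chosen point, namely $kA$, which is a vertex of the scaled polytope $kP$. Expanding the product using $[X]\cdot[Y]=[X+Y]$ gives the alternating sum
\[
[kP] \;-\; \sum_{i}[(k-1)P+F_i] \;+\; \sum_{i<j}[(k-2)P+F_i+F_j] \;-\;\cdots\;+\;(-1)^k[F_1+\cdots+F_k].
\]
The leading term satisfies $[kP](kA)=1$ since $kA\in kP$. The heart of the argument is then to show that every Minkowski sum $(k-\ell)P+F_{i_1}+\cdots+F_{i_\ell}$ with $\ell\geq 1$ fails to contain $kA$, so that evaluation at $kA$ gives $1\neq 0$, contradicting the assumed identity.

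The containment failures are established by a single separating-hyperplane argument. Since $A$ is a vertex of $P$ and none of the $F_i$ contains $A$ (as $\cal{C}$ does not cover $A$), there is a linear functional $\tilde{a}\in\reals^d$ that is maximized on $P$ uniquely at $A$; consequently $\tilde{a}\cdot B<\tilde{a}\cdot A$ for every $B\in F_i$ and every $i$. For any decomposition $T=T_1+\cdots+T_{k-\ell}+B_1+\cdots+B_\ell$ of a point of $(k-\ell)P+F_{i_1}+\cdots+F_{i_\ell}$, summing these inequalities produces the strict bound $\tilde{a}\cdot T<k(\tilde{a}\cdot A)=\tilde{a}\cdot(kA)$, so $kA$ is excluded from that Minkowski sum whenever $\ell\geq 1$.

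The main obstacle is the hyperplane-separation step, and within it the most delicate point is upgrading the hypothesis ``$A$ is not a vertex of any $F_i$'' to the geometric statement ``$A\notin\cv(F_1\cup\cdots\cup F_k)$'' needed to produce the strict inequalities; this rests on the fact that since $A$ is a vertex of $P$, one may pick $\tilde{a}$ so that $\{x\in P:\tilde{a}\cdot x=\tilde{a}\cdot A\}=\{A\}$, forcing every face of $P$ not containing $A$ to lie strictly below the level $\tilde{a}\cdot A$. Once this is in hand the alternating-sum evaluation at $kA$ collapses to $1$, completing the contradiction and proving the proposition.
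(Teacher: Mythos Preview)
Your proposal is correct and follows essentially the same route as the paper: the ``if'' direction is deferred to Proposition~\ref{prp:cover}, and for the converse you expand $\prod_i([P]-[F_i])$ as an alternating sum of Minkowski sums, evaluate at $kA$, and use a linear functional uniquely maximized on $P$ at the uncovered vertex $A$ to show that every term with $\ell\geq 1$ vanishes at $kA$, yielding the contradiction $0=1$. Your treatment of the separating-functional step is in fact slightly more explicit than the paper's, but the argument is the same.
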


\section{The Cartesian product of polytopes}
\label{sec:Cartesian}

There is a well-known and useful homomorphism,
$\bar \chi : {\cal S} \rightarrow \ints$,
mapping the additive group generated by
the indicator functions of closed, convex polyhedra to $\ints$,
namely, the {\em Euler homomorphism} (see~\cite{Hadwiger, Beck-and-Robins, Barvinok, Lawrence-(val+pol), Lawrence-(Euler), Lawrence, Groemer}).
Its value is $1$ on the indicator function of any nomempty, closed, convex
polyhedron, and this property characterizes it. If $f \in \cal S$
is the indicator function of a compact polyhedron then $\bar \chi(f)$
is the usual Euler characteristic of the polyhedron.  Application of this
homomorphism yields the following proposition.
\begin{proposition}
\label{prp:n=m}
If $C_1,\ldots,C_n, C_1',\ldots,C_m'$ are nonempty closed convex polyhedra
in $W = {\reals}^d$ such that $\sum_{i=1}^n [C_i] = \sum_{j=1}^m[C_j']$,
then $m = n$.
\end{proposition}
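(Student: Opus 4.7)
The plan is to apply the Euler homomorphism $\bar\chi : \cal{S} \to \ints$ introduced in the paragraph immediately preceding the proposition. The defining property quoted there is exactly what makes the argument a one-liner: $\bar\chi$ sends the indicator function $[C]$ of every nonempty closed convex polyhedron $C$ to $1$, and the map is a homomorphism of additive groups.

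Given the hypothesis $\sum_{i=1}^n [C_i] = \sum_{j=1}^m [C_j']$ as elements of $\cal{S}$, I would apply $\bar\chi$ to both sides. By additivity,
\[
\sum_{i=1}^n \bar\chi([C_i]) = \sum_{j=1}^m \bar\chi([C_j']).
\]
Since each $C_i$ and each $C_j'$ is a nonempty closed convex polyhedron, every term on each side equals $1$, so the equation collapses to $n = m$.

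There is essentially no obstacle once the existence and characterizing property of $\bar\chi$ are taken as given (the references cited in the excerpt supply these). The only subtle point worth flagging in the write-up is that $\bar\chi$ is defined on all of $\cal{S}$, the additive group generated by indicator functions of closed convex polyhedra, so the linear combinations on both sides of the hypothesized equality lie in the domain of $\bar\chi$ and the homomorphism property applies directly. Note that convexity and nonemptiness are both essential: without nonemptiness we would have $\bar\chi([\emptyset]) = 0$, and without convexity the characterizing normalization would not apply to the summands.
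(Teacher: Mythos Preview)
Your proof is correct and is exactly the argument the paper has in mind: the sentence preceding the proposition already says ``Application of this homomorphism yields the following proposition,'' and you have simply written out that application in full.
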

Consider now the Minkowski ring $M^{\pm}(P_1,\ldots,P_n)$ of a finite set of
polytopes in $W = {\reals}^d$, for example ${\cal{F}}(P)$
the set of all the nonempty faces of a given
$d$-dimensional polytope $P$,
including $P$ itself.
As a ${\comps}$-algebra we have
$M^{\pm}(P_1,\ldots,P_n) \cong {\comps}[x_1,\ldots,x_n]^{\pm}/I$, where
the Minkowski ideal $I$ is kernel of the 
surjection
$\phi: {\comps}[x_1,\ldots,x_n]^{\pm}
\twoheadrightarrow M^{\pm}(P_1,\ldots,P_n)$.
Since $I$ is an extended ideal, obtained by tensoring with ${\comps}$,
the ideal $I$ is generated by finitely many elements from
${\ints}[x_1,\ldots,x_n]$. Assume
$f = \sum_{i=1}^Nc_i\tilde{x}^{\tilde{\alpha}_i}\in I$ where
$\tilde{x}^{\tilde{\alpha}_i} = x_1^{\alpha_{i\/1}}\cdots x_n^{\alpha_{i\/n}}$ and each
$c_i\in {\ints}$. We can therefore write
\begin{equation}
\label{eqn:diff-bc}  
f = \sum_{i=1}^Na_i\tilde{x}^{\tilde{\alpha}_i} -
\sum_{i=1}^Nb_i\tilde{x}^{\tilde{\beta}_i},
\end{equation}
where each $a_i, b_i\in {\nats}\cup \{0\} =\{0,1,2,3,\ldots\}$. Since
$\phi(\tilde{x}^{\tilde{\alpha}})
= [\alpha_{1}P_1 + \cdots + \alpha_{n}P_n]$
as a Minkowski sum then, 
since $f \in I = \ker(\phi)$, we get by (\ref{eqn:diff-bc}) the following
identity in $M^{\pm}(P_1,\ldots,P_n)$
\[
0 = \phi(f) = \sum_{i=1}^Na_i[\alpha_{i\/1}P_1 + \cdots + \alpha_{i\/n}P_n]
- \sum_{i=1}^Nb_i[\alpha_{i\/1}P_1 + \cdots + \alpha_{i\/n}P_n].
\]
Since each polytope $P_i$ is a nonempty closed convex polyhdron, and also
their Minkowski sums, then by Proposition~\ref{prp:n=m} we get that
$\sum_{i=1}^Na_i = \sum_{i=1}^Nb_i$ and hence we have the following.
\begin{theorem}
\label{thm:eval-1}
If $P_1,\ldots,P_n$ are polytopes in $W = {\reals}^d$ and
$M^{\pm}(P_1,\ldots,P_n) \cong {\comps}[x_1,\ldots,x_n]^{\pm}/I$ as
$\comps$-algebras, then for any $f\in I$ we have $f(\tilde{1}) = 0$,
where $\tilde{1} = (1,1,\ldots,1)$.
\end{theorem}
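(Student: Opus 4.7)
The plan is to exploit Proposition~\ref{prp:n=m} (equivalently, the Euler-characteristic homomorphism $\bar\chi$) to show that any integer-coefficient $f \in I$ has the sum of its positive coefficients equal to the sum of its negative coefficients, which is precisely the assertion $f(\tilde{1}) = 0$. First I would reduce to integer coefficients by invoking the paper's observation that $I$, viewed as an ideal of $\comps[x_1,\ldots,x_n]^{\pm}$, is generated by finitely many elements of $\ints[x_1,\ldots,x_n]^{\pm}$; since the evaluation map $f \mapsto f(\tilde{1})$ is a $\comps$-algebra homomorphism $\comps[x_1,\ldots,x_n]^{\pm}\to\comps$, its kernel is a $\comps$-ideal, so it suffices to verify the claim on $I \cap \ints[x_1,\ldots,x_n]^{\pm}$.

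For such an $f$, I would decompose it as $f = f^+ - f^-$, where
$f^+ = \sum_i a_i\tilde{x}^{\tilde{\alpha}_i}$ and $f^- = \sum_j b_j\tilde{x}^{\tilde{\beta}_j}$ have nonnegative integer coefficients and share no monomials, so that $f(\tilde{1}) = \sum_i a_i - \sum_j b_j$. Applying $\phi$ to the relation $f \in \ker\phi$ turns it into the Minkowski-ring identity
\[
\sum_i a_i\bigl[\alpha_{i,1}P_1 + \cdots + \alpha_{i,n}P_n\bigr]
= \sum_j b_j\bigl[\beta_{j,1}P_1 + \cdots + \beta_{j,n}P_n\bigr]
\]
in ${\cal S}$, which is an equality of (multiset) sums of indicator functions of nonempty closed convex polyhedra, because each Minkowski sum of nonempty polytopes remains nonempty. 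Proposition~\ref{prp:n=m} then forces $\sum_i a_i = \sum_j b_j$, which is exactly $f(\tilde{1}) = 0$.

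The only delicate step is the reduction from $\comps$- to $\ints$-coefficients; after that, the argument is essentially a single application of Proposition~\ref{prp:n=m}. As an alternative phrasing of the core step, one can apply the Euler homomorphism $\bar\chi$ directly to the identity $\phi(f) = 0$ in ${\cal S}$: since $\bar\chi$ sends the indicator function of every nonempty closed convex polyhedron to $1$, this immediately yields $\sum_i a_i - \sum_j b_j = 0$, bypassing the explicit appeal to Proposition~\ref{prp:n=m}.
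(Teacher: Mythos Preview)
Your proof is correct and follows essentially the same path as the paper: reduce to integer-coefficient generators of $I$, split $f$ into its positive and negative parts, apply $\phi$, and invoke Proposition~\ref{prp:n=m} to equate the two coefficient sums. The paper makes the same moves in the same order, and your alternative phrasing via $\bar\chi$ is precisely what underlies Proposition~\ref{prp:n=m}; if anything, your justification of the reduction to integer coefficients (via the evaluation map having an ideal as kernel) is more explicit than the paper's.
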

Recall an important corollary of
Hilbert's Nullstellensatz~\cite[p.~85]{Atiyah-Macdonald} 
and~\cite[p.~410]{Aluffi} as the following~\cite[Cor.~2.10, p.~406]{Aluffi}.
\begin{theorem}
\label{thm:HilbertNSS-cor}
If $K$ is an algebraically closed field, then an ideal of 
$K[x_1,\ldots,x_n]$ is maximal if and only if it has the form 
$\m_{\tilde{a}} = (x_1-a_1,\ldots,x_n-a_n)$ for some $\tilde{a}\in K^n$.
\end{theorem}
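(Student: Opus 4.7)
The plan is to treat the two implications separately, deducing the forward direction by a direct quotient calculation and the reverse direction from (weak) Hilbert's Nullstellensatz, which is the nontrivial input.

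For the easy direction, I would fix $\tilde{a}=(a_1,\ldots,a_n)\in K^n$ and consider the evaluation homomorphism $\mathrm{ev}_{\tilde{a}}: K[x_1,\ldots,x_n]\to K$ sending $f\mapsto f(\tilde{a})$. This is a surjective $K$-algebra map, and since $x_i-a_i\in\ker(\mathrm{ev}_{\tilde{a}})$ for each $i$, we get the containment $(x_1-a_1,\ldots,x_n-a_n)\subseteq \ker(\mathrm{ev}_{\tilde{a}})$. The reverse containment is a straightforward induction: any $f$ can be rewritten modulo $(x_1-a_1,\ldots,x_n-a_n)$ by successively replacing each $x_i$ with $a_i$, leaving only $f(\tilde{a})\in K$. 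Hence $K[x_1,\ldots,x_n]/(x_1-a_1,\ldots,x_n-a_n)\cong K$, a field, so the ideal $\m_{\tilde{a}}$ is maximal.

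For the harder direction, let $\m$ be a maximal ideal of $K[x_1,\ldots,x_n]$ and set $L:=K[x_1,\ldots,x_n]/\m$. Then $L$ is a field which is finitely generated as a $K$-algebra (by the images $\bar{x}_1,\ldots,\bar{x}_n$ of the indeterminates). The essential step is to invoke Zariski's lemma (the weak form of Hilbert's Nullstellensatz), which asserts that any field that is finitely generated as an algebra over a subfield $K$ is in fact a finite algebraic extension of $K$. Applying this, $L/K$ is algebraic, and since $K$ is algebraically closed we conclude $L=K$. Letting $a_i\in K$ be the image of $x_i$ under the composition $K[x_1,\ldots,x_n]\twoheadrightarrow L=K$, we see that $x_i-a_i\in\m$ for every $i$, so $(x_1-a_1,\ldots,x_n-a_n)\subseteq\m$. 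By the first part of the proof, the left-hand side is already maximal, so equality holds.

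The main obstacle is Zariski's lemma itself; the rest of the argument is essentially bookkeeping. In the context of this paper I would simply cite the weak Nullstellensatz from \cite{Atiyah-Macdonald} or \cite{Aluffi} rather than reprove it, since the theorem is being invoked as a standard tool and the emphasis elsewhere in the paper is on the Minkowski-theoretic content. One minor subtlety worth noting: the hypothesis that $K$ be algebraically closed is used only in passing from ``$L/K$ algebraic'' to ``$L=K$'', and without this hypothesis one still obtains a description of maximal ideals as kernels of homomorphisms to finite algebraic extensions of $K$.
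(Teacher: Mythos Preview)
Your proof is correct and is the standard textbook argument. Note, however, that the paper does not actually prove this theorem: it is stated with the preamble ``Recall an important corollary of Hilbert's Nullstellensatz'' and simply cited from \cite[p.~85]{Atiyah-Macdonald} and \cite[Cor.~2.10, p.~406]{Aluffi}, so there is no in-paper proof to compare against. Your instinct to cite Zariski's lemma rather than reprove it matches the paper's own treatment of this result as a black box.
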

Since $\comps$ is algebraically closed and the maximal ideals
of ${\comps}[x_1,\ldots,x_n]^{\pm} = M^{-1}{\comps}[x_1,\ldots,x_n]$,
where $M$ is the multiplicatively closed set
$M = \{x_1^{\alpha_1}\cdots x_n^{\alpha_n} : \alpha_1,\ldots,\alpha_n\in\ints\}$,
are in bijective correspondence
with the maximal ideals of ${\comps}[x_1,\ldots,x_n]$ that don't meet $M$,
then each maximal ideal of ${\comps}[x_1,\ldots,x_n]^{\pm}$ has the form
${\m}_{\tilde{a}}$ where each $a_i\neq 0$. From Theorem~\ref{thm:eval-1}
we have the following corollary.
\begin{corollary}
\label{cor:I-in-m1}
If $M^{\pm}(P_1,\ldots,P_n) \cong {\comps}[x_1,\ldots,x_n]^{\pm}/I$ as in
Theorem~\ref{thm:eval-1}, then
$I\subseteq {\m}_{\tilde{1}} = (x_1-1,x_2-1,\ldots, x_n-1)$.
\end{corollary}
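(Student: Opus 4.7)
The plan is to deduce this corollary directly from Theorem~\ref{thm:eval-1} by identifying $\m_{\tilde{1}}$ as the kernel of the evaluation-at-$\tilde{1}$ map. Specifically, I would consider the $\comps$-algebra homomorphism
\[
\mathrm{ev}_{\tilde{1}} : {\comps}[x_1,\ldots,x_n]^{\pm} \longrightarrow {\comps}, \qquad f \longmapsto f(\tilde{1}),
\]
which is well-defined on the Laurent polynomial ring since every coordinate of $\tilde{1}$ is nonzero, and which is obviously surjective. The kernel of this homomorphism is a maximal ideal of ${\comps}[x_1,\ldots,x_n]^{\pm}$, and by Theorem~\ref{thm:HilbertNSS-cor} (applied in the Laurent setting, as discussed in the paragraph preceding the corollary) it must equal ${\m}_{\tilde{a}}$ for some $\tilde{a}\in(\comps^*)^n$. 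A direct check shows $\tilde{a} = \tilde{1}$, so $\ker(\mathrm{ev}_{\tilde{1}}) = {\m}_{\tilde{1}} = (x_1-1,\ldots,x_n-1)$.

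With this identification in hand, the corollary is immediate: Theorem~\ref{thm:eval-1} says exactly that every $f\in I$ satisfies $f(\tilde{1})=0$, i.e., $f\in\ker(\mathrm{ev}_{\tilde{1}})={\m}_{\tilde{1}}$. Hence $I\subseteq {\m}_{\tilde{1}}$, as desired.

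There is no real obstacle here; the only content beyond quoting Theorem~\ref{thm:eval-1} is the identification of ${\m}_{\tilde{1}}$ as the evaluation kernel, which is a routine consequence of the Nullstellensatz corollary already cited. If I wanted to avoid invoking Theorem~\ref{thm:HilbertNSS-cor} altogether, I could instead verify the identification by an elementary argument: the inclusion $(x_1-1,\ldots,x_n-1)\subseteq \ker(\mathrm{ev}_{\tilde{1}})$ is obvious, and the reverse inclusion follows because any Laurent polynomial $f$ can be written, after multiplying by a suitable monomial unit, as an ordinary polynomial, and any ordinary polynomial vanishing at $\tilde{1}$ lies in $(x_1-1,\ldots,x_n-1)$ by the usual Taylor-expansion argument around $\tilde{1}$.
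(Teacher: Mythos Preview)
Your proposal is correct and follows essentially the same reasoning as the paper: the paragraph preceding the corollary identifies the maximal ideals of the Laurent ring via the Nullstellensatz and then states the corollary as an immediate consequence of Theorem~\ref{thm:eval-1}, which is exactly your argument that $I\subseteq\ker(\mathrm{ev}_{\tilde 1})={\m}_{\tilde 1}$. Your optional elementary verification of $\ker(\mathrm{ev}_{\tilde 1})={\m}_{\tilde 1}$ is a nice touch but not needed here.
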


Now consider two polytopes $P\subseteq {\reals}^d$ and
$Q\subseteq {\reals}^e$ and their Cartesian product $P\times Q$
in ${\reals}^{d+e}$. If the set of nonempty closed faces of $P$ is
${\cal{F}}(P) = \{A_1,\ldots, A_m\}$ and the set of nonempty faces of
$Q$ is ${\cal{F}}(Q) = \{B_1,\ldots,B_n\}$, then the set of closed nonempty
faces of $P\times Q$ is ${\cal{F}}(P\times Q) =
\{A_i\times B_j : 1\leq i\leq m,\ \ 1\leq j\leq n\}$. Our goal in the rest
of this section is to describe
the Minkowski ring $M^{\pm}({\cal{F}}(P\times Q))$. 

For general sets $A, A' \subseteq {\reals}^d$ and $B, B'\subseteq {\reals}^e$
we have
\begin{equation}
\label{eqn:AABB}
(A\times B) + (A'\times B') = (A + A')\times (B + B')
\end{equation}
as subsets of ${\reals}^{d+e}$. With the identification
$A\cong A\times \{\tilde{0}\}\subseteq {\reals}^{d+e}$ and
$B\cong \{\tilde{0}\}\times B \subseteq {\reals}^{d+e}$, we note
that $A\times B = (A\times \{\tilde{0}\}) + (\{\tilde{0}\}\times B)$
as a Minkowski sum of sets and so in view of this identification
we see that the identify (\ref{eqn:AABB}) reduces to the obvious identity 
$(A + B) + (A' + B') =  (A + A') + (B + B')$ as a Minkowski sum
in ${\reals}^{d+e}$. So, the Cartesian product is just a special
case of a Minkowski sum of sets. This we consider the main reason
why Minkowski rings behave well under taking Cartesian products of
polytopes as we will new see.

Suppose now we have the isomorphisms
\[
M^{\pm}({\cal{F}}(P)) \cong {\comps}[x_1,\ldots,x_m]^{\pm}/I, \ \
M^{\pm}({\cal{F}}(Q)) \cong {\comps}[y_1,\ldots,y_n]^{\pm}/J,
\]
induced by the ${\comps}$-algebra surjections
$\phi_P : {\comps}[x_1,\ldots,x_m]^{\pm}
\twoheadrightarrow M^{\pm}({\cal{F}}(P))$
and 
$\phi_Q : {\comps}[y_1,\ldots,y_n]^{\pm}
\twoheadrightarrow M^{\pm}({\cal{F}}(Q))$
determined by $\phi_P(x_i) = [A_i]$ for each $i$ and
$\phi_Q(y_j) = [B_j]$ for each $j$ respectively, so $I = \ker(\phi_P)$
and $J = \ker(\phi_Q)$. Since for a general face $A_i\times B_j$
of $P\times Q$ we have by the above mentioned identification
that $A_i\times B_j = A_i + B_j$, it corresponds to $x_iy_j$
in ${\comps}[x_1,\ldots,x_m,y_1,\ldots,y_n]^{\pm}$. Therefore
we have an isomorphism
\begin{equation}
\label{eqn:phi}  
M^{\pm}({\cal{F}}(P\times Q))
  \cong {\comps}[x_1,\ldots,x_m,y_1,\ldots,y_n]^{\pm}/L,
\end{equation}
for some ideal $L$, induced by the surjection
$\phi : {\comps}[x_1,\ldots,x_m,y_1,\ldots,y_n]^{\pm}
\twoheadrightarrow M^{\pm}({\cal{F}}(P\times Q))$
determined by $\phi(x_i) = [A_i\times \{\tilde{0}\}]$ for each $i$ and
$\phi(y_j) = [\{\tilde{0}\}\times B_j]$ for each $j$, so
$L = \ker(\phi)$. The natural 
${\comps}$-algebra surjection
\[
{\comps}[x_1,\ldots,x_m]^{\pm}\otimes {\comps}[y_1,\ldots,y_n]^{\pm}
\twoheadrightarrow
{\comps}[x_1,\ldots,x_m,y_1,\ldots,y_n]^{\pm}/L  
\]
yields a ${\comps}$-algebra surjection
\begin{equation}
\label{eqn:tau}
\tau : {\comps}[x_1,\ldots,x_m]^{\pm}/I\otimes {\comps}[y_1,\ldots,y_n]^{\pm}/J
\twoheadrightarrow
{\comps}[x_1,\ldots,x_m,y_1,\ldots,y_n]^{\pm}/L,  
\end{equation}
since the ideal generated by $I$ and $J$ is contained in $L$, that is
$(I,J)\subseteq L$, and so we have a ${\comps}$-algebra
surjection $M^{\pm}({\cal{F}}(P))\otimes M^{\pm}({\cal{F}}(Q))
    \twoheadrightarrow M^{\pm}({\cal{F}}(P\times Q))$.
We want to show that this is a ${\comps}$-algebra isomorphism.
It suffices to show that the surjection in (\ref{eqn:tau}) is injective.
Suppose $f\in L = \ker(\phi)$ from (\ref{eqn:phi}). Since
$f\in {\comps}[x_1,\ldots,x_m,y_1,\ldots,y_n]^{\pm}$, we can write
$f = \sum_{i=1}^Nc_i{\tilde{x}}^{\tilde{\alpha_i}}{\tilde{y}}^{\tilde{\beta_i}}$
where $c_i\in\ints$ for each $i$. That $f\in L$ means exactly that
\[
\begin{split}
0 & = \phi(f) \\
  & = \sum_{i=1}^Nc_i [(\alpha_{i\/1}A_1 + \cdots + \alpha_{i\/m}A_m)\times
  (\beta_{i\/1}B_1 + \cdots + \beta_{i\/n}B_m)] \\
  & = \sum_{i=1}^Nc_i
[\alpha_{i\/1}A_1 + \cdots + \alpha_{i\/m}A_m]
[\beta_{i\/1}B_1 + \cdots + \beta_{i\/n}B_m],
\end{split}
\]
where
$([X][Y])(\tilde{a},\tilde{b}) := ([X](\tilde{a}))([Y](\tilde{b}))$. 
This means that in $M^{\pm}({\cal{F}}(P\times Q))$ we have
for all $\tilde{a}\in{\reals}^d$ and $\tilde{b}\in{\reals}^e$ that
\[
0 =
(\phi(f))(\tilde{a},\tilde{b}) =
\sum_{i=1}^Nc_i
[\alpha_{i\/1}A_1 + \cdots + \alpha_{i\/m}A_m](\tilde{a})
[\beta_{i\/1}B_1 + \cdots + \beta_{i\/n}B_m](\tilde{b}).
\]
In particular, fixing $\tilde{a}\in{\reals}^d$ then
the function $\tilde{b} \mapsto (\phi(f))(\tilde{a},\tilde{b})$
is the zero function. Since
$[\alpha_{i\/1}A_1 + \cdots + \alpha_{i\/m}A_m](\tilde{a})\in\{0,1\}
\subseteq\ints$
we have that
\[
f_{\tilde{a}}(\tilde{y}) =
\sum_{i=1}^N
c_i[\alpha_{i\/1}A_1 + \cdots + \alpha_{i\/m}A_m](\tilde{a})
{\tilde{y}}^{\tilde{\beta_i}} \in J.
\]
By Theorem~\ref{thm:eval-1} we have $f_{\tilde{a}}(\tilde{1}) = 0$.
Writing this very identity out we have
\[
0
= f_{\tilde{a}}(\tilde{1})
= \sum_{i=1}^N c_i[\alpha_{i\/1}A_1 + \cdots + \alpha_{i\/m}A_m](\tilde{a})
= \left(
\sum_{i=1}^Nc_i[\alpha_{i\/1}A_1 + \cdots + \alpha_{i\/m}A_m]\right)(\tilde{a})
\]
for every $\tilde{a}\in {\reals}^d$. By definition we then
have
$f(\tilde{x},\tilde{1}) = \sum_{i=1}^Nc_i{\tilde{x}}^{\tilde{\alpha_i}}\in I$.
By the same token we have 
$f(\tilde{1},\tilde{y}) = \sum_{i=1}^Nc_i{\tilde{y}}^{\tilde{\beta_i}}\in J$.
Therefore we have that if $f = f(\tilde{x},\tilde{y})\in L$,
then $f(\tilde{x},\tilde{1})\in I$ and $f(\tilde{1},\tilde{y})\in J$.
This implies that the ${\comps}$-algebra surjection 
\[
\begin{split}
  {\comps}[x_1,\ldots,x_m,y_1,\ldots,y_n]^{\pm}
  & \cong {\comps}[x_1,\ldots,x_m]^{\pm}\otimes {\comps}[y_1,\ldots,y_n]^{\pm} \\
  & \twoheadrightarrow
  ({\comps}[x_1,\ldots,x_m]^{\pm}/I)\otimes({\comps}[y_1,\ldots,y_n]^{\pm}/J)
\end{split}
\]
given by $f \mapsto \bar{f}(\tilde{x},\tilde{1})\otimes
\bar{f}(\tilde{1},\tilde{y})$ yields a ${\comps}$-algebra
homomorphism (a surjection)
\[
\psi : {\comps}[x_1,\ldots,x_m,y_1,\ldots,y_n]^{\pm}/L \rightarrow 
({\comps}[x_1,\ldots,x_m]^{\pm}/I)\otimes
({\comps}[y_1,\ldots,y_n]^{\pm}/J)
\]
given by $\bar{f} \mapsto \bar{f}(\tilde{x},\tilde{1})\otimes
\bar{f}(\tilde{1},\tilde{y})$. If $\tau$ is the surjection in (\ref{eqn:tau}),
then we have
\[
(\psi\circ\tau)({\tilde{x}}^{\tilde{\alpha}}\otimes{\tilde{y}}^{\tilde{\beta}})
=
\psi({\tilde{x}}^{\tilde{\alpha}}{\tilde{y}}^{\tilde{\beta}})
=
({\tilde{x}}^{\tilde{\alpha}}\cdot 1)\otimes(1\cdot {\tilde{y}}^{\tilde{\beta}})
=
{\tilde{x}}^{\tilde{\alpha}}\otimes{\tilde{y}}^{\tilde{\beta}}
\]
and so $\psi\circ\tau$ is the identity homomorphism on
$({\comps}[x_1,\ldots,x_m]^{\pm}/I)\otimes({\comps}[y_1,\ldots,y_n]^{\pm}/J)$.
This implies that the surjection $\tau$ 
is injective and hence a ${\comps}$-algebra isomorphism.
We therefore have the main theorem
in this sections.
\begin{theorem}
\label{thm:Cart-isom}
For polytopes $P\subseteq {\reals}^d$ and $Q\subseteq {\reals}^e$
we have for the Minkowski ring of all the nonempty faces of $P\times Q$
that
$M^{\pm}({\cal{F}}(P\times Q))
\cong M^{\pm}({\cal{F}}(P))\otimes M^{\pm}({\cal{F}}(Q))$
as ${\comps}$-algebras.
\end{theorem}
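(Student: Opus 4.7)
The plan is to establish a ${\comps}$-algebra isomorphism by producing a natural surjection from the tensor product onto $M^{\pm}({\cal{F}}(P\times Q))$ and then exhibiting an explicit left-inverse. First I would exploit the identification $A_i\times B_j \cong (A_i\times \{\tilde{0}\}) + (\{\tilde{0}\}\times B_j)$ inside ${\reals}^{d+e}$, which turns every face of $P\times Q$ into a Minkowski sum in the ambient ring $M^{\pm}({\cal{P}}({\reals}^{d+e}))$. This gives a $\comps$-algebra surjection $\phi: {\comps}[x_1,\ldots,x_m,y_1,\ldots,y_n]^{\pm} \twoheadrightarrow M^{\pm}({\cal{F}}(P\times Q))$ with $\phi(x_i) = [A_i\times\{\tilde{0}\}]$ and $\phi(y_j) = [\{\tilde{0}\}\times B_j]$, so $M^{\pm}({\cal{F}}(P\times Q))\cong R/L$ where $R = {\comps}[x_1,\ldots,x_m,y_1,\ldots,y_n]^{\pm}$ and $L = \ker(\phi)$.

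Next, writing $M^{\pm}({\cal{F}}(P)) \cong R_1/I$ and $M^{\pm}({\cal{F}}(Q)) \cong R_2/J$ for the obvious polynomial rings $R_1, R_2$ and their ideals $I, J$, the canonical inclusions $I, J \hookrightarrow L$ (seen directly by evaluating $\phi$ on any generator) produce a surjection
\[
\tau: (R_1/I) \otimes_{\comps} (R_2/J) \twoheadrightarrow R/L.
\]
The entire problem reduces to showing $\tau$ is injective. My approach is to construct a candidate left-inverse $\psi$ sending $\bar{f} \mapsto \overline{f(\tilde{x},\tilde{1})}\otimes\overline{f(\tilde{1},\tilde{y})}$, whereupon the direct computation $(\psi\circ\tau)(\bar{x}^{\tilde{\alpha}}\otimes\bar{y}^{\tilde{\beta}}) = \bar{x}^{\tilde{\alpha}}\otimes\bar{y}^{\tilde{\beta}}$ immediately shows $\psi\circ\tau = \mathrm{id}$, forcing $\tau$ to be injective.

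The hard part — and the only step requiring real work — is showing that $\psi$ is well-defined, namely that $f\in L$ implies both $f(\tilde{x},\tilde{1})\in I$ and $f(\tilde{1},\tilde{y})\in J$. Here I would write $f = \sum_{i=1}^N c_i\tilde{x}^{\tilde{\alpha}_i}\tilde{y}^{\tilde{\beta}_i}$ and, for each fixed $\tilde{a}\in{\reals}^d$, form the slice
\[
f_{\tilde{a}}(\tilde{y}) = \sum_{i=1}^N c_i [\alpha_{i\/1}A_1+\cdots+\alpha_{i\/m}A_m](\tilde{a})\,\tilde{y}^{\tilde{\beta}_i}.
\]
The key observation is that the coefficients $[\alpha_{i\/1}A_1+\cdots+\alpha_{i\/m}A_m](\tilde{a})$ lie in $\{0,1\}\subseteq\ints$, so $f_{\tilde{a}}$ is an honest element of $R_2$; and because $f\in L$, the function $\tilde{b}\mapsto (\phi(f))(\tilde{a},\tilde{b})$ vanishes identically, which places $f_{\tilde{a}}\in J$. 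Applying Theorem~\ref{thm:eval-1} to $f_{\tilde{a}}$ yields $f_{\tilde{a}}(\tilde{1})=0$ for every $\tilde{a}$, and rewriting this equation as an identity of indicator functions on ${\reals}^d$ gives exactly $f(\tilde{x},\tilde{1})\in I$. The symmetric argument fixing $\tilde{b}\in{\reals}^e$ gives $f(\tilde{1},\tilde{y})\in J$, and the theorem follows.
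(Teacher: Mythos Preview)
Your approach is essentially identical to the paper's: same surjection $\tau$, same candidate left-inverse $\psi$ given by $\bar f\mapsto\overline{f(\tilde x,\tilde 1)}\otimes\overline{f(\tilde 1,\tilde y)}$, same slicing argument invoking Theorem~\ref{thm:eval-1}, same final check on monomial tensors.

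There is, however, a genuine gap in the argument as written (and the paper's presentation shares it). The formula for $\psi$ is not $\comps$-linear: with $f_1=x_1$ and $f_2=y_1$ it sends $f_1+f_2$ to $(\bar x_1+1)\otimes(1+\bar y_1)$, not to $\bar x_1\otimes 1+1\otimes\bar y_1$. Your well-definedness check therefore produces only a well-defined \emph{set map} $R/L\to(R_1/I)\otimes(R_2/J)$, not a homomorphism, so verifying $\psi\circ\tau=\mathrm{id}$ on monomial tensors does not propagate to arbitrary elements. If instead $\psi$ is meant to be the canonical quotient $R\cong R_1\otimes R_2\twoheadrightarrow(R_1/I)\otimes(R_2/J)$ (extended linearly from monomials), then the required well-definedness condition is $L\subseteq(I,J)$, and the two facts you verify---$f(\tilde x,\tilde 1)\in I$ and $f(\tilde 1,\tilde y)\in J$---do not imply this: the polynomial $(x_1-1)(y_1-1)$ passes both tests when $I=J=(0)$ yet is nonzero. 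A clean repair bypasses $\psi$ entirely. Since $R_1/I\cong M^{\pm}({\cal F}(P))\subseteq{\comps}^{\reals^d}$ and $R_2/J\cong M^{\pm}({\cal F}(Q))\subseteq{\comps}^{\reals^e}$ are honest spaces of functions, the composite $(R_1/I)\otimes(R_2/J)\xrightarrow{\ \tau\ }R/L\hookrightarrow{\comps}^{\reals^{d+e}}$ sends $\bar g\otimes\bar h$ to $(\tilde a,\tilde b)\mapsto\phi_P(g)(\tilde a)\,\phi_Q(h)(\tilde b)$, and injectivity of this composite is the standard fact that for subspaces $V\subseteq{\comps}^X$, $W\subseteq{\comps}^Y$ the map $V\otimes W\to{\comps}^{X\times Y}$ is injective. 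Hence $\tau$ is injective, and the appeal to Theorem~\ref{thm:eval-1} becomes unnecessary.
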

By Theorem~\ref{thm:Cart-isom} we get
Corollaries~\ref{cor:d-box} and~\ref{cor:d-box-pm} directly
without the use of any Gr\"{o}bner Bases reductions.
Also, since the simplest form of one dimensional simplex $\Delta_1$
is homeomorphic to the closed interval $[0,1]$, then
${\cal{F}}(\Delta_1) = \{\{0\}, \{1\}, [0,1]\}$ and so
$M^{\pm}({\cal{F}}(\Delta_1)) = M({\cal{B}}_1)$, the Minkowski ring
of the one dimensional box. From Corollary~\ref{cor:d-box-pm}
and Theorem~\ref{thm:Cart-isom} we then obtain the following.
\begin{corollary}
\label{cor:prism}
For a polytope $P\subseteq {\reals}^d$ the Minkowski ring of all the
nonempty faces of the prism $P\times \Delta_1$ is given by
\[
\begin{split}
M^{\pm}({\cal{F}}(P\times\Delta_1))
  & \cong M^{\pm}({\cal{F}}(P))\otimes M^{\pm}({\cal{F}}(\Delta_1)) \\
  & \cong M^{\pm}({\cal{F}}(P))\otimes M({\cal{B}}_1) \\
  & \cong M^{\pm}({\cal{F}}(P))\otimes {\comps}[x,y]^{\pm}/((y-1)(y-x))
\end{split}
\]
as ${\comps}$-algebras.  
\end{corollary}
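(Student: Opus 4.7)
The plan is to obtain the stated chain of three isomorphisms by directly invoking two results already established in the paper, namely Theorem~\ref{thm:Cart-isom} and Corollary~\ref{cor:d-box-pm}, together with the identification of the face ring of $\Delta_1$ with the ring of closed intervals in ${\cal{B}}_1$.

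First, I would apply Theorem~\ref{thm:Cart-isom} to the two polytopes $P\subseteq\reals^d$ and $Q = \Delta_1\subseteq\reals$. Since the theorem gives $M^{\pm}({\cal{F}}(P\times Q))\cong M^{\pm}({\cal{F}}(P))\otimes M^{\pm}({\cal{F}}(Q))$ as ${\comps}$-algebras with the tensor product taken over ${\comps}$, this produces the first isomorphism $M^{\pm}({\cal{F}}(P\times\Delta_1))\cong M^{\pm}({\cal{F}}(P))\otimes M^{\pm}({\cal{F}}(\Delta_1))$ verbatim.

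Second, I would identify $\Delta_1$ with the closed interval $[0,1]\subseteq\reals$, whose face collection is ${\cal{F}}(\Delta_1) = \{\{0\},\{1\},[0,1]\}$. Since $\{0\}$ is the multiplicative identity of the Minkowski ring and the remaining two indicator functions are precisely those used to generate $M({\cal{B}}_1)$ in the discussion preceding Corollary~\ref{cor:d-box-pm}, the ring $M^{\pm}({\cal{F}}(\Delta_1))$ coincides with $M({\cal{B}}_1)$; in particular, the displayed remark that $M^{\pm}({\cal{B}}_d^+) = M({\cal{B}}_d)$ specializes to show that the Laurent-inverse closure is already present in $M({\cal{B}}_1)$. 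This yields the second isomorphism.

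Finally, I would invoke Corollary~\ref{cor:d-box-pm} in the case $d=1$, which explicitly identifies $M({\cal{B}}_1)\cong {\comps}[x,y]^{\pm}/((y-1)(y-x))$. Substituting this into the tensor product gives the third isomorphism. There is no genuine obstacle in this proof; the only small point requiring care is confirming that the inverses appearing in the $\pm$-ring of ${\cal{F}}(\Delta_1)$ do indeed correspond to those already available in $M({\cal{B}}_1) \cong {\comps}[x,y]^{\pm}/((y-1)(y-x))$, which is immediate from the observation that $y$ (and $x$) are already units in the Laurent quotient. Thus the corollary follows by a direct concatenation of the two prior results, with no new Gr\"obner basis or geometric argument required.
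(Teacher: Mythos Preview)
Your proposal is correct and follows essentially the same route as the paper: apply Theorem~\ref{thm:Cart-isom} for the first isomorphism, identify $\Delta_1$ with $[0,1]$ so that $M^{\pm}({\cal{F}}(\Delta_1)) = M({\cal{B}}_1)$, and then invoke Corollary~\ref{cor:d-box-pm} with $d=1$ for the last isomorphism. Your added remark about the inverses already being present in the Laurent quotient is a harmless (and correct) elaboration of the paper's brief identification $M^{\pm}({\cal{F}}(\Delta_1)) = M({\cal{B}}_1)$.
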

{\sc Example:} We note that the points $O, A, B$ in
Section~\ref{sec:Coxeter} form a triangle and that the
Minkowski ring
$M^{\pm}({\cal{A}})
= M^{\pm}(\llbracket{A}\rrbracket, \llbracket{B}\rrbracket,
\llbracket{O}, {A}\rrbracket, \llbracket{O},{B}\rrbracket,
\llbracket{A},{B}\rrbracket, \llbracket{O},{A},{B}\rrbracket\})$
is the Minkowski ring of the simples form of the two
dimensional simplex $\Delta_2$ together with
all its faces, so $M^{\pm}({\cal{A}}) = M^{\pm}({\cal{F}}(\Delta_2))$.
By Theorems~\ref{thm:A} and~\ref{thm:Cart-isom} we then have
\[
\begin{split}
M^{\pm}({\cal{F}}(\Delta_1\times\Delta_2))
  & \cong M^{\pm}({\cal{F}}(\Delta_1))\otimes M^{\pm}({\cal{F}}(\Delta_2)) \\
  & \cong M({\cal{B}}_1)\otimes M^{\pm}({\cal{A}}) \\
  & \cong {\comps}[x,y,x_1,x_2,y_1,y_2,y_3,z]^{\pm}/I
\end{split}
\]
where where $I = ((y-1)(y-x), G_1, G_2)$ and $G_1$ and $G_2$ are as in
(\ref{eqn:dim-1}) and (\ref{eqn:dim-2}) respectively, that is
explicitly given by the following
\[
\begin{split}
I = \left(\right. & \left. (y-1)(y-x),\right. \\
       & \left.  (y_1 - 1)(y_1 - x_1), (y_2 - 1)(y_2 - x_2),
                (y_3 - x_1)(y_3 - x_2),\right. \\
       & \left.  (z-1)(z-y_3), (z-x_1)(z-y_2), (z-x_2)(z-y_1), \right. \\
       & \left.  (z-y_1)(z-y_2), (z-y_1)(z-y_3), (z-y_2)(z-y_3)\right.\left.\right).
\end{split}
\]

\section{Summary and questions}
\label{sec:summary}

We conclude this article by a brief summary of what has been done and
pose some relevant questions.

In Section~\ref{sec:defs-obs} we reviewed some useful properties
of Minkowski rings as ${\comps}[x_1,\ldots,x_n]/I$ and
related Laurent polynomial rings ${\comps}[x_1,\ldots,x_n]^{\pm}/I$.
Such rings capture all the algebraic relations stemming from a finite
collection of polytopes and their Minkowski sums.

Section~\ref{sec:simple} was dedicated to the explicit Minkowski
ring of a single closed interval together with its endvertices.
We showed that the Minkowski ring
$R = R_{\alpha,\beta} := M(\{\alpha\},\{\beta\},[\alpha,\beta])$
depends not only on the combinatorics of the interval $[\alpha,\beta]$
and its endpoint but also on its explicit embedding in the real line
and related linear dependences over ${\nats}$ or ${\ints}$
in Proposition~\ref{prp:4-rings} and Corollary~\ref{cor:3-rings}.
Further we proved that the types of rings in Proposition~\ref{prp:4-rings} 
are always non-isomorphic as rings. We then obtain some corollaries
for the Minkowski rings of $d$-dimensional boxes in ${\reals}^d$
in Corollaries~\ref{cor:d-box} and~\ref{cor:d-box-pm}.

Section~\ref{sec:Coxeter} was dedicated to establishing the Minkowski ring
$M^{\pm}({\cal{A}})$ of all closed polygonal sets in ${\reals}^2$
with vertices from the triangular grid
${\ints}(1,0) + {\ints}(\frac{1}{2},\frac{\sqrt{3}}{2})$ and where
each side is horizontal or has slope $\pm\sqrt{3}$. The
main result here is Theorem~\ref{thm:A} that describes completely
the algebraic structure of $M^{\pm}({\cal{A}})$. We then 
established some properties of general identities in Minkowski rings
$M({\cal{F}}(P))$ of a polytope and all its faces in
Corollary~\ref{cor:minimal} and Proposition~\ref{prp:indicator-iff}.

In the final Section~\ref{sec:Cartesian} we showed that the Minkowski
ring construction of polytopes behaves well under Cartesian products
as stated in Theorem~\ref{thm:Cart-isom}. A natural question is
therefore if Minkowski rings behave well under other constructions
as well. Recall the notion of a {\em pyramid} over a
polytope~\cite[p.~9]{Ziegler}.
\begin{question}
\label{qst:pyr}
If $P$ is a $d$-dimensional polytope and $\pyr(P)$ is the pyramid over
$P$, can the Minkowski ring $M^{\pm}({\cal{F}}(\pyr(P)))$ be described
in terms of $M^{\pm}({\cal{F}}(P))$, the Minkowski ring of $P$, in some
form? More casually, does the Minkowski ring construction behave well
under the pyramid construction?
\end{question}
One can view the pyramid construction as a special case of the
{\em fee join} $P*Q$ or just {\em join} of two polytopes
$P$ and $Q$~\cite[p.323]{Ziegler}.
\begin{question}
\label{qst:join}
For two polytopes $P$ and $Q$ can the Minkowski ring
$M^{\pm}({\cal{F}}(P*Q))$ of the free join of $P$ and $Q$ be
described in terms of the Minkowski rings
$M^{\pm}({\cal{F}}(P))$ and $M^{\pm}({\cal{F}}(Q))$?
\end{question}

\subsection*{Author Contribution:} The authors Geir and Jim jointly wrote
the manuscript text, both authors reviewed the manuscript.

\subsection*{Conflict of Interest:} None

\subsection*{Data Availability Statement:} None

\subsection*{Funding Declaration:} None

\subsection*{Acknowledgments}
Sincere thanks to ... 

\bibliographystyle{amsalpha}
\bibliography{Mr-arXiv}

\end{document}